\newtheorem{defn}{Definition}[section]
\newtheorem{thm}[defn]{Theorem}
\newtheorem{lem}[defn]{Lemma}
\newtheorem{cor}[defn]{Corollary}
\newtheorem{prop}[defn]{Proposition}
\newtheorem{rem}[defn]{Remark}
\newcommand{\N}{\mathbb{N}}
\newcommand{\R}{\mathbb{R}}
\newcommand{\C}{\mathbb{C}}
\newcommand{\Z}{\mathbb{Z}}
\newcommand{\Q}{\mathbb{Q}}
\newcommand \tr {{\mathrm{tr}}}
\newcommand \Conf {{\mathrm {Conf}}}
\begin{document}
\title[The Explicit formulae for the scaling limits]{The Explicit formulae for  scaling limits in the
ergodic decomposition of infinite Pickrell measures  }

\begin{abstract}
{The main result of this paper, Theorem \ref{main-result}, gives explicit formulae for the kernels  of the ergodic decomposition measures  for infinite Pickrell measures on spaces of infinite complex matrices. The kernels are obtained as the scaling limits of Christoffel-Uvarov deformations of Jacobi
orthogonal polynomial ensembles.
}\end{abstract}

\author{ Alexander I. Bufetov
,  Yanqi Qiu }

\address{A.B.:Institut de Math\'ematiques de Marseille, Aix-Marseille Universit{\'e}, CNRS, Marseille}
\address{Steklov Institute of Mathematics,
Moscow}
\address{Institute for Information Transmission Problems,
 Moscow}
\address{National Research University Higher School of Economics,
 Moscow}
\address{Rice University, Houston TX}

\address{Y. Q.: Institut de Math\'ematiques de Marseille,  Aix-Marseille Universit{\'e}, Marseille}

\maketitle

\section{Introduction.}
\subsection{Outline of the main results}
\subsubsection{Pickrell measures}
We start by recalling the definition of Pickrell measures \cite{Pickrell90}.
Our presentation follows \cite{Bufetov_inf-deter}.

Given a parameter $s \in \R$ and a natural number $n$,  consider 
a measure $\mu_n^{(s)}$ on the space $\text{Mat}(n, \C)$ of $n\times n$-complex matrices, given by the formula
\begin{equation}\label{pick-def}
{\mu}_n^{(s)}=\mathrm{const}_{n,s}\det(1+{z}^*{z})^{-2n-s}dz.
\end{equation}
Here $dz$ is the Lebesgue measure on the space of  matrices, and $\mathrm{const}_{n,s}$ a normalization constant whose choice will be explained later. Note that the measure $\mu_n^{(s)}$ is finite if $s>-1$ and infinite if $s\leq -1$.

If the constants $\mathrm{const}_{n,s}$ are chosen appropriately, then  the sequence of measures  (\ref{pick-def}) has
the Kolmogorov property of consistency  under natural projections:  the push-forward of the measure
${\mu}_{n+1}^{(s)}$ under the natural projection of cutting the $n\times n$-corner of a $(n+1)\times (n+1)$-matrix is precisely the  measure ${\mu}_n^{(s)}$. This consistency property  also holds for infinite  
measures provided $n$ is sufficiently large. The consistency property and the Kolmogorov Existence Theorem allows one  to define the Pickrell measure $\mu^{(s)}$ on the space of infinite complex matrices $\text{Mat}(\N, \C)$, which   is finite if $s>-1$ and infinite if $s\leq -1$.

Let $U(\infty)$ be the infinite unitary group $$U(\infty) = \bigcup_{n\in\N} U(n),$$ and let $G = U(\infty) \times U(\infty)$. Groups like $U(\infty)$ or $G$ are considered as nice ``big groups'', they are non-locally compact groups, but are the inductive limits of compact ones.  

The space $\text{Mat}(\N, \C)$ can be naturally considered as a $G$-space given by the action \begin{align*} T_{u_1, u_2}( z) = u_1 z u_2^*, \text{ for } (u_1, u_2) \in G, z \in \text{Mat}(\N, \C).\end{align*} By definition, the Pickrell measures are $G$-invariant. The ergodic decomposition of Pickrell measures with respect to $G$-action was studied in \cite{BO-infinite-matrices} in finite case and \cite{Bufetov_inf-deter} in infinite case. The ergodic $G$-invariant probability measures on $\text{Mat}(\N, \C)$ admit an explicit classification
due to Pickrell \cite{Pickrell90} and to which Olshanski and Vershik \cite{Olsh-Vershik} gave a different approach: let $\mathfrak{M}_{\text{erg}} (\text{Mat}(\N, \C))$ be the set of ergodic probability measures and define the Pickrell set by \begin{align*} \Omega_P = \left\{ \omega = (\gamma, x) : x = (x_1 \ge x_2 \ge \dots \ge x_i \ge \dots \ge 0), \sum_{i = 1}^\infty x_i  \le \gamma\right\},\end{align*} then there is a natural identification:  $$\begin{array}{ccc} \Omega_P & \leftrightarrow & \mathfrak{M}_{\text{erg}} (\text{Mat}(\N, \C))  \\ \omega & \leftrightarrow &  \eta_\omega\end{array}.$$

Set \begin{align*} \Omega_P^{0} : = \left\{ \omega = (\gamma, x) \in \Omega_P: x_i > 0 \text{ \, for all $i$, and \, } \gamma = \sum_{i = 1}^\infty x_i        \right\}. \end{align*}

The finite Pickrell measures $\mu^{(s)}$ admit the following unique ergodic decomposition \begin{align}\label{erg-dec} \mu^{(s)} = \int_{\Omega_P} \eta_\omega  d \overline{\mu}^{(s)} (\omega). \end{align} 
 Borodin and Olshanski \cite{BO-infinite-matrices} proved that the decomposition measures $\overline{\mu}^{(s)}$ live on $\Omega_P^0$, i.e., $\overline{\mu}^{(s)} (\Omega_P\setminus \Omega_P^0)  = 0 $. Let  $\mathbb{B}^{(s)}$ denote the push-forward of the following map: $$\begin{array}{cccc} \text{conf}: & \Omega_P^0 & \rightarrow & \text{Conf} (( 0,  \infty)) \\  & \omega  & \mapsto & \{ x_1, x_2, \dots, x_i, \dots \} \end{array}. $$ The above $\overline{\mu}^{(s)}$-almost sure bijection  identifies the decomposition measure $\overline{\mu}^{(s)}$ on $\Omega_P$ with the measure $\mathbb{B}^{(s)}$ on $\text{Conf}((0, \infty))$, for this reason, the measure $\mathbb{B}^{(s)}$ will also be called the decomposition measure of the Pickrell measure $\mu^{(s)}$. It is showed that $\mathbb{B}^{(s)}$ is a determinantal measure on $\text{Conf}((0, \infty))$ with correlation kernel \begin{align}\label{bessel-kernel-mod} J^{(s)} (x_1,x_2) =  \frac{1}{x_1x_2} \int_0^1 J_s\left(2\sqrt{\frac{t}{x_1}}\right) J_s\left(2\sqrt{\frac{t}{x_2}}\right)dt .\end{align}  The change of variable $y = 4/x$ reduces the kernel $J^{(s)}$ to the well-known kernel $\widetilde{J}^{(s)}$ of the Bessel point process of Tracy and Widom in \cite{Tracy-Widom94}: \begin{align*} \widetilde{J}^{(s)} (x_1, x_2) = \frac{1}{4} \int_0^1 J_s(\sqrt{tx_1}) J_s(\sqrt{tx_2}) dt .  \end{align*}

When $s \le -1$, the ergodic decomposition of the infinite Pickrell measure $\mu^{(s)}$ was described in \cite{Bufetov_inf-deter}, the decomposition formula takes the same form  as  \eqref{erg-dec}, while this time, the decomposition measure $\overline{\mu}^{(s)}$ is an infinite  measure on $\Omega_P$ and again, we have $\overline{\mu}^{(s)} (\Omega_P \setminus \Omega_P^0) = 0$. The $\overline{\mu}^{(s)}$-almost sure bijection $\omega \rightarrow \text{conf}(\omega)$ identifies $\overline{\mu}^{(s)}$ with an infinite determinantal measure  $\mathbb{B}^{(s)}$ on $\text{Conf} ((0,  \infty))$. One suitable way to describe $\mathbb{B}^{(s)}$ is via the  multiplicative functionals, for which we recall the definition: a multiplicative functional on $\text{Conf}((0,  \infty))$ is obtained by taking the product of the values of a fixed nonnegative function over all particles of a configuration: \begin{align*}  \Psi_g (X) = \prod_{x \in X} g(x) \text{ \, for any  $X \in \text{Conf}((0, \infty))$} . \end{align*} If the function $g: (0, \infty) \rightarrow (0, 1)$ is suitably chosen, then \begin{align} \label{deter-proba} \frac{ \Psi_g \mathbb{B}^{(s)} }{\int_{\textnormal{Conf} ( (0,  \infty) )} \Psi_g d  \mathbb{B}^{(s)}} \end{align} is a determinantal measure on $\text{Conf}((0,  \infty)) $ whose correlation kernel coincides with that of  an orthogonal projection $\Pi^g: L^2(0, \infty) \rightarrow L^2(0, \infty)$. Note that the range $\text{Ran} (\Pi^g)$ of this projection is explicitly given in \cite{Bufetov_inf-deter}.

However, even for simple $g$, the explicit formula for the kernel of $\Pi^g$ turns out to be non-trivial. Our aim in this paper is to give explicit formulae for the kernel of the operator $\Pi^g$ for suitable $g$.  The kernels are obtained as the scaling limits of the Christoffel-Darboux kernels associated to Christoffel-Uvarov deformation of Jacobi orthogonal polynomial ensembles.

\subsubsection{Formulation of the main result}

Let $f_1, \cdots, f_n$ be complex-valued functions on an interval admitting $n-1$ derivatives.  We write $W(f_1, \dots, f_n)$ for the Wronskian of $f_1, \dots, f_n$, which, we recall, is defined by the formula $$W(f_1, \cdots, f_n) (t) = \left| \begin{array}{cccc} f_1(t) & f_1'(t) & \cdots & f_1^{(n-1)}(t)  \\  f_2(t) & f_2'(t) & \cdots & f_2^{(n-1)}(t)  \\ \vdots & \vdots & \ddots & \vdots \\  f_n(t) & f_n'(t) & \cdots & f_n^{(n-1)}(t) \end{array} \right|. $$
For $s^{\prime}>-1$, we write  $$J_{s^{\prime}, y} (t) \stackrel{\mathrm{def}}{=} J_{s^{\prime}} (t \sqrt{y}), \quad K_{s^{\prime}, v_j} (t) \stackrel{\mathrm{def}}{=} K_{s^{\prime}} (t \sqrt{v_j}),$$ where
 $ J_{s^{\prime}}$ stands for the Bessel function, $K_{s^{\prime}}$ for the modified Bessel function of the second kind.
 The main  result of this paper is given by the following
\begin{thm}\label{main-result}
Let $s \le -1$ and let $m$ be any natural number  such that $ s + m > -1$. Assume that $v_1, \dots, v_m$ are distinct positive real numbers. Then for the function \begin{align}\label{main-g} g (x) = \prod_{j = 1}^m \frac{4/x}{4/x + v_j} = \prod_{j= 1}^m \frac{4}{4 + v_j x}, \end{align} the kernel $\Pi^g$  is given by the formula \begin{align*} \Pi^g(x, x') =  \frac{1}{2} \cdot \frac{\left| \begin{array}{cc} A^{(s + m, v)} (1, 4/x) & B^{(s + m, v)} (1, 4/ x)   \vspace{3mm}\\ A^{(s + m, v)} (1, 4/x') & B^{(s + m, v)} (1, 4/ x') \end{array} \right|}{  \prod_{j = 1}^m \sqrt{(v_j + 4/x) (v_j + 4/x')} \cdot [C^{(s+m, v )}(1)]^2 \cdot (x' - x)}, \end{align*} where \begin{align*} A^{(s+m , v)} (t, y) = W ( K_{s+m, v_1}, \dots, K_{s+m, v_m}, J_{s+m, y}) (t) , \end{align*} \begin{align*} B^{(s+m, v)} (t, y) =  \frac{\partial  A^{(s+m, v)}}{\partial t} (t, y) , \end{align*} \begin{align*} C^{(s+m, v)} (t) = W ( K_{s+m, v_1}, \dots, K_{s+m, v_m}) (t).\end{align*}

\end{thm}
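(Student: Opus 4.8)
The plan is to realize $\Pi^g$ as a hard-edge scaling limit of Christoffel--Darboux kernels of a Christoffel--Uvarov deformation of a Jacobi orthogonal polynomial ensemble, as announced in the introduction. The law of the eigenvalues of $z^{*}z$, where $z$ is distributed according to a finite Pickrell measure (at a parameter for which that measure is finite), is --- after the classical change of variable turning it into an ensemble on a bounded interval --- a Jacobi orthogonal polynomial ensemble. The first task is to identify, for $g$ as in \eqref{main-g} and $s+m>-1$, the normalized determinantal measure \eqref{deter-proba} attached to the infinite measure $\mathbb{B}^{(s)}$ as an $n\to\infty$ limit of such finite ensembles subjected to a Christoffel--Uvarov deformation: a Christoffel (polynomial-multiplication) part which, in the scaling limit, shifts the governing Bessel index from $s$ to $s+m$ --- note that $s+m>-1$ is exactly the condition ensuring $\int \Psi_g\, d\mathbb{B}^{(s)}<\infty$, since $g$ decays like the power $m$ at the relevant edge $x=\infty$ --- together with a Uvarov (division) part dividing the weight by the polynomial $\prod_{j=1}^m(y+v_j)$ in the variable $y=4/x$, whose roots $-v_j$ sit outside the support. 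Here I would draw on \cite{Bufetov_inf-deter}, where $\mathrm{Ran}(\Pi^g)$ is computed explicitly, to fix the dictionary between the $v_j$ and the deformation points.

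\textbf{Closed form at finite $n$.} For a Christoffel--Uvarov modification of a weight there are classical bordered-determinant (Casoratian) formulas: the modified orthogonal polynomials and the modified Christoffel--Darboux kernel are ratios of determinants whose entries are the undeformed Jacobi polynomials $P_k$, their derivatives at the confluent Christoffel point, and their functions of the second kind $Q_k$ evaluated at the exterior points $-v_j$. I would record the resulting kernel as a Christoffel--Darboux difference quotient $\big((\cdot)-(\cdot)\big)/(y-y')$ whose numerator is a $2\times 2$ minor of a fixed $(m+1)\times(m+1)$ determinant --- the first $m$ rows carrying the deformation data, the last two the running variables --- and whose normalizing constant is the analogous $m\times m$ determinant built from the $Q_k(-v_j)$ alone.

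\textbf{Scaling limit and bookkeeping.} Applying the hard-edge scaling (so that $x\sim(\mathrm{const})/n^2$, equivalently the bounded-interval variable $\sim(\mathrm{const})/n^{2}$ near the edge), one checks: (i) the rescaled Jacobi polynomials converge to Bessel functions, the Christoffel part raising the index to $s+m$, so that the undeformed part of the kernel tends to the Bessel/Tracy--Widom kernel of index $s+m$; (ii) the functions of the second kind $Q_k(-v_j)$, being the subdominant solutions, converge to the modified Bessel functions $K_{s+m}(t\sqrt{v_j})$, the natural solution of the Bessel equation decaying at an exterior point; (iii) the discrete index $k\le n$ turns into the continuous variable $t\in(0,1)$ of the integral representation \eqref{bessel-kernel-mod}, so that the finite-$n$ Casoratian in the $v_j$ together with the running-variable column assembles into the Wronskian $A^{(s+m,v)}(t,y)=W(K_{s+m,v_1},\dots,K_{s+m,v_m},J_{s+m,y})(t)$ evaluated at the hard edge $t=1$, the Christoffel--Darboux difference quotient becomes the confluent combination $A^{(s+m,v)}(1,4/x)B^{(s+m,v)}(1,4/x')-B^{(s+m,v)}(1,4/x)A^{(s+m,v)}(1,4/x')$ with $B=\partial_t A$, and the normalization collapses to $[C^{(s+m,v)}(1)]^2$ with $C^{(s+m,v)}=W(K_{s+m,v_1},\dots,K_{s+m,v_m})$. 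Finally, passing from the unnormalized $\Psi_g\mathbb{B}^{(s)}$ to the genuine determinantal probability measure \eqref{deter-proba}, whose correlation kernel is the orthogonal projection $\Pi^g$, conjugates the kernel by $\sqrt{g(x)g(x')}=\prod_{j=1}^m \frac{\sqrt{4/x}\,\sqrt{4/x'}}{\sqrt{(v_j+4/x)(v_j+4/x')}}$; absorbing the numerator factors into the Wronskians and the constant, and the Jacobian of $y=4/x$ into the difference quotient, leaves precisely the denominator $\prod_{j=1}^m\sqrt{(v_j+4/x)(v_j+4/x')}\cdot[C^{(s+m,v)}(1)]^2\cdot(x'-x)$ and the overall factor $\tfrac12$. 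Collecting terms yields the stated formula.

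\textbf{Main obstacle.} The technical crux is step (iii): proving that, after the hard-edge rescaling, the functions of the second kind of the Jacobi polynomials evaluated at the exterior points $-v_j$ converge --- uniformly on compact sets and with the correct normalization --- to $K_{s+m}(t\sqrt{v_j})$, and that both the Casoratian/Wronskian structure and the Christoffel--Darboux difference quotient survive the limit in a sufficiently controlled way (uniform convergence of the kernels on compact subsets of $(0,\infty)^2$, together with an integrable domination) to force convergence of the associated determinantal point processes. One must also match the limiting kernel with the projection $\Pi^g$ already produced in \cite{Bufetov_inf-deter}: here the explicit description of $\mathrm{Ran}(\Pi^g)$ given there offers an independent verification, by checking directly that the kernel in the statement is the reproducing kernel of the correct closed subspace of $L^2(0,\infty)$ --- indeed this gives an alternative, more computational route to the theorem that bypasses the scaling analysis altogether.
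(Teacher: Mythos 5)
Your proposal follows essentially the same route as the paper: the kernel is obtained as the hard-edge scaling limit of the Christoffel--Darboux kernels for the Uvarov-deformed Jacobi weight $(1-u)^{s+m}/\prod_j\bigl(1+\tfrac{v_j}{2n^2}-u\bigr)$ (the Christoffel factor $(1-u)^m$ being absorbed into the parameter shift $s\mapsto s+m$), with the functions of the second kind at the near-edge points producing the $K_{s+m,v_j}$, the finite differences in the degree index producing the Wronskian structure in $t$, and the weak convergence of the deformed ensembles to $\Psi_g\mathbb{B}^{(s)}/\int\Psi_g\,d\mathbb{B}^{(s)}$ handled separately. You also correctly isolate the technical crux — the Mehler--Heine-type asymptotics $Q_n\to K$ at points scaling into the edge and the uniform-integrability argument needed to pass the determinantal structure to the limit — which is precisely what Sections 2--4 of the paper carry out.
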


\begin{rem}
When $s > -1$,  the above theorem still holds for any $m \ge 1$. In this case, for the same $g$ as given in \eqref{main-g},  by results of \cite{Bufetov_inf-deter}, the kernel $\Pi^g$  obtained above is  the kernel for the operator of othogonal projection from $L_2(\R_{+}, \text{Leb})$ onto the subspace $ \sqrt{ g}  \mathrm{Ran}  J^{(s)}$ (here, with a slight abuse of notation, we let $J^{(s)}$ be the operator of orthogonal projection with kernel given in \eqref{bessel-kernel-mod}). Even in this case, however, the only way we can derive the explicit formula, given above,  for the kernel $\Pi^g$  is by using the method of scaling limits.
\end{rem}

\subsection{Organization of the paper}
The remainder of the paper is organized as follows.
Section 2 is devoted to some preliminary Mehler-Heine type asymptotics for Jacobi polynomials, 
these asymptotics will be used in the explicit calculations of the scaling limits in section 4.

In Section 3, we show that, for  three kinds of auxiliary functions $g$,  the scaling limits of the Christoffel-Darboux kernels for the Christoffel-Uvarov deformations of Jacobi orthogonal polynomial ensembles coincide with the kernels $\Pi^g$ which generate the determinantal probability given in \eqref{deter-proba}.

In section 4, we continue the study of the three kinds of auxiliary functions $g$. In case I, we illustrate how we calculate the scaling limits, the obtained scaling limits are the kernels for the determinantal process which are deformations of the Bessel point process of Tracy and Widom. The main formulae in Theorem \ref{main-result} will follow from the formulae obtained in case II, given in Theorem \ref{thm-case2-1} after change of variables $ z \rightarrow 4/x$.

{\bf {Acknowledgements.}}
Grigori Olshanski posed the problem to us, and we are greatly indebted to him.
We are deeply grateful to Alexei M. Borodin, who suggested to use the Christoffel-Uvarov deformations
of Jacobi orthogonal polynomial ensembles. We are deeply grateful to Alexei Klimenko for useful discussions.

The authors are supported by A*MIDEX project (No. ANR-11-IDEX-0001-02), financed by Programme ``Investissements d'Avenir'' of the Government of
the French Republic managed by the French National Research Agency (ANR).
A. B. is also supported in part
by the Grant MD-2859.2014.1 of the President of the Russian Federation,
by the Programme ``Dynamical systems and mathematical control theory''
of the Presidium of the Russian Academy of Sciences, by the ANR
 under the project "VALET'' of the Programme JCJC SIMI 1,
and by the
RFBR grants 11-01-00654, 12-01-31284, 12-01-33020, 13-01-12449 .
Y. Q. is supported in part by the ANR grant 2011-BS01-00801.

\section{Preliminary asymptotic formulae.}
\subsection{Notation} If $A, B$ are two quantities depending on the same variables, we write $A\asymp B$ if there exist two absolute values $c_1, c_2 > 0$ such that $c_1 \le \left| \frac{A}{B}\right| \le c_2$.  When $A$ and $B$ positive quantities, we write $A\lesssim  B$, if there exists an absolute value $c> 0$ such that $ A  \le c  B$.

For $\alpha, \beta> -1$, we denote the Jacobi weight on $(-1, 1)$ by  $$w_{\alpha, \beta}(t) = (1- t)^\alpha ( 1 + t)^\beta.$$  The associated Jacobi polynomials are denoted by $P_n^{(\alpha, \beta)}$. The leading coefficient of $P_n^{(\alpha, \beta)}$ is denoted by $k_n^{(\alpha, \beta)}$ and $h_n^{(\alpha, \beta)} : =\int [P_n^{(\alpha, \beta)} (t) ]^2 w_{\alpha, \beta}(t) dt $. When $\alpha = s, \beta = 0$, we will always omit $\beta$ in the notation: so $w_{s, 0}$ will be denoted by $w_s$,  $P_n^{(s, 0)}$ will be denoted by $P_n^{(s)}$ and the quantity $\Delta_{Q, n}^{(s, 0; \ell)}$ defined in the sequel will be denoted by  $\Delta_{Q, n}^{(s ; \ell)}$, etc.

  Given a sequence  $(f^{(\alpha, \beta)}_n)_{n = 0}^\infty$  of functions depending on $\alpha, \beta$, we define the differences of the sequence by $$ \Delta_{f, n}^{(\alpha, \beta; \,0)} : = f_n^{(\alpha, \beta)}, \quad \text{ and  for } \ell \ge 0, \Delta_{f, n}^{(\alpha, \beta; \,\ell + 1) } : = \Delta_{f, n+1}^{(\alpha, \beta; \,\ell) } - \Delta_{f, n}^{(\alpha, \beta; \,\ell)} . $$ By convention, we set $\Delta_{f, n}^{(\alpha, \beta; \, -1)} \equiv 0.$

In what follows, $\kappa_n$  always stands for a sequence of natural numbers such that $$\lim_{n \to \infty} \frac{\kappa_n}{n} = \kappa > 0. $$ Typical such sequences are given by $\kappa_n = \lfloor \kappa n \rfloor. $

\subsection{Asymptotics for Higher Differences of Jacobi Polynomials.}
In this section, we establish some asymptotic formulae for higher differences of Jacobi polynomials $\Delta_{P, n}^{(\alpha, \beta; \,\ell)} .$

\begin{lem}
For $\ell \ge 0$ and $n \ge 1$, we have
\begin{align}\label{recursion1} \begin{split} (n+1) \Delta_{P, n}^{( \alpha, \beta; \,\ell+1)} (x)  + \ell \Delta_{P, n+1}^{(\alpha, \beta; \,\ell)} (x)  + \ell (1-x) \Delta_{P, n+1}^{( \alpha+1, \beta; \ell-1 )} (x)  \\ + (n + \frac{\alpha + \beta}{2} + 1) (1-x) \Delta_{P, n}^{( \alpha + 1, \beta; \ell)} (x)= \alpha \Delta_{P, n}^{ (\alpha, \beta; \,\ell)} (x).
\end{split}
\end{align}
\end{lem}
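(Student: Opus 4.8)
The plan is to establish the identity by induction on $\ell$, starting from classical first-order structural relations for Jacobi polynomials and then propagating them through the difference operator $\Delta$. For the base cases $\ell = 0$ and $\ell = 1$, I would assemble the identity from standard facts: the three-term recurrence relation for $P_n^{(\alpha,\beta)}$, the differentiation-type formula expressing $(1-x)P_n^{(\alpha+1,\beta)}$ as a combination of $P_n^{(\alpha,\beta)}$ and $P_{n+1}^{(\alpha,\beta)}$ (equivalently $P_{n-1}^{(\alpha,\beta)}$), and the contiguous relation linking $P_n^{(\alpha,\beta)}$, $P_{n+1}^{(\alpha,\beta)}$ and $P_n^{(\alpha+1,\beta)}$. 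Concretely, for $\ell = 0$ the claimed identity reads $(n+1)\Delta_{P,n}^{(\alpha,\beta;1)}(x) + (n + \tfrac{\alpha+\beta}{2}+1)(1-x)P_n^{(\alpha+1,\beta)}(x) = \alpha P_n^{(\alpha,\beta)}(x)$, i.e. $(n+1)(P_{n+1}^{(\alpha,\beta)} - P_n^{(\alpha,\beta)}) + (n+\tfrac{\alpha+\beta}{2}+1)(1-x)P_n^{(\alpha+1,\beta)} = \alpha P_n^{(\alpha,\beta)}$, which is exactly one of the classical mixed contiguous relations (it can be read off from Szeg\H{o}'s book, e.g. the relations of the form $(2n+\alpha+\beta+\cdots)$ combining a shift in $n$ with a shift in $\alpha$); I would cite it and verify the normalization of constants directly.

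For the inductive step, assume the identity \eqref{recursion1} holds for a given $\ell \ge 0$, with $n$ arbitrary (this is essential: the induction must be uniform in $n$). Write the identity at level $\ell$ both at index $n$ and at index $n+1$, and subtract. The difference operator $\Delta$ commutes with the formation of each term $\Delta_{P,n}^{(\cdot;\,\ell)}$, so the terms $\Delta_{P,n}^{(\alpha,\beta;\ell+1)}$, $\Delta_{P,n+1}^{(\alpha,\beta;\ell)}$, $(1-x)\Delta_{P,n+1}^{(\alpha+1,\beta;\ell-1)}$, $(1-x)\Delta_{P,n}^{(\alpha+1,\beta;\ell)}$ and $\Delta_{P,n}^{(\alpha,\beta;\ell)}$ each advance by one order in $\ell$ upon subtraction. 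The only subtlety is the $n$-dependent coefficients $(n+1)$, $\ell$, $(n+\tfrac{\alpha+\beta}{2}+1)$: subtracting the $\ell$-level relation at $n+1$ from that at $n$ produces, via the discrete Leibniz rule $\Delta(a_n b_n) = a_{n+1}\Delta b_n + (\Delta a_n) b_n$, extra lower-order terms coming from the shifts of these coefficients (each shift contributes a constant times a term already present at level $\ell$). I would collect these extra contributions and check that they combine to reproduce exactly the coefficients $(n+1)$, $\ell+1$, $\ell+1$ appearing in \eqref{recursion1} at level $\ell+1$; the convention $\Delta_{f,n}^{(\cdot;-1)} \equiv 0$ handles the boundary term correctly when $\ell = 0$.

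The main obstacle I anticipate is purely bookkeeping: tracking the constants produced by the discrete product rule acting on the linear-in-$n$ coefficients, and confirming that the coefficient of $\Delta_{P,n+1}^{(\alpha+1,\beta;\ell-1)}$ in the shifted relation combines correctly with the coefficient of $(1-x)\Delta_{P,n}^{(\alpha+1,\beta;\ell)}$ (which itself splits under $\Delta$ into an $(\alpha+1,\beta;\ell+1)$ piece and an $(\alpha+1,\beta;\ell)$ piece) to give the stated coefficients $\ell+1$ on both $\Delta_{P,n+1}^{(\alpha+1,\beta;\ell)}$ and $(1-x)\Delta_{P,n+1}^{(\alpha+1,\beta;\ell-1)}$-type terms after reindexing. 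There is no conceptual difficulty — everything reduces to the classical contiguous/recurrence relations plus Abel-type summation by parts — but the constant-chasing has to be done carefully, ideally by writing $n+1 = (n+2) - 1$ etc. and grouping systematically.
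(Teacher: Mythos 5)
Your proposal is correct and follows essentially the same route as the paper: the base case $\ell=0$ is exactly Szeg\H{o}'s contiguous relation (4.5.4), and the inductive step is precisely the paper's ``take the level-$\ell$ identity at $n$ and at $n+1$, subtract, and track the shifts of the linear-in-$n$ coefficients via the discrete product rule,'' with the convention $\Delta^{(\cdot;-1)}\equiv 0$ absorbing the boundary case. The constant-chasing you defer does close up exactly as you predict (each shifted coefficient contributes one extra copy of the level-$\ell$ term, turning $\ell$ into $\ell+1$), so there is no gap.
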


\begin{proof}
When $\ell = 0$, identity \eqref{recursion1} is reduced to known formula (cf. \cite[4.5.4]{Szego-OP}): \begin{align}\label{recursion-difference} \begin{split} & (n + \frac{\alpha + \beta}{2} + 1)(1-x) P_n^{(\alpha + 1, \beta)} (x) \\ = & (n + 1) (P_n^{(\alpha, \beta)}(x) - P_{n+1}^{(\alpha, \beta)}(x) )  + \alpha P_n^{(\alpha, \beta)} (x).
\end{split} \end{align} Now assume that identity \eqref{recursion1} holds for an integer $\ell$ and for all $n\ge1$.  In particular, substituting $n+1$ for $n$,  we have \begin{align}\label{n+1}  \begin{split} & (n+2) \Delta_{P, n+1}^{( \alpha, \beta; \,\ell+1)} (x)  + \ell \Delta_{P, n+2}^{( \alpha, \beta; \,\ell )}   (x)+ \ell (1-x) \Delta_{P, n+2}^{( \alpha+1, \beta; \ell-1)} (x)   \\ & +  (n + \frac{\alpha + \beta}{2} + 2) (1-x) \Delta_{P, n+1}^{( \alpha + 1, \beta; \ell) } (x)= \alpha \Delta_{P, n+1}^{( \alpha, \beta; \,\ell )}(x). \end{split} \end{align} Then (\ref{n+1}) $-$ (\ref{recursion1}) yields that \begin{align*} & (n+1) \Delta_{P, n}^{ (\alpha, \beta; \,\ell+2)} (x)  + (\ell+1) \Delta_{P, n+1}^{( \alpha, \beta; \,\ell+1)} (x)  + (\ell +1)(1-x) \Delta_{P, n+1}^{( \alpha+1, \beta; \ell)} (x)    \\ & + (n + \frac{\alpha + \beta}{2} + 1) (1-x) \Delta_{P, n}^{( \alpha + 1, \beta; \ell + 1)} (x)= \alpha \Delta_{P, n}^{ (\alpha, \beta; \,\ell+1)} (x). \end{align*}
 Thus \eqref{recursion1} holds for $\ell + 1$ and all $n \ge 1$. By induction, identity \eqref{recursion1} holds for all $\ell \ge 0$ and all $n \ge 1$.
 \end{proof}

The classical Mehler-Heine theorem (\cite[p.192]{Szego-OP}) says that for $z \in \C\setminus \{ 0\}$,  \begin{eqnarray} \label{MH}\lim_{n \to \infty} n^{-\alpha} P_n^{(\alpha, \beta)} \Big(1- \frac{z}{2 n^2} \Big) = 2^{\alpha} z^{-\frac{\alpha}{2}} J_\alpha (\sqrt{z}).\end{eqnarray} This formula holds uniformly for $z$ in a simply connected compact subset of $ \C\setminus \{0\}$.

Applying the above asymptotics, we have
\begin{prop}\label{jacobi-asymp}
In the regime $ x^{(n)} = 1 - \frac{z}{2 n^2},$ for $\ell \ge 0$, we have
\begin{eqnarray}\label{asymp-gen}\lim_{n \to \infty} n^{\ell-\alpha} \Delta_{P, \kappa_n}^{( \alpha, \beta; \,\ell)} (x^{(n)}) = 2^\alpha z^{\frac{\ell-\alpha}{2}} J_\alpha^{(\ell)} (\kappa \sqrt{z}).\end{eqnarray} The formula holds uniformly in $\kappa$ and $z$ as long as $\kappa$ ranges in a compact subset of $(0, \infty)$ and $z$ ranges in a compact simply connected subset of $\C\setminus\{0\}$.
\end{prop}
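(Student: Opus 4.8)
The plan is to prove Proposition~\ref{jacobi-asymp} by induction on $\ell$. The base case $\ell = 0$ is the classical Mehler--Heine theorem \eqref{MH}, up to passing from the index $n$ to the index $\kappa_n$: I would write $x^{(n)} = 1 - \frac{z}{2n^2} = 1 - \frac{w_n}{2\kappa_n^2}$ with $w_n := z(\kappa_n/n)^2 \to \kappa^2 z$, apply \eqref{MH} at index $\kappa_n$ to the point $x^{(n)}$, and then multiply by $(\kappa_n/n)^\alpha \to \kappa^\alpha$ to turn $\kappa_n^{-\alpha}$ into $n^{-\alpha}$; the product collapses to $2^\alpha z^{-\alpha/2} J_\alpha(\kappa\sqrt z)$, which is \eqref{asymp-gen} for $\ell = 0$. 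Uniformity is inherited from \eqref{MH} because the $w_n$ stay in a fixed compact simply connected subset of $\C\setminus\{0\}$ when $z$ and $\kappa$ range over compacts. Throughout, $\beta$ is merely a spectator parameter, since both \eqref{MH} and \eqref{recursion1} hold for every $\beta > -1$.

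For the inductive step, assuming \eqref{asymp-gen} for all orders $\le \ell$ and all admissible $\alpha$, I would solve the recursion \eqref{recursion1} for the top difference,
\begin{align*}
\Delta_{P,n}^{(\alpha,\beta;\,\ell+1)}(x) = \frac{1}{n+1}\Big[ & \alpha \Delta_{P,n}^{(\alpha,\beta;\,\ell)}(x) - \ell \Delta_{P,n+1}^{(\alpha,\beta;\,\ell)}(x) \\ & - \ell(1-x)\Delta_{P,n+1}^{(\alpha+1,\beta;\,\ell-1)}(x) - \Big(n+\tfrac{\alpha+\beta}{2}+1\Big)(1-x)\Delta_{P,n}^{(\alpha+1,\beta;\,\ell)}(x)\Big],
\end{align*}
then substitute $n = \kappa_n$ and $x = x^{(n)}$ (so that $1-x^{(n)} = z/(2n^2)$), and multiply by $n^{\ell+1-\alpha}$. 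Writing $\frac{n^{\ell+1-\alpha}}{\kappa_n+1} = \frac{n}{\kappa_n+1}\cdot n^{\ell-\alpha}$ with $\frac{n}{\kappa_n+1}\to\frac1\kappa$, a term-by-term power count — in which each factor $1-x^{(n)}$ contributes $n^{-2}$ and each factor $\kappa_n\sim\kappa n$ contributes $n$ — shows that $n^{\ell-\alpha}$ times the bracket has a limit, evaluated from the inductive hypothesis at order $\ell$ (applied to the two sequences $\kappa_n$ and $\kappa_n+1$, which share the limiting slope $\kappa$) and at order $\ell-1$ (with parameter $\alpha+1$). Collecting the four contributions should give
\begin{align*}
\lim_{n\to\infty} n^{\ell+1-\alpha}\Delta_{P,\kappa_n}^{(\alpha,\beta;\,\ell+1)}(x^{(n)}) & = \frac{2^\alpha z^{(\ell-\alpha)/2}}{\kappa}\Big[(\alpha-\ell)J_\alpha^{(\ell)}(\kappa\sqrt z) \\ & \qquad - \ell J_{\alpha+1}^{(\ell-1)}(\kappa\sqrt z) - \kappa\sqrt z\, J_{\alpha+1}^{(\ell)}(\kappa\sqrt z)\Big].
\end{align*}

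To close the induction I would check that this equals $2^\alpha z^{(\ell+1-\alpha)/2}J_\alpha^{(\ell+1)}(\kappa\sqrt z)$, which, after the substitution $t = \kappa\sqrt z$, is exactly the identity
\begin{align*}
t J_\alpha^{(\ell+1)}(t) + t J_{\alpha+1}^{(\ell)}(t) + \ell J_{\alpha+1}^{(\ell-1)}(t) = (\alpha-\ell)J_\alpha^{(\ell)}(t).
\end{align*}
This is nothing but the $\ell$-th derivative, via the Leibniz rule, of the standard Bessel recurrence $t J_\alpha'(t) + t J_{\alpha+1}(t) = \alpha J_\alpha(t)$ (equivalently $J_\alpha' = -J_{\alpha+1} + \tfrac{\alpha}{t}J_\alpha$), where only two terms survive in each Leibniz expansion because $t\mapsto t$ has vanishing second derivative. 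Uniformity propagates at every step: \eqref{recursion1} involves a fixed finite number of terms, the coefficients $\tfrac{1}{\kappa_n+1}$ and $\kappa_n+\tfrac{\alpha+\beta}{2}+1$ are uniformly comparable to their limits for $\kappa$ in a compact subset of $(0,\infty)$, and the Bessel functions and their derivatives are analytic, hence uniformly continuous on the relevant compacts.

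The main obstacle I anticipate is bookkeeping rather than anything conceptual: one has to carry the four terms of \eqref{recursion1} together with all the powers of $n$ and $z$ and arrange that the limit comes out clean, then recognize the limiting relation as a differentiated Bessel recurrence. The one genuinely delicate point is that the inductive hypothesis must be invoked for the \emph{shifted} index sequence $\kappa_n+1$, not only for $\kappa_n$; this is legitimate precisely because the proposition is stated for an arbitrary sequence $\kappa_n$ with $\kappa_n/n\to\kappa$, and $(\kappa_n+1)/n$ also tends to $\kappa$.
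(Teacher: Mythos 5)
Your proposal is correct and follows essentially the same route as the paper: induction on $\ell$ driven by the recursion \eqref{recursion1}, with the limit of the four terms recombined via the differentiated Bessel recurrence \eqref{bessel-der}. The only differences are cosmetic — you spell out the reindexing in the Mehler--Heine base case and the use of the shifted sequence $\kappa_n+1$ more explicitly, and you obtain \eqref{bessel-der} by a Leibniz expansion of $tJ_\alpha'+tJ_{\alpha+1}=\alpha J_\alpha$ rather than by the paper's induction, which is a slightly slicker justification of the same identity.
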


\begin{proof}
When $\ell = 0$, identity (\ref{asymp-gen}) is readily reduced to the Mehler-Heine asymptotic formula \eqref{MH} and the uniform convergence. Now assume identity \eqref{asymp-gen} holds for $0, 1, \cdots, \ell$, then by \eqref{recursion1}, we have \begin{flalign} \label{induction} & \lim_{n \to \infty} n^{\ell + 1 - \alpha} \Delta_{P, k_n}^{ (\alpha, \beta; \,\ell+1)} (x^{(n)}) \\  = &  - \frac{\ell}{\kappa} \cdot 2^{\alpha} z^{\frac{\ell - \alpha}{2}}J_\alpha^{(\ell)} (\kappa \sqrt{z}) - \frac{\ell}{\kappa} \cdot \frac{z}{2} 2^{\alpha + 1} z^{\frac{\ell-1 - (\alpha+1)}{2}} J_{\alpha + 1}^{(\ell-1)} (\kappa \sqrt{z}) \nonumber  \\ & - \frac{z}{2} 2^{\alpha + 1} z^{\frac{\ell- (\alpha +1)}{2}} J_{\alpha + 1} ^{(\ell)} ( \kappa \sqrt{z}) + \frac{\alpha}{\kappa} 2^\alpha z^{\frac{\ell- \alpha}{2}} J_\alpha^{(\ell)}(\kappa \sqrt{z}) \nonumber \\ = & 2^\alpha z^{\frac{\ell + 1 - \alpha}{2}} \Big[  - \ell \cdot \frac{J_\alpha^{(\ell)}(\kappa \sqrt{z})}{ \kappa \sqrt{z}} - \ell \cdot \frac{J_{\alpha+1}^{(\ell - 1)}(\kappa \sqrt{z})}{ \kappa \sqrt{z}} - J_{\alpha + 1}^{(\ell)} (\kappa \sqrt{z}) + \alpha \frac{J_\alpha^{(\ell)}(\kappa\sqrt{z})}{\kappa \sqrt{z}}\Big]. \nonumber\end{flalign}

From the known recurrence relation (cf. \cite[9.1.27]{Ab})\begin{align} \label{differential-relation-J} J_\alpha'(z) = - J_{\alpha  + 1} (z)  + \frac{\alpha}{z} J_\alpha(z),\end{align} by induction on $\ell$, one readily sees that,  for all $\ell \ge 1$, \begin{eqnarray} \label{bessel-der} z \Big[  J_\alpha^{(\ell + 1)} (z) + J_{\alpha  + 1}^{(\ell )}(z)\Big] = (\alpha - \ell ) J_\alpha^{(\ell)} (z) - \ell J_{\alpha+1}^{(\ell-1)}(z). \end{eqnarray} Identity \eqref{asymp-gen} for $\ell + 1$ follows from \eqref{induction} and \eqref{bessel-der}, thus the proposition is proved by induction on $\ell$.
\end{proof}

We will also need the asymptotics for the derivative of the differences of Jacobi polynomials. The derivative of the Jacobi polynomials can be expressed in Jacobi polynomials with different parameters, more precisely, we have \begin{align}\label{jacobi-der} \dot{P}_n^{(\alpha, \beta)} (t)  = \frac{d}{dt}\Big\{P_n^{(\alpha, \beta)} \Big\} (t) = \frac{1}{2} ( n + \alpha + \beta +1) P_{n - 1}^{(\alpha + 1, \beta +1)} (t).\end{align}Using this relation, we have

\begin{prop}\label{der-asymp}
In the regime $ x^{(n)} = 1 - \frac{z}{2 n^2},$ for $\ell \ge 0$, we have \begin{align*} \lim_{n \to \infty} n^{-2 + \ell - \alpha} \dot{\Delta}_{P,\kappa_n}^{ (\alpha, \beta; \,\ell)} (x^{(n)}) =  2^\alpha z^{\frac{-2 + \ell -\alpha}{2}} \widetilde{J}_{\alpha+1}^{(\ell)}( \kappa \sqrt{z}),\end{align*} where $\widetilde{J}_{\alpha + 1}(t) : =  t J_{\alpha +1}(t)$. The formula holds uniformly in $\kappa$ and $z$ as long as $\kappa$ ranges in a compact subset of $(0, \infty)$ and $z$ ranges in a compact simply connected subset of $\C\setminus\{0\}$.
\end{prop}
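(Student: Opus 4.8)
The plan is to mimic the proof of Proposition~\ref{jacobi-asymp}, but now tracking the extra derivative in~$t$. First I would use the differentiation formula \eqref{jacobi-der}, which commutes with the difference operator $\Delta$ in the index $n$ (since \eqref{jacobi-der} is linear in $n$ up to the shift $P_n \mapsto P_{n-1}^{(\alpha+1,\beta+1)}$, the operators $\frac{d}{dt}$ and $\Delta_{\cdot,n}^{(\alpha,\beta;\ell)}$ essentially interchange, modulo a reindexing $\kappa_n \mapsto \kappa_n$, which is harmless in the limit since $\kappa_n/n \to \kappa$ and $(\kappa_n-1)/n \to \kappa$ as well). Concretely, $\dot\Delta_{P,\kappa_n}^{(\alpha,\beta;\ell)}(t)$ should be rewritten, via \eqref{jacobi-der} and the fact that $n+\alpha+\beta+1 = \kappa_n(1+o(1))\cdot \frac{1}{\kappa}\cdot\kappa + \dots$ — more carefully, $n + \alpha+\beta+1 \sim \kappa_n$ when the index is $\kappa_n$ — as roughly $\frac{1}{2}\kappa_n \Delta_{P,\kappa_n-1}^{(\alpha+1,\beta+1;\ell)}(t)$ up to lower-order corrections coming from the $\ell$-fold difference hitting the prefactor $n+\alpha+\beta+1$ (a discrete Leibniz rule produces an additional term with one fewer difference, which will be of lower order in $n$).

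Next I would apply Proposition~\ref{jacobi-asymp} with the shifted parameters $(\alpha+1,\beta+1)$ in place of $(\alpha,\beta)$: in the regime $x^{(n)} = 1 - z/(2n^2)$ we get
\begin{align*}
\lim_{n\to\infty} n^{\ell - (\alpha+1)} \Delta_{P,\kappa_n - 1}^{(\alpha+1,\beta+1;\ell)}(x^{(n)}) = 2^{\alpha+1} z^{\frac{\ell-(\alpha+1)}{2}} J_{\alpha+1}^{(\ell)}(\kappa\sqrt z).
\end{align*}
Multiplying by the prefactor $\frac12 \kappa_n \sim \frac12 \kappa n$ contributes one extra power of $n$ and a factor $\kappa/2$, so $n^{\ell-\alpha-1}\cdot n = n^{\ell-\alpha}$ times $\dot\Delta$ would tend to $\frac{\kappa}{2}\cdot 2^{\alpha+1} z^{\frac{\ell-\alpha-1}{2}} J_{\alpha+1}^{(\ell)}(\kappa\sqrt z)$. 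But the claimed normalization in the proposition is $n^{-2+\ell-\alpha}$, i.e.\ two fewer powers of $n$ than what I just described — this discrepancy is exactly accounted for by the chain rule: $\frac{d}{dt}$ acting at $t = x^{(n)} = 1 - z/(2n^2)$ against a function of $z$ carries a Jacobian $\frac{dz}{dt} = -2n^2$, so differentiating in $t$ rather than in $z$ inserts a factor $n^2$. Reconciling these bookkeeping factors, and rewriting $\frac{\kappa}{2}\cdot 2^{\alpha+1} z^{\frac{\ell-\alpha-1}{2}} J_{\alpha+1}^{(\ell)}(\kappa\sqrt z)$ in terms of $\widetilde J_{\alpha+1}(t) = t J_{\alpha+1}(t)$ — note $\widetilde J_{\alpha+1}^{(\ell)}(\kappa\sqrt z)$ is, by Leibniz, $\kappa\sqrt z\, J_{\alpha+1}^{(\ell)}(\kappa\sqrt z) + \ell J_{\alpha+1}^{(\ell-1)}(\kappa\sqrt z)$ — should produce precisely $2^\alpha z^{\frac{-2+\ell-\alpha}{2}} \widetilde J_{\alpha+1}^{(\ell)}(\kappa\sqrt z)$ after the powers of $z$ and $2$ are collected. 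I would therefore carry out this computation carefully, most likely by a separate short induction on $\ell$ using the discrete Leibniz rule for $\Delta^{(\ell)}$ applied to the product $(n+\alpha+\beta+1)P_{n-1}^{(\alpha+1,\beta+1)}$, so that the lower-order terms with fewer differences are seen to vanish in the limit and the surviving term reorganizes into $\widetilde J_{\alpha+1}^{(\ell)}$.

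The main obstacle I anticipate is \emph{not} any deep analytic input — the Mehler--Heine asymptotics of Proposition~\ref{jacobi-asymp} already does the heavy lifting and the uniform convergence there transfers (the derivative formula \eqref{jacobi-der} being exact and the regime compact) — but rather the careful bookkeeping: keeping track of (i) the Jacobian factor $n^2$ from differentiating at $x^{(n)}$, (ii) the extra $n$ from the prefactor $\frac12(n+\alpha+\beta+1)$, (iii) the parameter shift $(\alpha,\beta)\to(\alpha+1,\beta+1)$ which changes the normalizing exponent, and (iv) the discrete Leibniz correction terms, and then verifying that the combinatorics of $J_{\alpha+1}^{(\ell)}$ and $J_{\alpha+1}^{(\ell-1)}$ reassemble exactly into $\widetilde J_{\alpha+1}^{(\ell)}(\kappa\sqrt z)$. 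Establishing that the lower-order discrete-Leibniz terms genuinely contribute $o(1)$ after the chosen normalization is the step most prone to error, and I would handle it by making the induction hypothesis strong enough to control one lower difference order at a time.
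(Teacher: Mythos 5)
Your overall route is the same as the paper's: differentiate via \eqref{jacobi-der}, propagate the derivative through the difference operator by induction to get an exact identity (the paper's \eqref{rel-der}, namely $2\dot{\Delta}_{P,n}^{(\alpha,\beta;\,\ell)}=(n+\alpha+\beta+1)\Delta_{P,n-1}^{(\alpha+1,\beta+1;\,\ell)}+\ell\,\Delta_{P,n}^{(\alpha+1,\beta+1;\,\ell-1)}$), apply Proposition \ref{jacobi-asymp} with the shifted parameters $(\alpha+1,\beta+1)$, and reassemble via the Leibniz identity $(tJ_{\alpha+1}(t))^{(\ell)}=tJ_{\alpha+1}^{(\ell)}(t)+\ell J_{\alpha+1}^{(\ell-1)}(t)$. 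However, there is a genuine error in how you plan to treat the discrete-Leibniz correction term. You assert twice that the term with one fewer difference ``will be of lower order in $n$'' and that the induction should show it ``vanishes in the limit.'' It does not: by Proposition \ref{jacobi-asymp}, $\Delta_{P,n}^{(\alpha+1,\beta+1;\,\ell-1)}(x^{(n)})\asymp n^{(\alpha+1)-(\ell-1)}=n^{\alpha+2-\ell}$, which is exactly the same order as the main term $(n+\alpha+\beta+1)\Delta_{P,n-1}^{(\alpha+1,\beta+1;\,\ell)}\asymp n\cdot n^{(\alpha+1)-\ell}$. After the normalization $n^{-2+\ell-\alpha}$ the correction term survives and contributes precisely the $\ell J_{\alpha+1}^{(\ell-1)}(\kappa\sqrt z)$ piece of $\widetilde J_{\alpha+1}^{(\ell)}(\kappa\sqrt z)$; if you discarded it as you propose, you would obtain only $\kappa\sqrt z\,J_{\alpha+1}^{(\ell)}(\kappa\sqrt z)$, which is the wrong limit for $\ell\ge 1$. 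You in fact notice that $\widetilde J_{\alpha+1}^{(\ell)}$ has these two pieces, so your plan is internally inconsistent on this point.

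A second, smaller issue: the chain-rule ``Jacobian $-2n^2$'' is a double count. The exponent $-2+\ell-\alpha$ is already fully accounted for by (i) the parameter shift $\alpha\to\alpha+1$, which changes the normalizing power from $n^{\ell-\alpha}$ to $n^{\ell-\alpha-1}$, and (ii) the prefactor $\tfrac12(n+\alpha+\beta+1)\sim\tfrac12\kappa n$, which costs one further power of $n$ (you need to \emph{divide} by an extra $n$, not multiply; this sign slip is what created the fictitious $n^2$ discrepancy you then attributed to the Jacobian). Once these two points are corrected, the computation closes exactly as in the paper.
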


\begin{proof}
The relation \eqref{jacobi-der} can be written as $$ 2 \dot{\Delta}_{P, n}^{ (\alpha, \beta; \,0)} = (n + \alpha   + \beta + 1) \Delta_{P, n - 1}^{ (\alpha + 1, \beta + 1; 0)}.$$ From this formula, it is readily to deduce by induction that for all $\ell \ge 0$, \begin{align}\label{rel-der} 2 \dot{\Delta}_{P, n}^{( \alpha, \beta; \,\ell)}  = (n + \alpha  + \beta + 1) \Delta_{P, n- 1}^{( \alpha+ 1, \beta + 1; \ell)} + \ell \cdot \Delta_{P, n}^{( \alpha + 1, \beta + 1; \ell-1)}. \end{align} In view of Proposition \ref{jacobi-asymp} and  identity (\ref{rel-der}), we have  \begin{align*} & \lim_{n \to \infty} n^{-2 + \ell - \alpha} \dot{\Delta}_{P,n}^{ (\alpha, \beta; \,\ell)} (x^{(n)}) \\  = &     2^\alpha  z^{\frac{- 2 + \ell - \alpha}{2}} \Big[ \kappa  \sqrt{z} J_{\alpha + 1}^{(\ell)} ( \kappa \sqrt{z}) + \ell J_{\alpha + 1}^{(\ell-1)}( \kappa \sqrt{z})\Big]  \\ = &  2^\alpha z^{\frac{-2 + \ell -\alpha}{2}} \widetilde{J}_{\alpha+1}^{(\ell)}( \kappa \sqrt{z}). \end{align*} The last equality follows from Leibniz formula $$\Big(t J_{\alpha+1}(t)\Big)^{(\ell)} = t J_{\alpha + 1}^{(\ell)} (t) + \ell J_{\alpha + 1}^{(\ell - 1)} (t) .$$
\end{proof}

\subsection{Asymptotics for Higher Differences of Jacobi's Functions of the Second Kind.}

Let $Q_n^{(\alpha, \beta)} $ be the Jacobi's functions of second kind defined as follows. For  $ x \in \C \setminus [-1, 1]$,  $$ Q_n^{(\alpha, \beta)} (x) : = \frac{1}{2} (x - 1)^{- \alpha} (x + 1)^{-\beta} \int_{-1}^1 (1-t)^{\alpha} (1 + t )^{\beta} \frac{P_n^{(\alpha, \beta)} (t)}{x - t} dt. $$

\begin{prop}\label{asymp-Q}
Let $s > -1$ and  $r_n = \frac{w}{2n^2}$ with $ w > 0$. Then
$$\lim_{n \to \infty}  n^{-s}  Q_{\kappa_n}^{(s)} ( 1 + r_n ) =  2^s w^{- \frac{s}{2}} K_s( \kappa \sqrt{w}),$$ where $K_s$ is the modified Bessel function of second kind with order $s$. For any $\varepsilon > 0$, the convergence is uniform as long as $\kappa \in [\varepsilon, 1]$ and $w$ ranges in a bounded simply connected subset of $\C\setminus \{0\}$.
\end{prop}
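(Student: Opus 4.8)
The plan is to bypass the Neumann integral in the definition and reduce the statement to a Laplace‑type limit that reproduces directly the classical integral representation of $K_s$. First I would insert the Rodrigues formula
\begin{align*}
(1-t)^{s}\,P_{\kappa_n}^{(s)}(t) = \frac{(-1)^{\kappa_n}}{2^{\kappa_n}\,\kappa_n!}\,\frac{d^{\kappa_n}}{dt^{\kappa_n}}\Big[(1-t)^{\kappa_n+s}(1+t)^{\kappa_n}\Big]
\end{align*}
into the defining integral of $Q_{\kappa_n}^{(s)}$ and integrate by parts $\kappa_n$ times. Since $s+1>0$ and $x=1+r_n\notin[-1,1]$ all boundary terms vanish, and $\frac{d^{\kappa_n}}{dt^{\kappa_n}}\frac{1}{x-t}=\frac{\kappa_n!}{(x-t)^{\kappa_n+1}}$, so after the affine change $t=1-2\tau$ one obtains
\begin{align*}
Q_{\kappa_n}^{(s)}(1+r_n) = 2^{\kappa_n+s}\,r_n^{-s}\int_0^1 \frac{\tau^{\kappa_n+s}(1-\tau)^{\kappa_n}}{(r_n+2\tau)^{\kappa_n+1}}\,d\tau
\end{align*}
(equivalently this is the Euler integral for the ${}_2F_1$ in the hypergeometric representation of $Q_n^{(\alpha,\beta)}$, cf.\ \cite{Szego-OP}). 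The integrand concentrates near $\tau\asymp\sqrt{r_n}$, so the natural substitution is $\tau=\epsilon_n\rho$ with $\epsilon_n:=\sqrt{r_n/2}=\sqrt{w}/(2n)\to0$.

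Carrying out this substitution and collecting powers of $2$ and $r_n$, the representation becomes
\begin{align*}
Q_{\kappa_n}^{(s)}(1+r_n) = 2^{s/2-1}\,r_n^{-s/2}\int_0^{1/\epsilon_n}\phi_n(\rho)\,d\rho,\qquad \phi_n(\rho):=\rho^{s}\cdot\frac{1}{\rho+\epsilon_n}\cdot\Big(\frac{\rho(1-\epsilon_n\rho)}{\rho+\epsilon_n}\Big)^{\kappa_n}.
\end{align*}
For fixed $\rho>0$ I would write the last factor as $\exp\big(\kappa_n[\log(1-\epsilon_n\rho)-\log(1+\epsilon_n/\rho)]\big)$, expand, and use $\kappa_n\epsilon_n\to\kappa\sqrt{w}/2$ together with $\kappa_n\epsilon_n^2\to0$ to get the pointwise limit $\phi_n(\rho)\to\rho^{s-1}\exp\big(-\tfrac{\kappa\sqrt w}{2}(\rho+1/\rho)\big)$. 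Granting that one may pass to the limit under the integral sign, the classical identity $\int_0^\infty\rho^{\nu-1}\exp\big(-\tfrac z2(\rho+1/\rho)\big)\,d\rho=2K_\nu(z)$ gives $\int_0^{1/\epsilon_n}\phi_n\to 2K_s(\kappa\sqrt w)$; since $r_n^{-s/2}=2^{s/2}n^{s}w^{-s/2}$, this is exactly the asserted $n^{-s}Q_{\kappa_n}^{(s)}(1+r_n)\to 2^{s}w^{-s/2}K_s(\kappa\sqrt w)$, and the same computation shows that no restriction on $s>-1$ is needed.

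The one substantive point — and the step I expect to absorb most of the work — is justifying the interchange of limit and integral, uniformly for $\kappa\in[\varepsilon,1]$ and for $w$ in the prescribed set. For $\rho\in[\epsilon_n,1/\epsilon_n]$ one bounds $1/(\rho+\epsilon_n)\le1/\rho$ and, using $\log(1+u)\ge u/2$ on $[0,1]$, $\log(1-u)\le -u$, and the fact that $\kappa_n\epsilon_n$ is bounded below for large $n$ (this is precisely where the hypothesis $\kappa\ge\varepsilon>0$ enters), one gets $\big(\rho(1-\epsilon_n\rho)/(\rho+\epsilon_n)\big)^{\kappa_n}\le e^{-c(\rho+1/\rho)}$ for some $c>0$ independent of $n$, hence $\phi_n(\rho)\le\rho^{s-1}e^{-c(\rho+1/\rho)}$, which is integrable on $(0,\infty)$; for $\rho<\epsilon_n$ one has $\big(\rho(1-\epsilon_n\rho)/(\rho+\epsilon_n)\big)^{\kappa_n}<2^{-\kappa_n}$, so that part of the integral is $O(2^{-\kappa_n}\epsilon_n^{s})\to0$. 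It is important here to keep the term $\log(1+\epsilon_n/\rho)$ intact rather than linearize it, since it is exactly this factor that tames the integrand near $\rho=0$; that is the only place where the estimate is delicate. Finally, to cover complex $w$ I would establish the limit first for $w>0$ and then extend it by a normal‑families (Vitali) argument, the functions $w\mapsto n^{-s}Q_{\kappa_n}^{(s)}(1+w/(2n^2))$ being analytic and locally uniformly bounded by the same estimates.
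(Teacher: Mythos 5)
Your argument is correct and reaches the stated limit by a genuinely different route from the paper's. The paper's proof sets $1+r_n=\tfrac12(t_n+t_n^{-1})$ and quotes Szeg\H{o}'s integral representation [4.82.4], which already exhibits $Q_{\kappa_n}^{(s)}$ as $\int_{-\infty}^{\infty}(\cdots)^{-s}\bigl(x+(x^2-1)^{1/2}\cosh\tau\bigr)^{-\kappa_n-1}d\tau$; the limit is then read off from $\int_{-\infty}^{\infty}e^{-s\tau-\sqrt{w}\cosh\tau}\,d\tau=2K_s(\sqrt{w})$, with the interchange of limit and integral left implicit. You instead derive an Euler-type representation from the Rodrigues formula, rescale by $\tau=\epsilon_n\rho$, and land on the equivalent identity $\int_0^\infty\rho^{s-1}e^{-\frac z2(\rho+1/\rho)}d\rho=2K_s(z)$ (the two Bessel integrals match under $\rho=e^\tau$). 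I checked the bookkeeping: the prefactor $2^{s/2-1}r_n^{-s/2}$ times $2K_s(\kappa\sqrt w)$ does give $2^s n^s w^{-s/2}K_s(\kappa\sqrt w)$, and your domination $\phi_n(\rho)\le\rho^{s-1}e^{-c(\rho+1/\rho)}$ on $[\epsilon_n,1/\epsilon_n]$ is valid, so the dominated-convergence step — precisely what the paper glosses over — is sound and gives the uniformity in $\kappa\in[\varepsilon,1]$ for real $w$ in compacta of $(0,\infty)$. Two caveats. First, the aside that ``no restriction on $s>-1$ is needed'' is not accurate at the level of your representation: the convergence of the defining integral of $Q_{\kappa_n}^{(s)}$, the vanishing of the boundary terms in the $\kappa_n$-fold integration by parts, and the estimate $\int_0^{\epsilon_n}\rho^s(\rho+\epsilon_n)^{-1}d\rho=O(\epsilon_n^{s})$ all use $s+1>0$. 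Second, the Vitali extension to complex $w$ requires locally uniform bounds on $w\mapsto n^{-s}Q_{\kappa_n}^{(s)}(1+w/(2n^2))$, and the triangle-inequality estimates behind your domination only deliver these when $\operatorname{Re}w$ is bounded below by a positive constant (for $\operatorname{Re}w\le 0$ the point $1+w/(2n^2)$ approaches $[-1,1]$ and $(x-t)^{-\kappa_n-1}$ is no longer controlled after taking moduli); this leaves the full uniformity claim over ``bounded simply connected subsets of $\C\setminus\{0\}$'' incompletely justified — though the paper's own one-line treatment of that clause is no more careful, and its hypothesis in any case assumes $w>0$.
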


\begin{proof}
We show the proposition when $\kappa_n = n$, the general case is similar.
Define $t_n$ by the formula $$ 1 + r_n = \frac{1}{2} \Big( t_n + \frac{1}{t_n} \Big), \quad | t_n | < 1.$$ By definition, we have $$\lim_{n \to \infty} n ( 1 - t_n) = \sqrt{w}. $$ We now use the integral representation for the Jacobi function of the second kind (cf.  \cite[4.82.4]{Szego-OP}). Write \begin{align*}  Q_n^{(s)} ( 1 + r_n) =   \frac{1}{2} \Big( \frac{4t_n}{ 1 - t_n}\Big)^s  & \int_{-\infty}^{\infty} \Big( ( 1+ t_n ) e^{\tau} + 1 - t_n \Big)^{-s} \times \\ & \times \Big(  1 + r_n + (2r_n + r_n^2)^{\frac{1}{2}} \cosh \tau \Big)^{- n - 1} d\tau .\end{align*} Taking $ n \to \infty$ and using the integral representation for the modified Bessel function(cf. \cite[9.6.24]{Ab}), we see that  \begin{align*} &    \lim_{n \to \infty} n^{-s} Q_n^{(s)} ( 1 + r_n)  =   2^{s - 1} w^{-\frac{s}{2}} \int_{-\infty}^{\infty} e^{-s \tau - \sqrt{w} \cosh \tau} d \tau  \\ & = 2^{s - 1} w^{-\frac{s}{2}} \int_{-\infty}^{\infty} e^{- \sqrt{w} \cosh \tau}  \cosh (s\tau) d \tau \\ & =  2^{s } w^{-\frac{s}{2}} \int_{0}^{\infty} e^{- \sqrt{w} \cosh \tau}  \cosh (s\tau) d \tau = 2^s w^{- \frac{s}{2}} K_s(  \sqrt{w}). \end{align*}
\end{proof}

\begin{prop}\label{asymp-diff-Q}
In the same condition as in Proposition \ref{asymp-Q}, we have for all $ \ell \ge 0$,  \begin{eqnarray}\label{asymp-Q-diff}  \lim_{n \to \infty} n^{\ell - s} \Delta_{Q, \kappa_n}^{(s; \, \ell)} ( 1 + r_n ) = 2^s w^{\frac{\ell-s}{2}}K_s^{(\ell)} (\kappa \sqrt{w}), \end{eqnarray} where $K_s^{(\ell)}$ is the $\ell$-th derivative of the modified Bessel function of second kind $K_s$. Moreover,  For any $\varepsilon > 0$, the convergence is uniform as long as $\kappa \in [\varepsilon, 1]$ and $w$ ranges in a bounded simply connected subset of $\C\setminus \{0\}$.
\end{prop}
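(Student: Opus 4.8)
\medskip
\noindent\emph{Proof plan.} The plan is to extend the integral-representation computation from the proof of Proposition~\ref{asymp-Q}. The key point is that in the representation \cite[4.82.4]{Szego-OP} used there, the degree enters only through the single factor $u_n(\tau)^{-n-1}$, where $u_n(\tau):=1+r_n+(2r_n+r_n^2)^{1/2}\cosh\tau$; hence the $\ell$-th forward difference in the degree interacts with it through the binomial identity $\sum_{j=0}^{\ell}\binom{\ell}{j}(-1)^{\ell-j}u^{-(n+j)-1}=u^{-n-1}(u^{-1}-1)^{\ell}$. First I would fix $n$, write $t_n$ for the number with $1+r_n=\tfrac12(t_n+t_n^{-1})$ and $|t_n|<1$ as in the proof of Proposition~\ref{asymp-Q}, apply the difference operator term by term under the integral, and thereby record
\begin{multline*}
\Delta_{Q,\kappa_n}^{(s;\,\ell)}(1+r_n)=\frac12\Bigl(\frac{4t_n}{1-t_n}\Bigr)^{\!s}\int_{-\infty}^{\infty}\bigl((1+t_n)e^{\tau}+1-t_n\bigr)^{-s}\\ \times\,u_n(\tau)^{-\kappa_n-1}\bigl(u_n(\tau)^{-1}-1\bigr)^{\ell}\,d\tau ,
\end{multline*}
i.e.\ the integrand from the proof of Proposition~\ref{asymp-Q} (with degree $\kappa_n$) multiplied by the extra factor $\bigl(u_n(\tau)^{-1}-1\bigr)^{\ell}$.

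Next I would collect the scalings, all valid pointwise in $\tau$ (and locally uniformly) over the stated ranges of $\kappa$ and $w$: from $n(1-t_n)\to\sqrt w$ and $t_n\to1$ one gets $n^{-s}\bigl(\tfrac{4t_n}{1-t_n}\bigr)^{s}\to4^{s}w^{-s/2}$ and $\bigl((1+t_n)e^{\tau}+1-t_n\bigr)^{-s}\to2^{-s}e^{-s\tau}$, and from $n(u_n(\tau)-1)\to\sqrt w\cosh\tau$ one gets $u_n(\tau)^{-\kappa_n-1}\to e^{-\kappa\sqrt w\cosh\tau}$ and $\bigl(n(u_n(\tau)^{-1}-1)\bigr)^{\ell}\to(-1)^{\ell}w^{\ell/2}\cosh^{\ell}\tau$. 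Multiplying by $n^{\ell-s}$ and passing to the limit under the integral --- justified by dominated convergence, the super-exponential decay of $u_n(\tau)^{-\kappa_n-1}$ absorbing the $\cosh^{\ell}\tau$ growth of the other factors exactly as in the proof of Proposition~\ref{asymp-Q} --- gives
\[
\lim_{n\to\infty}n^{\ell-s}\Delta_{Q,\kappa_n}^{(s;\,\ell)}(1+r_n)=\frac{(-1)^{\ell}}{2}\,2^{s}w^{(\ell-s)/2}\int_{-\infty}^{\infty}e^{-s\tau-\kappa\sqrt w\cosh\tau}\cosh^{\ell}\tau\,d\tau .
\]
Finally I would identify the integral: differentiating the representation $K_s(z)=\tfrac12\int_{-\infty}^{\infty}e^{-s\tau-z\cosh\tau}\,d\tau$ (already used in the proof of Proposition~\ref{asymp-Q}, cf.\ \cite[9.6.24]{Ab}) $\ell$ times under the integral sign gives $K_s^{(\ell)}(z)=\tfrac{(-1)^{\ell}}{2}\int_{-\infty}^{\infty}\cosh^{\ell}\tau\,e^{-s\tau-z\cosh\tau}\,d\tau$, and substituting $z=\kappa\sqrt w$ produces the asserted formula. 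The uniformity is inherited verbatim from Proposition~\ref{asymp-Q}.

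The step I expect to demand the most care is the uniform interchange of limit and integral for complex $w$: one must check that $|u_n(\tau)^{-\kappa_n-1}|$ decays super-exponentially in $\tau$, uniformly for $\kappa\in[\varepsilon,1]$ and for $w$ in the prescribed bounded simply connected subset of $\C\setminus\{0\}$ (on which a branch of $\sqrt w$ is fixed), so that the new factor $\bigl(u_n(\tau)^{-1}-1\bigr)^{\ell}=O\bigl((\cosh\tau)^{\ell}/n^{\ell}\bigr)$ is harmless; this is, however, exactly the estimate already behind Proposition~\ref{asymp-Q}, so it brings nothing genuinely new. A parallel route, mirroring the proof of Proposition~\ref{jacobi-asymp}, would instead first establish the $Q$-analogue of the recursion \eqref{recursion1} --- it has the same shape with the factor $(1-x)$ replaced by $(x-1)$, the sign flip reflecting the prefactor $(x-1)^{-\alpha}$ in the definition of $Q_n^{(\alpha,\beta)}$, so that its $\ell=0$ instance is the second-kind counterpart of \eqref{recursion-difference} --- and then run the induction on $\ell$ exactly as in Proposition~\ref{jacobi-asymp}, now with Proposition~\ref{asymp-Q} as the base case and with the recurrences for $K_s$, which have the same form as \eqref{differential-relation-J} and \eqref{bessel-der}.
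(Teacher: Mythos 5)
Your proposal is correct, and your primary argument takes a genuinely different route from the paper's. The paper proves Proposition \ref{asymp-diff-Q} by induction on $\ell$: it first derives from \eqref{recursion-difference} the recursion \eqref{rec-sec-diff} for the differences $\Delta_{Q,n}^{(s;\,\ell)}$ (the same shape as \eqref{recursion1}, with $(1-x)$ replaced by $(x-1)$), takes Proposition \ref{asymp-Q} as the base case, and closes the induction using the identity \eqref{derivative-sec-bessel} for $K_s^{(\ell)}$ obtained from \eqref{differential-relation-K} --- this is precisely the ``parallel route'' you sketch in your closing sentences, down to the sign flip and its origin in the prefactor $(x-1)^{-\alpha}$. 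Your main route instead exploits the fact that in the representation \cite[4.82.4]{Szego-OP} the degree enters only through the factor $u_n(\tau)^{-j-1}$, so that the $\ell$-th forward difference produces the single extra factor $\bigl(u_n(\tau)^{-1}-1\bigr)^{\ell}$ via the binomial identity; the individual scalings you record (including $n\bigl(u_n(\tau)^{-1}-1\bigr)\to-\sqrt{w}\cosh\tau$ and the normalization of the prefactors) are all correct, and the identification $K_s^{(\ell)}(z)=\tfrac{(-1)^{\ell}}{2}\int_{-\infty}^{\infty}\cosh^{\ell}\tau\,e^{-s\tau-z\cosh\tau}\,d\tau$ by differentiation under the integral sign closes the computation. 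What your route buys is that it treats all $\ell$ simultaneously, avoids the recursion machinery entirely, and delivers the integral representation of $K_s^{(\ell)}$ as a by-product; it also handles general sequences $\kappa_n$ with $\kappa_n/n\to\kappa$ directly, where the paper proves only $\kappa_n=n$ and appeals to uniformity for the rest. Its cost is that the entire analytic burden falls on the dominated-convergence step for the integral over $\tau$ (in particular for complex $w$, where a branch of $\sqrt{w}$ and the decay of $|u_n(\tau)^{-\kappa_n-1}|$ must be controlled uniformly); you flag this correctly, and it is no more delicate than what Proposition \ref{asymp-Q} itself already requires, on which the paper is equally brief.
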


\begin{proof}

It suffices to prove the proposition in the case $ \kappa_n = n$. The general case can easily be deduced from this special case by using the uniform convergence.

From the identity \eqref{recursion-difference} we obtain $$ (n+1) \Delta_{Q, n}^{(s; \, 1)}  (x) + (n + \frac{s}{2} + 1) (x-1)\Delta_{Q, n}^{(s + 1; \,  0)} (x) = s \Delta_{Q,n}^{(s;\, 0)} (x). $$ By induction, it is readily to write \begin{eqnarray} \label{rec-sec-diff} & & (n+1) \Delta_{Q, n}^{(s; \,  \ell+1)} (x)  + \ell \Delta_{Q, n+1}^{(s; \, \ell)} (x)  +   \ell  (x-1) \Delta_{Q, n + 1}^{(s+1; \, \ell-1)} (x) \\ & &  + ( n + \frac{s}{2} + 1) (x-1)\Delta_{Q, n}^{(s +1; \,  \ell)} (x)   =  s \Delta_{Q, n}^{(s; \, \ell)} (x) ,\nonumber\end{eqnarray} for all $\ell \ge 0$ where by convention, we set $\Delta_{Q, n}^{(s; \, -1)}: = 0$  .

Using the formula (\cite[9.6.26]{Ab}) \begin{align}\label{differential-relation-K} K_s'(t) = - K_{s+1}(t)  + \frac{s}{t} K_s(t), \end{align} we can show that  for $\ell \ge 1$, \begin{eqnarray}\label{derivative-sec-bessel} t \Big[  K_s^{(\ell+1)} (t) + K_{s+1}^{(\ell)} (t)  \Big] = (s-\ell) K_s^{(\ell)}(t)  - \ell K_{s+1}^{(\ell-1)} (t). \end{eqnarray}

Proposition \ref{asymp-Q} says that the equation \eqref{asymp-Q-diff} holds for $\ell  = 0$. Now assume \eqref{asymp-Q-diff} holds for $0, 1, \cdots, \ell$. By (\ref{rec-sec-diff}), we have \begin{eqnarray*} & & \lim_{n \to \infty} n^{\ell+ 1 - s} \Delta_{Q, n}^{(s; \, \ell+1)} ( 1 + r_j^{(n)} )  \\ &  =&   - \ell  \cdot 2^s w_j^{\frac{\ell - s}{2}} K_s^{(\ell)}( \sqrt{w_j})  -  \ell \cdot \frac{w_j}{2} 2^{s+1} w_j^{\frac{\ell - s -2}{2}} K_{s+1}^{(\ell - 1)} ( \sqrt{w_j}) \\ & & -  \frac{w_j}{2}  \cdot 2^{s+1} w_j^{\frac{\ell - s - 1}{2}} K_{s+1}^{(\ell)}( \sqrt{w_j}) + s \cdot  2^s w_j^{\frac{\ell-s }{2}} K_s^{(\ell)}(\sqrt{w_j}) \\  & = & 2^s w_j^{\frac{\ell-s}{2} } \Big[ (s - \ell ) K_s^{(\ell)}(\sqrt{w_j}) - \ell K_{s + 1}^{(\ell  - 1)} (\sqrt{w_j})  - \sqrt{w_j} K_{s + 1}^{(\ell)} (\sqrt{w_j}) \Big]  \\ & = & 2^s w_j^{\frac{\ell +1-s}{2}} K_s^{(\ell+1)}(\sqrt{w_j}).   \end{eqnarray*} This completes the proof.
\end{proof}

\subsection{Asymptotics for Higher Differences of $R_n^{(\alpha, \beta)}$.}

\begin{defn}
Define for $ x \in \C \setminus [-1, 1]$,  $$R_n^{(\alpha, \beta)}(x) : = (x - 1)^{-\alpha} (x + 1)^{- \beta } \int_{-1}^1 \frac{P_n^{(\alpha, \beta)} (t) }{(x - t)^2 } ( 1 - t)^{\alpha} ( 1 + t)^{\beta} d t. $$
\end{defn}

\begin{defn}
For any $s \in \R$, define $$L_s(x) : = s K_{s } (x) -  \frac{x K_{s-1}(x) +  x K_{s + 1} (x) }{2}. $$
\end{defn}

\begin{prop}
Let $ s> -1$, and $\gamma^{(n)} = \frac{u}{2n^2}$ with $ u > 0$. Then we have $$\lim_{n \to \infty}  n^{-2- s}R_{\kappa_n}^{(s)} (  1 + \gamma^{(n)} )   =  2^{\frac{2s + 3}{2}} \cdot u^{-\frac{s + 2}{2}} L_{s}(\kappa \sqrt{u}).$$ Moreover, for any $\varepsilon > 0$,  the convergence is uniform as long as $ \kappa \in [\varepsilon, 1 ]$ and $ u  $ ranges in a compact subset of $(0, \infty)$.
\end{prop}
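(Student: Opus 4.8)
The plan is to mimic the proofs of Propositions \ref{asymp-Q} and \ref{asymp-diff-Q}, since $R_n^{(\alpha,\beta)}$ is obtained from $Q_n^{(\alpha,\beta)}$ by replacing $(x-t)^{-1}$ with $(x-t)^{-2}$ in the integrand, which amounts to differentiating in $x$. More precisely, $R_n^{(\alpha,\beta)}(x) = -\frac{d}{dx}\bigl[(x-1)^{\alpha}(x+1)^{\beta} Q_n^{(\alpha,\beta)}(x)\bigr]\cdot(x-1)^{-\alpha}(x+1)^{-\beta}$ up to a sign and a factor of $2$ from the definition of $Q_n$; carefully unwinding the normalizations gives an exact identity expressing $R_n^{(s)}$ as a combination of $Q_n^{(s)}$, its derivative $Q_n^{(s)\prime}$, and possibly $Q_n^{(s+1)}$ (the derivative of the power $(x-1)^{-s}$ producing a shift of parameter analogous to \eqref{recursion-difference}). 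First I would write this algebraic identity explicitly.

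Next I would use the integral representation for $Q_n^{(s)}(1+r_n)$ from \cite[4.82.4]{Szego-OP} that was already invoked in the proof of Proposition \ref{asymp-Q}, differentiate it in the spectral variable, and pass to the limit. In the regime $x^{(n)} = 1 + \gamma^{(n)}$ with $\gamma^{(n)} = u/(2n^2)$, the scaling $n^{-2-s}$ is the natural one: each spatial derivative $d/dx$ costs a factor $n^2$ (since $x-1 \sim 1/n^2$), so starting from $n^{-s}Q_n^{(s)} \to 2^s w^{-s/2}K_s(\kappa\sqrt{w})$ one extra derivative produces the $n^{-2-s}$ normalization and, on the Bessel side, $\frac{d}{dw}\bigl[w^{-s/2}K_s(\kappa\sqrt{w})\bigr]$, which by the recurrence \eqref{differential-relation-K} (that is, $K_s'(t) = -K_{s+1}(t) + \frac{s}{t}K_s(t)$, equivalently $K_s'(t) = -\frac12(K_{s-1}(t)+K_{s+1}(t))$) collapses to an expression in $K_{s-1}$ and $K_{s+1}$ matching $L_s(\kappa\sqrt{u})$. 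I would verify that the constant $2^{(2s+3)/2}\cdot u^{-(s+2)/2}$ and the combination $sK_s(x) - \tfrac{x}{2}(K_{s-1}(x)+K_{s+1}(x))$ come out correctly from this differentiation; this is the routine but bookkeeping-heavy part. Uniformity in $\kappa \in [\varepsilon,1]$ and $u$ in a compact subset of $(0,\infty)$ follows because we may differentiate the uniformly convergent family of integral representations and the dominating functions (e.g. $e^{-c\cosh\tau}$ times polynomials in $\tau$) remain integrable, exactly as in the previous two propositions; alternatively one can invoke Vitali/Cauchy estimates for the analytic dependence on $w$, so that convergence of $n^{-s}Q_{\kappa_n}^{(s)}$ forces convergence of its $w$-derivative.

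An alternative, perhaps cleaner, route is to differentiate the conclusion of Proposition \ref{asymp-diff-Q} itself: write $R_n^{(s)}$ directly in terms of $\frac{\partial}{\partial x}Q_n^{(s)}$ and apply the analyticity-based transfer of convergence to derivatives. Either way the key inputs are (i) the exact algebraic relation between $R_n$, $Q_n$ and its $x$-derivative, (ii) Proposition \ref{asymp-Q}, and (iii) the Bessel recurrence \eqref{differential-relation-K}.

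The main obstacle I anticipate is getting the normalization constants exactly right: the definition of $Q_n^{(\alpha,\beta)}$ carries a factor $\tfrac12$ and the weights $(x-1)^{-\alpha}(x+1)^{-\beta}$, whose derivatives interact with the $1/(x-t)^2$ in the definition of $R_n$, so the identity relating $R_n^{(s)}$ to $Q_n^{(s)}$ and $Q_n^{(s)\prime}$ (and the induced parameter shift $s \to s+1$) must be tracked with care, and then the asymptotic constant $2^{(2s+3)/2}u^{-(s+2)/2}$ and the precise form of $L_s$ must be matched against the limit of the differentiated integral representation. There is no conceptual difficulty once that identity is in hand; the analytic limit and uniformity are handled exactly as in Propositions \ref{asymp-Q} and \ref{asymp-diff-Q}.
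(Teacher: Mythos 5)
Your proposal is correct and follows essentially the same route as the paper: the paper's proof rests on exactly the identity you describe, namely $\frac{d}{dx}\widehat{Q}_n^{(s)}(x) = -(x-1)^s R_n^{(s)}(x)$ with $\widehat{Q}_n^{(s)}(x) = 2(x-1)^s Q_n^{(s)}(x)$, followed by differentiating the Szeg\H{o} integral representation \cite[4.82.4]{Szego-OP} under the integral sign, where the term coming from the weight factor yields the $sK_s$ piece and the term coming from $(x+\sqrt{x^2-1}\cosh\tau)^{-n-1}$ yields the $\tfrac{\sqrt{u}}{2}(K_{s-1}+K_{s+1})$ piece of $L_s$. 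The bookkeeping of constants you flag as the main hazard is indeed the only delicate point, but the structure of your argument matches the paper's proof step for step.
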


\begin{proof}
The uniform convergence can be derived by a careful look at  the following proof. By this uniform convergence, it suffices to show the proposition for $\kappa_n = n $.

Define $z$ by the formula $$ x = \frac{1}{2} \Big( z + \frac{1}{z} \Big), \quad | z | < 1.$$ By the integral representation for the Jacobi function of the second kind (\cite[4.82.4]{Szego-OP}), we have \begin{align*}  Q_n^{(s)} ( x ) =   \frac{1}{2} \Big( \frac{4z}{ 1 - z}\Big)^s  & \int_{-\infty}^{\infty} \Big( ( 1+ z ) e^{\tau} + 1 -z \Big)^{-s} \times \\ & \times \Big(  x + (x^2 - 1)^{\frac{1}{2}} \cosh \tau \Big)^{- n - 1} d\tau .\end{align*} Denote $$ \widehat{Q}^{(s)}_n(x) : = 2 ( x-1)^s Q_n^{(s)} (x) = \int_{-1}^1 \frac{P_n^{(s)} (t) }{ x - t } ( 1 - t)^s dt .$$ Then \begin{align}\label{QR}\left[\frac{d}{dx} \widehat{Q}_n^{(s)} \right] (x) =  - \int_{-1}^1 \frac{P_n^{(s)}(t) }{(x - t)^2} ( 1- t)^s d t  = -  ( x - 1)^s R_n^{(s)} ( x) .\end{align} We have \begin{align*} \widehat{Q}^{(s)}_n(x)  & =   2^s \int_{-\infty}^{\infty} \left( 1 + \sqrt{\frac{x + 1}{x  - 1}} e^{\tau} \right)^{-s}  \Big(  x + (x^2 - 1)^{\frac{1}{2}} \cosh \tau \Big)^{- n - 1} d\tau.\end{align*}
Hence $$ \left[\frac{d}{dx} \widehat{Q}^{(s)}_n\right] (x) = T_1^{(n)}(x)  - T_{2}^{(n)} (x) , $$ where \begin{align*}  T_1^{(n)} (x) =   \frac{s \cdot 2^s }{(x-1)^2} \sqrt{\frac{x - 1}{ x + 1}}  \int_{-\infty}^{\infty} e^{\tau}  & \left( 1 + \sqrt{\frac{x + 1}{x  - 1}} e^{\tau} \right)^{-s-1} \times \\ & \times  \Big(  x + (x^2 - 1)^{\frac{1}{2}} \cosh \tau \Big)^{- n - 1} d\tau \end{align*} and \begin{align*} T_2^{(n)} (x) = (n+1) 2^s \int_{-\infty}^{\infty}  & \left( 1 + \sqrt{\frac{x + 1}{x  - 1}} e^{\tau} \right)^{-s}  \Big(  x + (x^2 - 1)^{\frac{1}{2}} \cosh \tau \Big)^{- n - 2} \times \\ & \times  ( 1 + \frac{x}{\sqrt{ x^2 - 1}} \cosh \tau ) d\tau .\end{align*} We have \begin{align*}  \lim_{n \to \infty} n^{s -2} T_1^{(n)}( 1 + \gamma^{(n)})&  =\sqrt{2} s \cdot u^{\frac{s-2}{2}}\int_{-\infty}^\infty e^{-s \tau - \sqrt{u}\cosh \tau} d \tau \\ & =  2 \sqrt{2} s \cdot u^{\frac{s-2}{2} } K_s (\sqrt{u}).  \end{align*} \begin{align*} \lim_{n \to \infty} n^{s -2} T_2^{(n)} (1 + \gamma^{(n)} )   & =   \sqrt{2} u^{  \frac{s - 1}{2}}  \int_{-\infty}^\infty    e^{-s \tau} e^{- \sqrt{u} \cosh \tau} \cosh \tau  d \tau \\ & =   \sqrt{2 } u^{  \frac{s - 1}{2}}  \left(K_{s + 1}(\sqrt{u}) + K_{s-1}(\sqrt{u})\right).  \end{align*} Hence \begin{align*} & \lim_{n \to \infty} n^{s -2} \left[ \frac{d}{d x} \widehat{Q}_n^{(s)} \right] ( 1 + \gamma^{(n)})  \\ = &  2 \sqrt{2} u^{\frac{s - 2}{2}} \left( s K_s(\sqrt{u}) - \frac{ \sqrt{u}K_{s + 1} (\sqrt{u}) +  \sqrt{u}K_{s-1}(\sqrt{u}) }{2}\right) \\ = & 2 \sqrt{2} u^{\frac{s-2}{2}} L_{s} (\sqrt{u}).   \end{align*} In view of \eqref{QR}, we prove the desired result.
\end{proof}

{\flushleft \bf Remark. } We have the following relations \begin{align} \label{L-relation}  L_s'(x) = - L_{s + 1}(x) + \frac{s }{x } L_s(x), \end{align}  \begin{align} \label{L-der} x \Big[  L_s^{(\ell + 1)} (x) + L_{s + 1}^{(\ell )}(x)\Big] = (s - \ell ) L_s^{(\ell)} (x) - \ell L_{s+1}^{(\ell-1)}(x). \end{align} Let us for example show \eqref{L-relation}. The validity of \eqref{L-der} can be verified easily by induction on $\ell$.  We have \begin{align*} L_s'(x)  = & s K_s'(x)  - \frac{x K_{s - 1}' (x) + K_{s-1}(x) +  x K_{s +1}'(x) + K_{s +1} (x)  }{2} \\ = &  -s K_{s + 1} (x) + \frac{s^2}{x} K_s(x)  - \frac{- x K_s(x) + (s - 1) K_{s - 1}(x)}{2} \\ & -\frac{  -x  K_{s +2} (x) + (s+ 1) K_{s + 1}(x) }{2} \\ = & - \left( (s + 1) K_{s + 1}(x) - \frac{x K_s(x) +  x K_{ s+2} (x) }{2}\right)   \\ &  +  \frac{s}{x}  \left( s K_{s } (x) -  \frac{x K_{s-1}(x) +  x K_{s + 1} (x) }{2} \right) \\ = & - L_{s + 1}(x) + \frac{s }{x} L_s(x) . \end{align*}

\begin{prop}
Let $ s> -1$, and $\gamma^{(n)} = \frac{u}{2n^2}$ with $ u > 0$. Then for $\ell \ge 0$, we have $$\lim_{ n \to \infty}  n^{\ell - s-2}  \Delta_{R, \kappa_n}^{(s; \, \ell)} ( 1 + \gamma^{(n)}) =  2^{\frac{2s + 3}{2}}  \cdot u^{\frac{ \ell -  s-2  }{2}} L_{s}^{(\ell)}(\kappa \sqrt{u}) . $$  Moreover, for any $\varepsilon > 0$,  the convergence is uniform as long as $ \kappa \in [\varepsilon, 1 ]$ and $ u  $ ranges in a compact subset of $(0, \infty)$.
\end{prop}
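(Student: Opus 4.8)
The plan is to mirror, step by step, the proof of Proposition~\ref{asymp-diff-Q}, replacing the Jacobi functions of the second kind by the functions $R_n^{(\alpha,\beta)}$ and the modified Bessel function $K_s$ by the function $L_s$. As there, the uniform convergence claim reduces us to the case $\kappa_n = n$, after which we induct on $\ell$, the base case $\ell = 0$ being the preceding Proposition (i.e.\ the case $\ell=0$ of the present statement), used for the two admissible parameters $s$ and $s+1$ (both $>-1$).

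Step one: record the recursion for the higher differences of $R_n^{(s)}$, the analogue of \eqref{rec-sec-diff}. View \eqref{recursion-difference} with $\alpha = s$, $\beta = 0$ as an identity in the polynomial variable $t$, and apply to both sides the linear operator $f \mapsto \int_{-1}^{1} f(t)\,(1-t)^{s}\,(x-t)^{-2}\,dt$; this is legitimate termwise since $x \notin [-1,1]$ and $s > -1$. With the shorthand $\widehat{R}_{n}^{(s)}(x) := (x-1)^{s} R_{n}^{(s)}(x) = \int_{-1}^{1} P_n^{(s)}(t)(1-t)^{s}(x-t)^{-2}\,dt$, the right side of \eqref{recursion-difference} turns into $(n+1)\bigl(\widehat{R}_{n}^{(s)}(x) - \widehat{R}_{n+1}^{(s)}(x)\bigr) + s\,\widehat{R}_{n}^{(s)}(x)$, while on the left the factor $(1-t)$ raises the Jacobi exponent, $(1-t)(1-t)^{s} = (1-t)^{s+1}$, which pairs with $P_n^{(s+1)}$ to give $\bigl(n+\tfrac{s}{2}+1\bigr)\widehat{R}_{n}^{(s+1)}(x)$. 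Dividing by $(x-1)^s$ and using $\widehat{R}_n^{(s+1)} = (x-1)^{s+1}R_n^{(s+1)}$, we get
\[
(n+1)\,\Delta_{R,n}^{(s;\,1)}(x) + \Bigl(n+\tfrac{s}{2}+1\Bigr)(x-1)\,\Delta_{R,n}^{(s+1;\,0)}(x) = s\,\Delta_{R,n}^{(s;\,0)}(x),
\]
and the same induction on $\ell$ that gave \eqref{rec-sec-diff} now yields, for all $\ell \ge 0$,
\begin{align*}
& (n+1)\,\Delta_{R,n}^{(s;\,\ell+1)}(x) + \ell\,\Delta_{R,n+1}^{(s;\,\ell)}(x) + \ell(x-1)\,\Delta_{R,n+1}^{(s+1;\,\ell-1)}(x) \\
& \qquad + \Bigl(n+\tfrac{s}{2}+1\Bigr)(x-1)\,\Delta_{R,n}^{(s+1;\,\ell)}(x) = s\,\Delta_{R,n}^{(s;\,\ell)}(x).
\end{align*}

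Step two: the inductive step. Assuming the asserted formula at differencing orders $0,1,\dots,\ell$ for the parameters $s$ and $s+1$, substitute $x = 1 + \gamma^{(n)}$ with $\gamma^{(n)} = u/(2n^2)$, divide the recursion by $(n+1)$, multiply by $n^{(\ell+1)-s-2}$, and pass to the limit term by term using the induction hypothesis and the preceding Proposition. In the two terms carrying the factor $(x-1) = u/(2n^2)$ this factor supplies exactly the extra $n^{-2}$ and the extra power of $u$ needed, and the constant propagates because $2^{\frac{2(s+1)+3}{2}}\cdot\tfrac{u}{2} = 2^{\frac{2s+3}{2}}\cdot u$, just as $2^{s}$ propagated in the $Q$ computation. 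The four resulting limiting terms are then assembled by means of the recurrence \eqref{L-der} for the derivatives of $L_s$ (the analogue of \eqref{derivative-sec-bessel}, whose validity is noted in the Remark following the preceding Proposition), producing precisely $2^{\frac{2s+3}{2}}\,u^{\frac{(\ell+1)-s-2}{2}}\,L_s^{(\ell+1)}(\kappa\sqrt{u})$ with $\kappa = 1$; this closes the induction.

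Step three: the reduction to $\kappa_n = n$ and the uniformity. Once the $\kappa_n = n$ statement is known, uniformly for $u$ in compact subsets of $(0,\infty)$, a general sequence $\kappa_n$ with $\kappa_n/n \to \kappa$ is handled by writing $\gamma^{(n)} = \tfrac{1}{2\kappa_n^2}\,v_n$ with $v_n := \kappa_n^2 u / n^2 \to \kappa^2 u$, applying the $\kappa_n=n$ result at index $\kappa_n$ with parameter $v_n$, and multiplying by $(\kappa_n/n)^{s+2} \to \kappa^{s+2}$; the $\kappa$-powers cancel against $(\kappa^2 u)^{(\ell-s-2)/2}$ and $\sqrt{\kappa^2 u}$, leaving $2^{\frac{2s+3}{2}} u^{\frac{\ell-s-2}{2}} L_s^{(\ell)}(\kappa\sqrt{u})$, and uniformity over $\kappa \in [\varepsilon,1]$ follows from the uniform convergence in $v$ together with the continuity of the limit. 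The point requiring care — rather than a genuine obstacle — is the bookkeeping of the powers of $2$, $u$ and $n$ in the four terms of Step two; but this is structurally identical to the already completed proof of Proposition~\ref{asymp-diff-Q}, so no new idea is required.
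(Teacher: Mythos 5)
Your proof is correct and follows essentially the same route as the paper: reduce to $\kappa_n = n$, derive the recursion for $\Delta_{R,n}^{(s;\,\ell)}$ analogous to \eqref{rec-sec-diff} (you supply the derivation via integrating \eqref{recursion-difference} against $(1-t)^s(x-t)^{-2}\,dt$, which the paper only asserts), and induct on $\ell$ using \eqref{L-der}. The only blemish is a bookkeeping slip in your reduction to general $\kappa_n$ (the normalizing factor should be $(\kappa_n/n)^{s+2-\ell}$ rather than $(\kappa_n/n)^{s+2}$ so that it cancels $\kappa^{\ell-s-2}$ from $(\kappa^2u)^{(\ell-s-2)/2}$), a step the paper omits entirely.
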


\begin{proof}
Again, we show the proposition only for $\kappa_n = n$. The formula holds for $\ell = 0$. Assume that the formula holds for all $0 , 1, \cdots, \ell$, we shall show that it holds for $\ell + 1$. By similar arguments as that for $ \Delta_{Q, n}^{(s; \, \ell)}$, we can easily obtain that  \begin{align*}  & (n+1) \Delta_{R,  n}^{(s; \,  \ell+1)} (x)  + \ell \Delta_{R,  n+1}^{(s; \, \ell)} (x)  +   \ell  (x-1) \Delta_{R, n + 1}^{(s+1; \, \ell-1)} (x) \\ &  + ( n + \frac{s}{2} + 1) (x-1)\Delta_{R, n}^{(s +1; \,  \ell)} (x)   =  s \Delta_{R, n}^{(s; \, \ell)} (x) . \end{align*} Hence \begin{align*} & \lim_{n \to \infty} n^{(\ell +1) - s-2} \Delta_{R, n}^{(s; \, \ell + 1)} ( 1 +  \gamma^{(n)}) \\   = &  (s- \ell)     2^{\frac{2s + 3}{2}} \cdot u^{\frac{ \ell - s - 2}{2}} L_{s}^{(\ell)}( \sqrt{u})  -   \ell   \frac{u }{2} \cdot 2^{\frac{2s + 5}{2}}  \cdot u^{\frac{ \ell - s - 4}{2}} L_{s + 1}^{(\ell-1)}( \sqrt{u})  \\ & - \frac{u }{2} \cdot 2^{\frac{2s + 5}{2}}  \cdot u^{\frac{ \ell - s - 3}{2}} L_{s+1}^{(\ell)}( \sqrt{u})  \\ = & 2^{\frac{2s + 3}{2}} \cdot u^{\frac{ \ell - s -1 }{2}} \left(  \frac{s - \ell}{\sqrt{u}}  L_s^{(\ell)} (\sqrt{u})   - \frac{\ell}{\sqrt{u}} L_{ s + 1}^{(\ell-1)} (\sqrt{u}) - L_{s+1}^{(\ell)} (\sqrt{u})  \right) \\  = &  2^{\frac{2s + 3}{2}}  \cdot u^{\frac{(\ell +1) - s -2}{2}}  L_{s + 1}^{(\ell + 1)} (\sqrt{u}).
\end{align*}

\end{proof}


\section{Bessel Point Processes as Radial Parts of Pickrell Measures on Infinite Matrices}

\subsubsection{Radial parts of Pickrell measures and the infinite Bessel point processes}

Following Pickrell, we introduce a map $$\mathfrak{rad}_n: \text{Mat}(n, \C) \rightarrow \R_{+}^n $$ by the formula $$ \mathfrak{rad}_n(z) = (\lambda_1 (z^*z), \dots, \lambda_n(z^*z)) .$$ Here $ (\lambda_1 (z^*z), \dots, \lambda_n(z^*z))$ is the collection of the eigenvalues of the matrix $z^*z$ arranged in non-decreasing order.

The radial part of the Pickrell measure $\mu_n^{(s)}$ is defined as $(\mathfrak{rad}_n )_{* } \mu_n^{(s)}$. Note that, since finite-dimensional unitary groups are compact, even for  $s \le -1$, if $n + s > 0$, then the radial part of $\mu_n^{(s)}$ is well-defined.

Denote $d\lambda$ the Lebesgue measure on $\R_{+}^n$, then the radial part of the measure $\mu_n^{(s)}$ takes the form $$ \text{const}_{n, s} \cdot \prod_{i< j} (\lambda_i - \lambda_j)^2 \cdot \frac{1}{( 1+ \lambda_i)^{2n + s}} d\lambda.$$ After the change of variables $$u_i = \frac{\lambda_i -1}{ \lambda_i + 1}, $$ the radial part $(\mathfrak{rad}_n)_{*} \mu_n^{(s)} = (\mathfrak{rad}_n \circ \pi_n)_{*} \mu^{(s)}$ is a measure defined on $(-1, 1)^n$ by the formula \begin{align}   \label{jacobi-measure}  \text{const}_{n,s} \cdot \prod_{1\le i <   j \le n} (u_i - u_j)^2 \cdot \prod_{ i = 1}^n ( 1- u_i)^s du_i. \end{align}



For $ s > -1$,  the constant is chosen such that the measure \eqref{jacobi-measure} is a probability measure, it is the Jacobi orthogonal polynomial ensemble, a determinantal point process induced by the $n$-th Christoffel-Darboux projection operator for Jacobi polynomials. The classical Heine-Mehler asympotitics of Jacobi polynomials imply that these determinantal point processes, when rescaled with the scaling \begin{align}\label{rescal} u_i = 1 - \frac{y_i}{2n^2}, i = 1, \dots, n, \end{align} have as scaling limit the Bessel point process of Tracy and Widom \cite{Tracy-Widom94}, use the same notation as in \cite{Bufetov_inf-deter}, we denote this point process on $ (0, \infty)$ by $\widetilde{\mathbb{B}}^{(s)}$.

For $ s \le -1$, the scaling limit under the scaling regime \eqref{rescal} is an infinite determinantal measure $\widetilde{\mathbb{B}}^{(s)}$ on $\Conf((0, \infty))$.

In both cases, $\widetilde{\mathbb{B}}^{(s)}$ is closely related to the decomposition measure $\mathbb{B}^{(s)}$ for the Pickrell measure $\mu^{(s)}$:  the change of variable $y = 4/x$ reduces the decomposition measure $\mathbb{B}^{(s)}$ to  $\widetilde{\mathbb{B}}^{(s)}$.

\subsubsection{Christoffel-Uvarov deformations of Jacobi orthogonal polynomial ensembles and the scaling limits}

Now consider  a sequence of functions $ g^{(n)}:  (-1, 1) \rightarrow (0, 1]$ such that the measures $(1 - u)^s g^{(n)} (u) du $ on $(-1,1)$ have moments of all orders. On the cube $(-1, 1)^n$,  the probability measure $$ \text{const}_{n,s} \cdot \prod_{ 1 \le i < j \le n} (u_i - u_j)^2 \prod_{i = 1}^n ( 1-u_i)^s g^{(n)} (u_i) d u_i $$ gives a determinantal point process induced by the corresponding $n$-th Christoffel-Darboux projection. After change of variable \begin{align} \label{rescal2}  y = \frac{n^2 x - 1}{n^2 x + 1}, \end{align} this point process becomes \begin{align} \label{multi-change} \mathbb{P}_{\widetilde{g}^{(n)}}^{(s,n)} : = \frac{\Psi_{\widetilde{g}^{(n)}} \mathbb{B}^{(s,n )}}{ \int_{ \text{Conf} \big((0, + \infty)\big)} \Psi_{\widetilde{g}^{(n)}} d \mathbb{B}^{(s,n )} }, \end{align}
where  $ \mathbb{B}^{(s,n)}$ is the point process $(\mathfrak{rad}_n \circ \pi_n)_{*}\mu^{(s)}$ after the change of variable given in \eqref{rescal2} and $\widetilde{g}^{(n)}$ is the function on $(0, \infty)$ given by  \begin{align} \label{functions-g}\widetilde{g}^{(n)} ( x ) = g^{(n)} ( \frac{n^2 x-1}{n^2 x+ 1}).\end{align}

We shall need the following elementary lemma, whose routine proof is included for completeness.

\begin{lem}\label{DC-lem}
Let $(\Omega, m)$ be a measure space equipped with a $\sigma$-finite measure $m$. Given two sequence of positive integrable functions $(F_n)_{n=1}^\infty$ and $(f_n)_{n = 1}^{\infty}$ satisfying \begin{itemize} \item[(a)] for any $ n \in \N,$ $f_n \le F_n$.
\item[(b)] $\lim_{n \to \infty} f_n = f, a.e.   \text{ and  } \lim_{n \to \infty} F_n = F, a.e.. $
\item[(c)] $  \lim_{n \to \infty} \int F_n dm = \int F dm < \infty$ .
\end{itemize}
Then $$ \lim_{n \to \infty} \int f_n dm= \int f dm.$$
\end{lem}

\begin{proof}
By Fatou's lemma, we have \begin{align*}  \int f d m  \le \liminf_{n \to \infty} \int f_n dm.  \end{align*} Again by Fatou's lemma applied on the positive sequence $F_n - f_n$, we have \begin{align*}  \int ( F - f) dm \le \liminf_{n \to \infty} \int  (F_n - f_n) dm    = \int F d m - \limsup_{n \to \infty} \int f_n dm. \end{align*}  Hence \begin{align*} \limsup_{n \to \infty} \int f_n dm \le \int f d m .  \end{align*} Combining these inequalities, we get the desired result.
\end{proof}

The following three kinds of auxiliary functions are considered: 
\begin{align} \label{g_I}g_I^{(n)} (u) =  \prod_{i = 1}^m \frac{(1 - \frac{w_i}{2n^2} - u)^2}{ (1 - u)^2 } , \, w_i \ne w_j ; \end{align}
\begin{align} \label{g_II} g_{II}^{(n)} (u) = \prod_{i  = 1}^m \frac{1 - u }{ 1 + \frac{v_i}{2n^2}  - u }, \, v_i \ne v_j ; \end{align}
\begin{align} \label{g_III}g_{III}^{(n)} (u) = \frac{(1-u)^2}{ (1 + \frac{v}{2n^2}  -u )^2} .\end{align}

Let $\widetilde{g}_I^{(n)} ( x )$ denote the function given by  $\widetilde{g}_I^{(n)} ( x ) = g^{(n)}_I ( \frac{n^2 x-1}{n^2 x+ 1}).$ Similarly, let $ \widetilde{g}_{II}^{(n)} ( x ) = g^{(n)}_{II} ( \frac{n^2 x-1}{n^2 x+ 1})$ and $\widetilde{g}_{III}^{(n)} ( x ) = g^{(n)}_{III} ( \frac{n^2 x-1}{n^2 x+ 1})$.

If $ \widetilde{g}^{(n)}  $ is one of the functions $\widetilde{g}_I^{(n)}$, $\widetilde{g}_{II}^{(n)}$ $\widetilde{g}_{III}^{(n)}$ , then there exists a positive function $ g: (0, \infty) \rightarrow [0, 1] $ and a constant $M > 0$  satisfying \begin{itemize}
\item[(a)] $\lim_{n \to \infty}\widetilde{g}^{(n)} (x) = g(x).$
\item[(b)] for any (finite or infinite)  sequence of positive real numbers $(x_i)_{i = 1}^N$, we have $$\prod_{ i = 1}^N \widetilde{g}^{(n)} (x_i) \le M \cdot \prod_{i = 1}^N g(x_i). $$
\item[(c)] for any sequence $\{(x_i^{(n)})_{1 \le i \le n }\}_{n=1}^\infty$ satisfying $x_i^{(n)} \ge 0$,  $$\lim_{n \to \infty} x_i^{(n)} = x_i \, \text{ and } \, \lim_{n \to \infty} \sum_{i = 1}^n x_i^{(n)}  = \sum_{i = 1}^\infty x_i< \infty,$$ we have $$ \lim_{n \to \infty} \prod_{ i = 1}^n \widetilde{g}^{(n)}  (x_i^{(n)}  ) = \prod_{ i = 1}^{\infty} g(x_i). $$
\end{itemize} The limiting functions are \begin{align}\label{asym-g_I} g_I (x)   = \prod_{i = 1}^m (1 - \frac{w_i }{4} x ) ^2  & \sim  1 - \left(\sum_{i } \frac{w_i}{2}\right) x , \text{ as } x \to 0+ ; \\ \label{asym-g_II}g_{II}(x)  = \prod_{i = 1}^m \frac{4}{4 + v_i x } & \sim 1 - \left(\sum_i \frac{v_i}{4} \right)x , \text{ as } x \to 0+  ; \\ \label{asym-g_III} g_{III}(x)  = \left(\frac{4}{4 + v x }\right)^m &  \sim 1 - m \cdot  \frac{v}{4} x , \text{ as } x \to 0+ . \end{align}

\begin{prop}\label{abstract-result}
Assume we are in one of the following situations: \begin{itemize}  \item [I.] $g^{(n)} = g_I^{(n)}$ and $ g = g_I$ with $ s- 2m > -1$, \item[II.] $ g^{(n)} = g_{II}^{(n)}$ and $g = g_{II}$ with $m + s > -1$, \item[III.] $ g^{(n)} = g_{III}^{(n)}$ and $g= g_{III} $ with $ m + s > -1$.  \end{itemize} Then the determinantal probability measure in    \eqref{multi-change} converges weakly in $\mathfrak{M}_{\textnormal{fin}} (\textnormal{Conf} ((0, + \infty)))$ to  \begin{align}  \mathbb{P}_{g}^{(s)} : = \frac{\Psi_{g} \mathbb{B}^{(s )}}{ \int_{ \textnormal{Conf} \left((0, \infty)\right)} \Psi_{g} d \mathbb{B}^{(s )} }.\end{align}
\end{prop}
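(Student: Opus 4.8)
\textbf{Proof strategy for Proposition \ref{abstract-result}.}

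The plan is to deduce the weak convergence of the finite determinantal measures $\mathbb{P}_{\widetilde{g}^{(n)}}^{(s,n)}$ from the convergence of the underlying determinantal measure $\mathbb{B}^{(s,n)}$ together with the control on the multiplicative functionals encoded in properties (a)--(c). First I would recall that, by the results quoted from \cite{Bufetov_inf-deter}, the point process $\mathbb{B}^{(s,n)}$ (the radial part of $\mu^{(s)}$ pushed forward by the change of variable \eqref{rescal2}) converges weakly in $\mathfrak{M}_{\mathrm{fin}}(\mathrm{Conf}((0,\infty)))$, or at least in the appropriate vague/local topology, to $\mathbb{B}^{(s)}$; this is the Heine--Mehler scaling limit producing the infinite Bessel point process $\widetilde{\mathbb{B}}^{(s)}$, transported by $y = 4/x$. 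Then the key observation is that, for a bounded continuous test functional $\Phi$ on $\mathrm{Conf}((0,\infty))$, one has
\begin{align*}
\int \Phi \, d\mathbb{P}_{\widetilde{g}^{(n)}}^{(s,n)} = \frac{\int \Phi \cdot \Psi_{\widetilde{g}^{(n)}} \, d\mathbb{B}^{(s,n)}}{\int \Psi_{\widetilde{g}^{(n)}} \, d\mathbb{B}^{(s,n)}},
\end{align*}
so it suffices to prove that $\int \Phi \cdot \Psi_{\widetilde{g}^{(n)}} \, d\mathbb{B}^{(s,n)} \to \int \Phi \cdot \Psi_{g} \, d\mathbb{B}^{(s)}$ for every bounded continuous $\Phi$ (including $\Phi \equiv 1$ for the denominator).

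The bridge from $\mathbb{B}^{(s,n)}$ to $\mathbb{B}^{(s)}$ at the level of these integrals is exactly what Lemma \ref{DC-lem} is designed for, applied on the measure space $(\mathrm{Conf}((0,\infty)), \mathbb{B}^{(s,n)})$ — or rather, one realizes all the $\mathbb{B}^{(s,n)}$ and $\mathbb{B}^{(s)}$ on a common probability space via the Skorokhod-type coupling coming from the convergence of configurations, so that a configuration $X^{(n)} = \{x_1^{(n)}, x_2^{(n)}, \dots\}$ converges particle-by-particle to $X = \{x_1, x_2, \dots\}$. Property (c) then gives $\Psi_{\widetilde{g}^{(n)}}(X^{(n)}) \to \Psi_{g}(X)$ pointwise along the coupling, property (b) gives the domination $\Psi_{\widetilde{g}^{(n)}}(X^{(n)}) \le M \Psi_{g}(X^{(n)})$ uniformly in $n$, and the convergence of the normalization $\int \Psi_{g} \, d\mathbb{B}^{(s,n)} \to \int \Psi_{g}\, d\mathbb{B}^{(s)}$ (a known fact about the Bessel point process, since $g$ is a fixed nice function) plays the role of hypothesis (c) in Lemma \ref{DC-lem}, providing the tightness/uniform-integrability that upgrades the a.e. convergence to convergence of integrals. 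Combining with $\Phi$ bounded continuous, one concludes the numerator converges; taking $\Phi \equiv 1$ handles the denominator, and since the denominator limit $\int \Psi_g \, d\mathbb{B}^{(s)}$ is finite and nonzero (guaranteed by the parameter constraints $s - 2m > -1$, resp. $m + s > -1$, which ensure the relevant weight $(1-u)^s g^{(n)}(u)$ has finite moments and the limiting multiplicative functional is integrable against $\mathbb{B}^{(s)}$), the ratio converges as desired.

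The main obstacle, and the step requiring real care, is the justification that one may pass from $\mathbb{B}^{(s,n)}$ to $\mathbb{B}^{(s)}$ inside $\int \Psi_{\widetilde g^{(n)}}\,d(\cdot)$ when $s \le -1$: here $\mathbb{B}^{(s)}$ is an \emph{infinite} determinantal measure, the functionals $\Psi_{\widetilde g^{(n)}}$ are only bounded by $1$ (not summably small) near $x = \infty$ in the original variable, and the limiting normalization $\int \Psi_g\, d\mathbb{B}^{(s)}$ must be shown finite — this is precisely why the hypotheses on $m$ relative to $s$ appear, and why the asymptotics \eqref{asym-g_I}--\eqref{asym-g_III} of $g$ near $x = 0+$ (equivalently, the behavior of $\widetilde g^{(n)}$ near the hard edge, which in the $u$-variable is the regularization of the singular weight $(1-u)^s$) matter: they must be strong enough to tame the $\sum x_i \le \gamma$ constraint defining $\Omega_P^0$. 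Technically I would handle this by first establishing the convergence for the finite-$n$ truncations and the finite Pickrell case $s > -1$ (where everything is a genuine probability measure and Lemma \ref{DC-lem} applies directly on configuration space), and then controlling the infinite case by the multiplicative-functional description of $\mathbb{B}^{(s)}$ from \cite{Bufetov_inf-deter}, using property (b) to dominate uniformly and the explicit integrability of $\Psi_g$ against the Bessel process to get hypothesis (c) of Lemma \ref{DC-lem}. The verification of properties (a)--(c) themselves for each of $g_I, g_{II}, g_{III}$ is elementary (the products telescope against the $(1-u)^s$ or $(1-u)^{-2}$ factors and the scaling \eqref{rescal2} is explicit), so I would relegate it to a short separate computation rather than belabor it in the main argument.
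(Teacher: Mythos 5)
Your strategy is essentially the paper's: the ``common probability space'' you invoke is realized concretely as $(\mathrm{Mat}_{\mathrm{reg}}(\N,\C),\mu^{(s)})$, where $\mathfrak{r}^{(n)}(z)\to\mathfrak{r}^{(\infty)}(z)$ provides exactly the particle-by-particle coupling you describe; properties (a)--(c) of $\widetilde g^{(n)}$ then give the pointwise convergence and the domination $\Psi_{\widetilde g^{(n)}}(\mathfrak{r}^{(n)}(z))\le M\Psi_g(\mathfrak{r}^{(n)}(z))$, and Lemma \ref{DC-lem} reduces everything to the single statement \eqref{wanted}, i.e.\ $\int\Psi_g(\mathfrak{r}^{(n)}(z))\,d\mu^{(s)}\to\int\Psi_g(\mathfrak{r}^{(\infty)}(z))\,d\mu^{(s)}$. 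For $s>-1$ this is dominated convergence, as you say.

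The one place where your sketch is genuinely underspecified is the infinite case $s\le-1$: you first call the convergence of the normalization ``a known fact about the Bessel point process,'' then correctly walk that back, but you never say how to actually obtain the convergence of the \emph{un-normalized} integral. The machinery of \cite{Bufetov_inf-deter} (after verifying the trace condition \eqref{UI-condition} via the asymptotics of $g$ near $0$ and the uniform estimates on the Jacobi kernels) delivers only the weak convergence of the \emph{normalized} measures $\Psi_g\mathbb{B}^{(s,n)}/\int\Psi_g\,d\mathbb{B}^{(s,n)}$, i.e.\ convergence of ratios \eqref{tight}, which by itself does not determine the limit of $\int\Psi_g\,d\mathbb{B}^{(s,n)}$. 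The missing idea is the bootstrap: one takes a single positive bounded continuous $f$ for which $\int f(\mathfrak{r}^{(n)}(z))\,d\mu^{(s)}\to\int f(\mathfrak{r}^{(\infty)}(z))\,d\mu^{(s)}$ is already known (Lemma 1.14 of \cite{Bufetov_inf-deter}), applies Lemma \ref{DC-lem} a second time to get \eqref{fg}, and only then combines \eqref{tight} with \eqref{fg} to extract \eqref{wanted}. Without some such anchoring step your argument is circular, since the convergence of the normalization is precisely what you are using Lemma \ref{DC-lem} to prove.
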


\begin{proof}
We will use the notation in \cite{Bufetov_inf-deter}, where,  following \cite{BO-infinite-matrices}, it is proved that the measure $\mu^{(s)}$ is  supported on the subset $\text{Mat}_{\text{reg}} (\N, \C)$ for any $s \in \R$. By the remarks preceding the proposition, for any $ z \in \text{Mat}_{\text{reg}} (\N, \C)$, we have  $$\lim_{n \to \infty} \Psi_{\widetilde{g}^{(n)}} ( \mathfrak{r}^{(n)} (z) )  = \Psi_g(\mathfrak{r}^{\infty} (z)), $$ and $$\Psi_{\widetilde{g}^{(n)}} (\mathfrak{r}^{(n)} (z) ) \le  M \cdot \Psi_{g} (\mathfrak{r}^{(n)} (z) ). $$   Now take any bounded and continuous  function  $f$  on $\text{Conf}((0,  \infty))$, we have \begin{align*} \int f(X) d \mathbb{P}_{\widetilde{g}^{(n)}}^{(s,n)} (X)   = \frac{ \int_{\text{Mat}_{\text{reg}}(\N, \C)} f(\mathfrak{r}^{(n)} (z))  \Psi_{\widetilde{g}^{(n)}} (\mathfrak{r}^{(n)} (z) )  d \mu^{(s)} (z)}{\int_{\text{Mat}_{\text{reg}}(\N, \C)}  \Psi_{\widetilde{g}^{(n)}} (\mathfrak{r}^{(n)} (z) )  d \mu^{(s)} (z) }. \end{align*} By Lemma \ref{DC-lem}, it suffices to show that \begin{align}\label{wanted} \lim_{n \to \infty}\int_{\text{Mat}(\N, \C)}  \Psi_{g} (\mathfrak{r}^{(n)} (z) )  d \mu^{(s)} (z)  = \int_{\text{Mat}(\N, \C)}  \Psi_{g} (\mathfrak{r}^{(\infty)} (z) )  d \mu^{(s)} (z) . \end{align}

If $s> -1$, the measure $\mu^{(s)}$ is a probability measure, by dominated convergence theorem, the equality \eqref{wanted} holds.

If $s \le -1$, the measure $\mu^{(s)}$ is infinite. The radial part of $\mu^{(s)}$ is an infinite determinantal process which corresponds to a finite-rank perturbation of determinantal probability measures as described in \S 5.2 in \cite{Bufetov_inf-deter}.  By using the asympotic formulae \eqref{asym-g_I}, \eqref{asym-g_II} and \eqref{asym-g_III} respectively in these three cases, we can check that the conditions of  Proposition 3.6 in \cite{Bufetov_inf-deter} are satisfied, for instance, let us check the following condition \begin{align}\label{UI-condition}  \lim_{n \to \infty} \tr \sqrt{1 - g} \Pi^{(s, n)} \sqrt{1-g} = \tr \sqrt{1- g} \Pi^{(s)} \sqrt{1-g},\end{align}  where $\Pi^{(s, n)} $ is the orthogonal projection onto the subspace $L^{(s + 2 n_s, n - n_s)}$ described in \S 5.2.1 in \cite{Bufetov_inf-deter}.  Combining the estimates given in Proposition 5.11 and Proposition 5.13 in \cite{Bufetov_inf-deter},  the integrands appeared in $$\tr \sqrt{1-g} \Pi^{(s,n)} \sqrt{1-g}$$ are uniformly integrable, hence by the Heine-Mehler classical asymptotics, the equality \eqref{UI-condition} indeed holds.   Now by Corollary 3.7 in \cite{Bufetov_inf-deter}, we have   \begin{align*} \frac{\Psi_g \mathbb{B}^{(s,n)}}{ \int_{\textnormal{Conf}\big( (0,  \infty) \big)} \Psi_g  d \mathbb{B}^{(s,n)}} \rightarrow  \frac{\Psi_g \mathbb{B}^{(s)}}{ \int_{\textnormal{Conf}\big( (0,  \infty) \big)} \Psi_g  d \mathbb{B}^{(s)}} . \end{align*} It follows that \begin{align}\label{tight} \begin{split} &   \lim_{n \to \infty}\frac{ \int_{\text{Mat}(\N, \C)} f(\mathfrak{r}^{(n)} (z))  \Psi_{g} (\mathfrak{r}^{(n)} (z) )  d \mu^{(s)} (z)}{\int_{\text{Mat}(\N, \C)}  \Psi_{g} (\mathfrak{r}^{(n)} (z) )  d \mu^{(s)} (z) }  \\  = &  \frac{ \int_{\text{Mat}(\N, \C)} f(\mathfrak{r}^{(\infty)} (z))  \Psi_{g} (\mathfrak{r}^{(\infty)} (z) )  d \mu^{(s)} (z)}{\int_{\text{Mat}(\N, \C)}  \Psi_{g} (\mathfrak{r}^{(\infty)} (z) )  d \mu^{(s)} (z) }. \end{split}   \end{align}  Moreover, by Lemma 1.14 in \cite{Bufetov_inf-deter}, there exists a positive bounded continuous function $f$ such that  \begin{align*}   \lim_{n \to \infty} \int_{\text{Mat}(\N, \C)} f(\mathfrak{r}^{(n)} (z))    d \mu^{(s)} (z) =   \int_{\text{Mat}(\N, \C)} f(\mathfrak{r}^{(\infty)} (z))   d \mu^{(s)} (z).   \end{align*} Again by Lemma \ref{DC-lem}, we have   \begin{align}\label{fg} \begin{split} &   \lim_{n \to \infty} \int_{\text{Mat}(\N, \C)} f(\mathfrak{r}^{(n)} (z))  \Psi_{g} (\mathfrak{r}^{(n)} (z) )  d \mu^{(s)} (z) \\  = &  \int_{\text{Mat}(\N, \C)} f(\mathfrak{r}^{(\infty)} (z))  \Psi_{g} (\mathfrak{r}^{(\infty)} (z) )  d \mu^{(s)} (z).\end{split}  \end{align}  Finally,  \eqref{wanted} follows from \eqref{tight} and \eqref{fg}, as desired.
\end{proof}

\begin{rem}
Note that \begin{align*}  \frac{n^2 x - 1}{ n^2 x + 1} = 1 - \frac{4/x}{2n^2 + 2/x} \sim 1 - \frac{4/x}{2n^2}.  \end{align*} Thus under change of variable $y = 4/x$, in the sequel, we only consider the scaling regimes of type $$x = 1 - \frac{z}{2n^2}. $$
\end{rem}

\section{Scaling Limits of Christoffel-Uvarov deformations of Jacobi Orthogonal Polynomial Ensembles.}

In this section, we will calculate explicitly the kernels for the determinatal probabilities  $\mathbb{P}_g^{(s)}$ given in Proposition \ref{abstract-result}. For avoiding extra notation, we mention here that in the sequel, in case I the $s$ corresponds to $s - 2m$ in Proposition \ref{abstract-result}, in cases II and III, it corresponds to $s + m $ in Propostion \ref{abstract-result}. For the case III, we give the result only for $ m = 2$.

 Observe that in the new coordinate $x= \rho (y)$, the kernel $K(x_1, x_2)$ for a locally trace class operator on $L^2(\R_+)$ reduces to $$\sqrt{\rho'(y_1) \rho'(y_2)} K(\rho(y_1), \rho(y_2)).$$

\subsection{Explicit Kernels for Scaling Limit: Case I}
Let $s > -1$. Consider a sequence $\xi^{(n)} = (\xi_1^{(n)}, \cdots, \xi_m^{(n)})$ of $m$-tuples of distinct real numbers in $(-1, 1)$. Let $w_s^{[\xi^{(n)}]}$ be the weight on $(-1, 1)$ given by   $$w_s^{[\xi^{(n)}]}(t)  = \prod_{i =1}^m (\xi_i^{(n)} - t)^2 \cdot w_s(t) =  \prod_{i =1}^m (\xi_i^{(n)} - t)^2 \cdot ( 1 - t)^s .$$

 Let $K_n^{[s, \xi^{(n)}]}(x_1, x_2)$ denote the $n$-th Christoffel-Darboux kernel for the weight $w_s^{[\xi^{(n)}]}$. The aim of this section is to establish the scaling limit of $K_n^{[s, \xi^{(n)}]}(x_1, x_2)$ in the following regime: \begin{align}\label{regime-case1}   \begin{split} \xi^{(n)}_i = 1 - \frac{w_i}{2 n^2}, 1 \le i \le m, &  \text{ $w_i > 0$ are all distinct;}  \\    x_i^{(n)}  = 1 - \frac{z_i}{2n^2}, & \quad z_i > 0, i = 1, 2. \end{split}
\end{align}

\subsubsection{Explicit formulae for orthogonal polynomials and Christoffel-Darboux kernels. }   

Let $ ( \pi_j^{[s, \xi^{(n)}]})_{j \ge 0} $ denote the system of  monic orthogonal polynomials associated with the weights $w_s^{[\xi^{(n)}]}$. To simplify notation, if there is no confusion, we denote   $  \pi_j^{[s, \xi^{(n)}]}$ by $\pi_j^{(n)}$.

 The monic polynomials $\pi_j^{(n)}$'s are given by the Christoffel formula (\cite[Thm 2.5.]{Szego-OP}):  \begin{align*}  \pi_j^{(n)}(t) = \frac{1}{\prod_{i = 1}^m (\xi^{(n)}_i-t)^2} \cdot \frac{D_j^{(n)} (t) }{ k_{j+2m}^{(s)} \cdot \delta_j^{(n)}},\end{align*} where

$$ D_j^{(n)} (t)  =   \left | \begin{array}{cccc} P_j^{(s)} (\xi^{(n)}_1) & P_{j+1}^{(s)}(\xi_1^{(n)}) & \cdots &  P_{j+2 m}^{(s)}(\xi^{(n)}_1) \vspace{3mm}

\\ \vdots & \vdots &  & \vdots

\\ P_j^{(s)} (\xi^{(n)}_m) & P_{j+1}^{(s)}(\xi_m^{(n)}) & \cdots &  P_{j+2 m}^{(s)}(\xi^{(n)}_m) \vspace{3mm}

\\ \dot{ P}_j^{(s)} (\xi^{(n)}_1) & \dot{P}_{j+1}^{(s)}(\xi_1^{(n)}) & \cdots &  \dot{P}_{j+2 m}^{(s)}(\xi^{(n)}_1) \vspace{3mm}

\\  \vdots & \vdots &  & \vdots

\\ \dot{P}_j^{(s)} (\xi^{(n)}_m) & \dot{P}_{j+1}^{(s)}(\xi_m^{(n)}) & \cdots &  \dot{P}_{j+2 m}^{(s)}(\xi^{(n)}_m) \vspace{3mm}

\\ P_j^{(s)} (t) & P_{j+1}^{(s)}(t) & \cdots &  P_{j+2 m}^{(s)}(t) \end{array} \right |;$$ and
$$\delta_j^{(n)}  =   \left | \begin{array}{cccc} P_j^{(s)} (\xi^{(n)}_1) & P_{j+1}^{(s)}(\xi_1^{(n)}) & \cdots &  P_{j+2 m-1}^{(s)}(\xi^{(n)}_1) \vspace{3mm}

\\ \vdots & \vdots &  & \vdots

\\ P_j^{(s)} (\xi^{(n)}_m) & P_{j+1}^{(s)}(\xi_m^{(n)}) & \cdots &  P_{j+2 m-1}^{(s)}(\xi^{(n)}_m) \vspace{3mm}

\\ \dot{ P}_j^{(s)} (\xi^{(n)}_1) & \dot{P}_{j+1}^{(s)}(\xi_1^{(n)}) & \cdots &  \dot{P}_{j+2 m -1}^{(s)}(\xi^{(n)}_1) \vspace{3mm}

 \\  \vdots & \vdots &  & \vdots

\\ \dot{P}_j^{(s)} (\xi^{(n)}_m) & \dot{P}_{j+1}^{(s)}(\xi_m^{(n)}) & \cdots &  \dot{P}_{j+2 m - 1}^{(s)}(\xi^{(n)}_m)  \end{array} \right |. $$

\begin{defn} Let $  h_j^{[s, \xi^{(n)}]} = \int_{-1}^1 \Big\{ \pi_j^{(n)} (t) \Big\}^2 w_s^{[\xi^{(n)}]}(t)dt  .$
\end{defn}

\begin{prop}
For any $j \ge 0$, we have
$$h_{j}^{[s, \xi^{(n)}]} = \frac{h_{j}^{(s)}}{k_{j}^{(s)} k_{j+2m}^{(s)}}\cdot \frac{\delta_{j+1}^{(n)}}{\delta_{j}^{(n)}}.$$
\end{prop}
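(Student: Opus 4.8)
The plan is to evaluate $h_j^{[s,\xi^{(n)}]}=\int_{-1}^1\{\pi_j^{(n)}(t)\}^2 w_s^{[\xi^{(n)}]}(t)\,dt$ directly from the Christoffel formula, substituting that formula into \emph{one} of the two factors $\pi_j^{(n)}(t)$. Since $w_s^{[\xi^{(n)}]}(t)=\prod_{i=1}^m(\xi_i^{(n)}-t)^2\,w_s(t)$, the factor $\prod_i(\xi_i^{(n)}-t)^2$ appearing in the denominator of the Christoffel formula for $\pi_j^{(n)}(t)$ cancels against the corresponding factor in $w_s^{[\xi^{(n)}]}$, which reduces the integral to
\[
h_j^{[s,\xi^{(n)}]}=\frac{1}{k_{j+2m}^{(s)}\,\delta_j^{(n)}}\int_{-1}^1 \pi_j^{(n)}(t)\,D_j^{(n)}(t)\,w_s(t)\,dt .
\]

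Next I would Laplace-expand $D_j^{(n)}(t)$ along its last row, whose entries are $P_j^{(s)}(t),P_{j+1}^{(s)}(t),\dots,P_{j+2m}^{(s)}(t)$. This expresses $D_j^{(n)}(t)=\sum_{k=0}^{2m}c_k\,P_{j+k}^{(s)}(t)$ with cofactors $c_k$ independent of $t$. Two of these are all that matter: the coefficient of $P_{j+2m}^{(s)}(t)$ is the minor obtained by deleting the last row and last column, which is exactly $\delta_j^{(n)}$ (consistently, this re-verifies that $\pi_j^{(n)}$ is monic of degree $j$), while the coefficient $c_0$ of $P_j^{(s)}(t)$ equals $(-1)^{(2m+1)+1}$ times the minor obtained by deleting the last row and first column; that minor is the determinant built from the columns $P_{j+1}^{(s)},\dots,P_{j+2m}^{(s)}$ evaluated at the $\xi_i^{(n)}$ and at the first derivatives, which is precisely $\delta_{j+1}^{(n)}$. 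Since $(-1)^{2m+2}=1$, we get $c_0=\delta_{j+1}^{(n)}$.

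Finally I would invoke the orthogonality of the Jacobi polynomials $P_k^{(s)}$ with respect to the weight $w_s$. Because $\deg\pi_j^{(n)}=j<j+k$ for every $k\ge 1$, all integrals $\int \pi_j^{(n)}P_{j+k}^{(s)}w_s=0$ for $k\ge 1$; for $k=0$, writing $\pi_j^{(n)}=\tfrac{1}{k_j^{(s)}}P_j^{(s)}+(\text{lower degree})$ gives $\int \pi_j^{(n)}P_j^{(s)}w_s=h_j^{(s)}/k_j^{(s)}$. Substituting into the displayed formula above yields
\[
h_j^{[s,\xi^{(n)}]}=\frac{\delta_{j+1}^{(n)}}{k_{j+2m}^{(s)}\,\delta_j^{(n)}}\cdot\frac{h_j^{(s)}}{k_j^{(s)}}
=\frac{h_j^{(s)}}{k_j^{(s)}\,k_{j+2m}^{(s)}}\cdot\frac{\delta_{j+1}^{(n)}}{\delta_j^{(n)}},
\]
which is the asserted identity.

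\textbf{Main obstacle.} The one genuinely delicate point is the cofactor/sign bookkeeping in the Laplace expansion: identifying $c_0=+\delta_{j+1}^{(n)}$ (rather than $-\delta_{j+1}^{(n)}$), and checking that the relabelled column block in the definition of $\delta_{j+1}^{(n)}$ is literally the minor of $D_j^{(n)}$ obtained by striking the last row and first column. Everything else---the cancellation of $\prod_i(\xi_i^{(n)}-t)^2$ and the orthogonality relations---is routine. One should also record the tacit hypothesis $\delta_j^{(n)}\neq0$, which holds because $w_s^{[\xi^{(n)}]}$ is a bona fide positive weight on $(-1,1)$ (the $\xi_i^{(n)}$ being distinct interior points), so the confluent Christoffel formula applies.
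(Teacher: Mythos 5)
Your argument is correct and is essentially the same as the paper's: substitute the Christoffel formula into one factor of $\pi_j^{(n)}$, cancel $\prod_i(\xi_i^{(n)}-t)^2$ against the weight, identify the coefficient of $P_j^{(s)}$ in $D_j^{(n)}$ as $\delta_{j+1}^{(n)}$, and finish by orthogonality. The paper simply asserts that coefficient identification without the Laplace-expansion sign check you carry out, so your write-up is a more detailed version of the same proof.
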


\begin{proof}
By orthogonality, for any $\ell \ge 1$, we have $$\int_{-1}^1 P^{(s)}_{j+u}(t) \pi_j^{(n)}(t) w_s(t) dt = 0.$$ Note that $$D_j^{(n)}  = \delta_{j+1}^{(n)} P_j^{(s)}(t) +  \text{ linear combination of $ P_{j + 1}^{(s)}, \cdots, P_{j+2m}^{(s)}$.} $$ Hence
\begin{align*}  h_{j}^{[s, \xi^{(n)}]}  & =   \frac{1}{k_{j+ 2m}^{(s)}  \delta_{j}^{(n)}} \int D_{j}^{(n)} (t) \pi_j^{(n)} (t) w_s(t) dt   \\ & =   \frac{1}{k_{j + 2m}^{(s)} \delta_{j}^{(n)}} \int \delta_{j+1}^{(n)}  P_{j}^{(s)}  (t) \pi_j^{(n)} (t) w_s(t) dt  \\ & =  \frac{\delta_{j+1}^{(n)} }{k_{j + 2m}^{(s)}  \delta_{j}^{(n)}} \int  \Big\{P_{j}^{(s)}  (t) \Big\}^2 \frac{1}{k_j^{(s)}}w_s(t) dt  \\ & =  \frac{h_{j}^{(s)}}{k_{j}^{(s)} k_{j+2m}^{(s)}}\cdot \frac{\delta_{j+1}^{(n)}}{\delta_{j}^{(n)}}.\end{align*}
\end{proof}

By the Christoffel-Darboux formula (cf. \cite[Thm 3.2.2]{Szego-OP}), we have: \begin{align*}    & K_n^{[s, \xi^{(n)}]} (x_1^{(n)}, x_2^{(n)})   =    \sqrt{w_s^{[\xi^{(n)}]} (x_1^{(n)}) w_s^{[\xi^{(n)}]} (x_2^{(n)}) }\cdot \sum_{ j = 0}^{n-1} \frac{\pi_j^{(n)} (x_1^{(n)}) \cdot \pi_j^{(n)}(x_2^{(n)}) }{h_j^{[s, \xi^{(n)}]}}  \\ & =   \frac{\sqrt{w_s^{[\xi^{(n)}]} (x_1^{(n)}) w_s^{[\xi^{(n)}]} (x_2^{(n)}) } }{h_{n-1}^{[s, \xi^{(n)}]}}  \cdot \frac{\pi_n^{(n)}(x_1^{(n)}) \cdot \pi_{n-1}^{(n)} (x_2^{(n)}) - \pi_n^{(n)}(x_2^{(n)}) \cdot \pi_{n-1}^{(n)} (x_1^{(n)})}{ x_1^{(n)} - x_2^{(n)}}. \end{align*}

After change of variables $x_i^{(n)} = 1 - \frac{z_i }{2n^2}, z_i  \in [ 0, 4n^2], i = 1, 2$, and let $\xi^{(n)}$ takes the form as in the regime \eqref{regime-case1}, these kernels can be written as: \begin{align}\label{C-D-good} & \widetilde{K}_n^{[s, \xi^{(n)}]} (z_1, z_2)  =   \frac{1}{2 n^2} K_n^{[s, \xi^{(n)}]} \Big( 1 - \frac{z_1}{2 n^2}, 1 - \frac{z_2}{2 n^2}\Big) \\ & = \frac{(z_1 z_2)^{\frac{s}{2}}}{\left| \prod_{i=1}^m (z_1 - w_i)  (z_2 - w_i )\right|} \cdot S_n(z_1, z_2), \nonumber  \end{align} where \begin{align}\label{S-good-1} S_n(z_1, z_2) =  (2n^2)^{2m -s -1} \sum_{j = 0}^{n-1} \frac{D_j^{(n)} (1- \frac{z_1}{2n^2} ) D_j^{(n)} (1- \frac{z_2}{2n^2}   )}{ \frac{h_j^{(s)} k_{j+2m}^{(s)} }{ k_j^{(s)} } \delta_j^{(n)} \delta_{j+1}^{(n)} }, \end{align} or equivalently \begin{align}\label{S-good-2} &  \quad S_n(z_1, z_2)     =  \frac{(2n^2)^{2m-s}}{  \frac{h_{n-1}^{(s)} k_{n+2m}^{(s)} }{ k_{n-1}^{(s)} } \Big[ \delta_n^{(n)} \Big]^2 } \times  \\ & \times \frac{  D_n^{(n)}(1- \frac{z_1}{2n^2} ) \cdot D_{n-1}^{(n)} ( 1- \frac{z_2}{2n^2}  ) - D_n^{(n)}(1- \frac{z_2}{2n^2} ) \cdot D_{n-1}^{(n)} ( 1- \frac{z_1}{2n^2}  ) }{z_2 - z_1 }. \nonumber  \end{align}

\subsubsection{Scaling limits.} To obtain the scaling limit of the Christoffel-Darboux kernels $\widetilde{K}_n^{[s, \xi^{(n)}]} (z_1, z_2)$, we shall investigate the asymptotics of the formulae \eqref{S-good-1} or \eqref{S-good-2}. These two representations \eqref{S-good-1} and \eqref{S-good-2} will yield different representations of the scaling limit: an integrable form and an integral form.

The following formulae are well-known (\cite[p.63, p.68]{Szego-OP}): \begin{align}\label{leading-norm} k_j^{(s)} = \frac{1}{2^j \cdot j !} \frac{\Gamma(2 j+s + 1)}{\Gamma( j + s + 1)}, \quad h_j^{(s)} = \frac{2^{s+1}}{2 j + s +1}. \end{align} The following lemma will be used frequently in the sequel.
\begin{lem}
Let  $ p\in \Z$, then $$\lim_{n \to \infty} \frac{k_{\kappa_n + p}^{(s)}}{ k_{\kappa_n}^{(s)}}  = 2^p.$$
\end{lem}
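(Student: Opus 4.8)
The plan is to use the explicit formula \eqref{leading-norm} for the leading coefficients, namely
\[
k_j^{(s)} = \frac{1}{2^j \cdot j!} \cdot \frac{\Gamma(2j+s+1)}{\Gamma(j+s+1)},
\]
together with standard asymptotics for the Gamma function. First I would write the ratio $k_{\kappa_n+p}^{(s)}/k_{\kappa_n}^{(s)}$ explicitly as a product of four factors: the power of $2$, the ratio of factorials, and the two ratios of Gamma values,
\[
\frac{k_{\kappa_n+p}^{(s)}}{k_{\kappa_n}^{(s)}} = 2^{-p} \cdot \frac{\kappa_n!}{(\kappa_n+p)!} \cdot \frac{\Gamma(2\kappa_n+2p+s+1)}{\Gamma(2\kappa_n+s+1)} \cdot \frac{\Gamma(\kappa_n+s+1)}{\Gamma(\kappa_n+p+s+1)}.
\]
(For $p<0$ the roles of numerator and denominator in the factorial and Gamma ratios simply swap, so it suffices to treat $p \ge 0$ and then note the reciprocal relation $k_{\kappa_n}^{(s)}/k_{\kappa_n-|p|}^{(s)}$ is the $p\mapsto |p|$ case shifted.)

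Next I would estimate each factor as $n\to\infty$, using that $\kappa_n \to \infty$ since $\kappa_n/n \to \kappa>0$. The factorial ratio $\kappa_n!/(\kappa_n+p)!$ behaves like $\kappa_n^{-p}$; the ratio $\Gamma(\kappa_n+s+1)/\Gamma(\kappa_n+p+s+1)$ behaves like $\kappa_n^{-p}$ as well, by the elementary estimate $\Gamma(x+a)/\Gamma(x+b) \sim x^{a-b}$ as $x\to\infty$; and the ratio $\Gamma(2\kappa_n+2p+s+1)/\Gamma(2\kappa_n+s+1)$ behaves like $(2\kappa_n)^{2p}$. Multiplying these together with the prefactor $2^{-p}$ gives
\[
\frac{k_{\kappa_n+p}^{(s)}}{k_{\kappa_n}^{(s)}} \sim 2^{-p} \cdot \kappa_n^{-p} \cdot (2\kappa_n)^{2p} \cdot \kappa_n^{-p} = 2^{-p} \cdot 2^{2p} = 2^p,
\]
and all the powers of $\kappa_n$ cancel, leaving exactly $2^p$ in the limit. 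The case $p<0$ follows by the same computation applied to the reciprocal, or directly since the estimate $\Gamma(x+a)/\Gamma(x+b)\sim x^{a-b}$ holds for arbitrary real $a,b$.

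I do not expect any serious obstacle here: this is a routine asymptotic computation once \eqref{leading-norm} is in hand. The only point requiring a little care is to make sure the three ``$\sim\kappa_n^{\pm p}$'' estimates are invoked with the correct exponents so that the cancellation is exact rather than merely up to a constant — this is where using the clean statement $\Gamma(x+a)/\Gamma(x+b)\sim x^{a-b}$ (valid since $2\kappa_n+2p+s+1$ and $2\kappa_n+s+1$ differ by the constant $2p$, and similarly in the other ratios) keeps the bookkeeping transparent. No uniformity in $p$ is claimed, so one fixed integer $p$ at a time suffices.
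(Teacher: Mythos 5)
Your computation is correct and is essentially the paper's own argument: the authors likewise invoke the explicit formula \eqref{leading-norm} together with the standard asymptotic $\lim_{n\to\infty}\Gamma(n+a)/(n^a\Gamma(n))=1$, and the exponent bookkeeping you carry out ($2^{-p}\cdot\kappa_n^{-p}\cdot(2\kappa_n)^{2p}\cdot\kappa_n^{-p}=2^p$) is exactly the intended cancellation. Nothing further is needed.
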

\begin{proof}
It is an easy consequence of \eqref{leading-norm} and the Stirling's approximation formula for Gamma functions,  here we can also use the following convenient formula:  $$ \text{ for all } a \in \R, \quad\lim_{n \to \infty} \frac{\Gamma( n + a)}{n^a \Gamma(n)} = 1 . $$
\end{proof}

\begin{prop}\label{C}
If  the sequence $\xi^{(n)}$ satisfies  \eqref{regime-case1} ,  then $$ \lim_{n \to \infty} n^{2m^2 -2sm-3m} \delta_{\kappa_n}^{(n)} = \frac{2^{2ms}}{(w_1 \cdots w_m)^{1+s}} C_I^{(s, w)}(\kappa), $$ where $$C_I^{(s,w)}(\kappa) =W(J_{s, w_1}, \cdots, J_{s, w_m}, \widetilde{J}_{s+1, w_1}, \cdots \widetilde{J}_{s+1, w_m})(\kappa) $$ and $$J_{s, w_i} (\kappa) : = J_s(\kappa \sqrt{w_i}), \quad \widetilde{J}_{s+1, w_i} (\kappa)= \widetilde{J}_{s+1} (\kappa \sqrt{w_i}).$$ Moreover, the convergence is uniform as long as $\kappa$ is in a compact subset of $(0, \infty)$.








\end{prop}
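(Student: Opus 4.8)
The quantity $\delta_{\kappa_n}^{(n)}$ is the $2m\times 2m$ determinant whose rows are, in order, $\big(P^{(s)}_{j}(\xi_i^{(n)}),\dots,P^{(s)}_{j+2m-1}(\xi_i^{(n)})\big)$ for $i=1,\dots,m$ and then the same with $\dot P^{(s)}$ in place of $P^{(s)}$, evaluated at $j=\kappa_n$. The plan is to perform column operations to turn the raw values $P^{(s)}_{j},P^{(s)}_{j+1},\dots,P^{(s)}_{j+2m-1}$ into the higher differences $\Delta^{(s;\,0)}_{P,j},\Delta^{(s;\,1)}_{P,j},\dots,\Delta^{(s;\,2m-1)}_{P,j}$: since each $\Delta^{(s;\,\ell)}_{P,j}$ is a fixed unipotent integer combination of $P^{(s)}_j,\dots,P^{(s)}_{j+\ell}$, these column operations have determinant $1$ and leave $\delta^{(n)}_{\kappa_n}$ unchanged. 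After this the $(i,\ell)$ entry of the top block is $\Delta^{(s;\,\ell)}_{P,\kappa_n}(\xi_i^{(n)})$ and the $(i,\ell)$ entry of the bottom block is $\dot\Delta^{(s;\,\ell)}_{P,\kappa_n}(\xi_i^{(n)})$, for $\ell=0,\dots,2m-1$.

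Next I would insert the correct powers of $n$ column-by-column to prepare for taking the limit. Multiply the $\ell$-th column ($\ell=0,\dots,2m-1$) by $n^{\ell-s}$; this multiplies the whole determinant by $n^{\sum_{\ell=0}^{2m-1}(\ell-s)} = n^{m(2m-1)-2ms}=n^{2m^2-m-2ms}$. Now the top-block entries are $n^{\ell-s}\Delta^{(s;\,\ell)}_{P,\kappa_n}(\xi_i^{(n)})$, which by Proposition \ref{jacobi-asymp} (applied with $x^{(n)}=\xi_i^{(n)}=1-\tfrac{w_i}{2n^2}$, $z=w_i$, and $\kappa_n\to\kappa$) converge to $2^s w_i^{(\ell-s)/2}J_s^{(\ell)}(\kappa\sqrt{w_i})$, uniformly for $\kappa$ in a compact subset of $(0,\infty)$. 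The bottom-block entries $n^{\ell-s}\dot\Delta^{(s;\,\ell)}_{P,\kappa_n}(\xi_i^{(n)})$ are off by $n^{-2}$ from the normalization in Proposition \ref{der-asymp}, so I should instead have multiplied the bottom $m$ rows by an extra $n^2$; concretely, multiply rows $m+1,\dots,2m$ by $n^2$, contributing a further factor $n^{2m}$, after which Proposition \ref{der-asymp} gives convergence of these entries to $2^s w_i^{(\ell-s-2+2)/2}\widetilde J_{s+1}^{(\ell)}(\kappa\sqrt{w_i})=2^s w_i^{(\ell-s)/2}\widetilde J_{s+1}^{(\ell)}(\kappa\sqrt{w_i})$. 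The total power of $n$ accumulated is $n^{2m^2-m-2ms}\cdot n^{2m}=n^{2m^2+m-2ms}$; matching this against the claimed normalization $n^{2m^2-2sm-3m}$ forces an additional clean-up factor $n^{-4m}$, which I expect to come out of extracting a power of $2n^2$ (hence a power of $n^2$) from the fact that $\widetilde J_{s+1}(t)=tJ_{s+1}(t)$ and that the derivative asymptotics naturally live at scale $n^{-2}$ — I would track this bookkeeping carefully rather than guess.

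Having normalized, the entries of the resulting $2m\times 2m$ matrix converge entrywise (uniformly in $\kappa$ on compacts) to a matrix whose $i$-th row of the top block is $\big(J_s(\kappa\sqrt{w_i}),\, n^{-1}\text{-scaled derivatives}\big)$ — more precisely the $\ell$-th column limit is $2^s w_i^{(\ell-s)/2}\partial_\kappa^{\ell}$ of the appropriate Bessel-type function, and similarly for the bottom block with $\widetilde J_{s+1}$. Pulling the common factor $2^s$ out of every column (a factor $2^{2ms}$ total) and the factor $w_i^{-s/2}$ out of every row (top block contributing $\prod w_i^{-s/2}$, bottom block another $\prod w_i^{-s/2}$, together $\prod w_i^{-s}$), and absorbing the remaining $w_i^{\ell/2}$-type factors, the limiting determinant is exactly the Wronskian
\[
W(J_{s,w_1},\dots,J_{s,w_m},\widetilde J_{s+1,w_1},\dots,\widetilde J_{s+1,w_m})(\kappa)=C_I^{(s,w)}(\kappa),
\]
because differentiating $J_{s,w_i}(\kappa)=J_s(\kappa\sqrt{w_i})$ in $\kappa$ produces exactly the chain-rule factors $w_i^{1/2}$ that convert $\partial_\kappa^\ell$-columns into the $\partial_t^\ell$-evaluated-at-$\kappa\sqrt{w_i}$ entries appearing in the asymptotic formulae (up to powers of $w_i$ that I will have already factored out). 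Since convergence of each entry is uniform on compact $\kappa$-sets and the determinant is a polynomial in the entries, the convergence of $\delta_{\kappa_n}^{(n)}$ (suitably normalized) to $\tfrac{2^{2ms}}{(w_1\cdots w_m)^{1+s}}C_I^{(s,w)}(\kappa)$ is also uniform on compacts, giving the Proposition.

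\textbf{Main obstacle.} The delicate point is not the limit itself — that is a routine consequence of Propositions \ref{jacobi-asymp} and \ref{der-asymp} once the matrix is in difference form — but the exact accounting of the powers of $n$, of $2$, and of the $w_i$: the bottom (derivative) block scales with two extra powers of $n$ per row relative to the top block, the column normalizations $n^{\ell-s}$ interact with the $w_i^{(\ell-s)/2}$ prefactors, and one must verify that all the $w_i$-dependent prefactors reassemble precisely into $(w_1\cdots w_m)^{-(1+s)}$ times the clean Wronskian $C_I^{(s,w)}(\kappa)$. I would organize this by factoring columns first, then rows, keeping the chain-rule factor $\sqrt{w_i}$ (from $\partial_\kappa$ acting on $J_s(\kappa\sqrt{w_i})$) explicit so that it visibly cancels the $w_i^{\ell/2}$ in the column normalization, leaving a residual $w_i^{-s/2}$ per row from the top block and per row from the bottom block — hence $(w_1\cdots w_m)^{-s}$ — together with one further power $(w_1\cdots w_m)^{-1}$ that I expect to emerge from the $\widetilde J_{s+1}(t)=tJ_{s+1}(t)$ normalization in Proposition \ref{der-asymp} (the factor $u^{(-2+\ell-\alpha)/2}$ there carries an extra $u^{-1}$ compared to the $J_\alpha$ case). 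Getting all of this consistent with the stated exponent $2m^2-2sm-3m$ of $n$ is the real content of the computation.
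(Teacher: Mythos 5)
Your strategy is exactly the paper's: rewrite $\delta_{\kappa_n}^{(n)}$ in terms of higher differences by unipotent column operations, normalize by diagonal matrices acting on the columns and on the rows, and pass to the limit entrywise using Propositions \ref{jacobi-asymp} and \ref{der-asymp}. But your row normalization has the wrong sign, and this is precisely the bookkeeping you flag as unresolved. After multiplying the $\ell$-th column by $n^{\ell-s}$, the bottom-block entries are $n^{\ell-s}\dot\Delta^{(s;\,\ell)}_{P,\kappa_n}(\xi_i^{(n)})$, whereas Proposition \ref{der-asymp} controls $n^{\ell-s-2}\dot\Delta^{(s;\,\ell)}_{P,\kappa_n}$; so you must multiply the bottom $m$ rows by $n^{-2}$ each (the paper's $\mathrm{diag}(1,\dots,1,n^{-2},\dots,n^{-2})$), contributing $n^{-2m}$, not by $n^{+2}$ contributing $n^{2m}$. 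With your choice the bottom entries are $n^{4}$ times a convergent quantity and diverge, so your claimed entrywise limit $2^s w_i^{(\ell-s)/2}\widetilde J_{s+1}^{(\ell)}(\kappa\sqrt{w_i})$ is not what Proposition \ref{der-asymp} yields, and the phantom factor $n^{-4m}$ you then go hunting for does not exist: it is an artifact of the sign error.

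With the correct sign everything closes immediately. The total power of $n$ is $\sum_{\ell=0}^{2m-1}(\ell-s)-2m = 2m^2-m-2ms-2m=2m^2-3m-2ms$, matching the statement. The bottom entries converge to $2^s w_i^{(\ell-s-2)/2}\widetilde J_{s+1}^{(\ell)}(\kappa\sqrt{w_i})$; writing $J_s^{(\ell)}(\kappa\sqrt{w_i})=w_i^{-\ell/2}\,\tfrac{d^\ell}{d\kappa^\ell}J_{s,w_i}(\kappa)$ and likewise for $\widetilde J_{s+1}$, each top row factors out $2^s w_i^{-s/2}$ and each bottom row factors out $2^s w_i^{-(s+2)/2}$, giving $2^{2ms}(w_1\cdots w_m)^{-(1+s)}$ times the Wronskian $C_I^{(s,w)}(\kappa)$. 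In particular the extra $(w_1\cdots w_m)^{-1}$ you ``expect to emerge'' is exactly the $w_i^{-1}$ sitting in the exponent $(\ell-s-2)/2$, which your (incorrect) limit for the bottom block had already discarded, so your two accounts of where that factor comes from are mutually inconsistent. The uniformity on compact $\kappa$-sets then follows as you say, from the uniformity in Propositions \ref{jacobi-asymp} and \ref{der-asymp} and continuity of the determinant in its entries.
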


\begin{proof}
To simplify notation, we denote $\Delta_{P, n}^{(s; \, \ell)}$ by $\Delta_{P, n}^{[\ell]}$ in this proof. By the multi-linearity of the determinant on columns,  we have \begin{eqnarray*} \delta_{\kappa_n}^{(n)} =   \left | \begin{array}{cccc} \Delta_{P, \kappa_n}^{[0]} (\xi^{(n)}_1) & \Delta_{P, \kappa_n}^{[1]} (\xi^{(n)}_1)  & \cdots &  \Delta_{P, \kappa_n}^{[2m-1]} (\xi^{(n)}_1) \vspace{3mm}

\\ \vdots & \vdots &  & \vdots

\\  \Delta_{P, \kappa_n}^{[0]} (\xi^{(n)}_m) & \Delta_{P, \kappa_n}^{[1]} (\xi^{(n)}_m)  & \cdots &  \Delta_{P, \kappa_n}^{[2m-1]} (\xi^{(n)}_m) \vspace{3mm}

\\ \dot{\Delta}_{P, \kappa_n}^{[0]} (\xi^{(n)}_1) & \dot{\Delta}_{P, \kappa_n}^{[1]} (\xi^{(n)}_1)  & \cdots &  \dot{\Delta}_{P, \kappa_n}^{[2m-1]} (\xi^{(n)}_1) \vspace{3mm}

\\ \vdots & \vdots &  & \vdots

\\  \dot{\Delta}_{P, \kappa_n}^{[0]} (\xi^{(n)}_m) & \dot{\Delta}_{P, \kappa_n}^{[1]} (\xi^{(n)}_m)  & \cdots &  \dot{\Delta}_{P, \kappa_n}^{[2m-1]} (\xi^{(n)}_m)   \end{array} \right | .\end{eqnarray*} Multiplying the matrix used in the above formula on right by the diagonal matrix $\text{diag}( n^{-s}, n^{1-s}, \cdots, n^{2m-1-s})$ and on left by the diagonal matrix $\text{diag}(\underbrace{1, \cdots, 1}_{\text{$m$ terms }}, \underbrace{n^{-2}, \cdots, n^{-2}}_{\text{$m$ terms} })$ and taking determinant, we obtain that $n^{2m^2 -2sm-3m} \delta_{\kappa_n}^{(n)}$ equals to the following determinant $$ \left | \begin{array}{cccc} n^{-s}\Delta_{P, \kappa_n}^{[0]} (\xi^{(n)}_1) & n^{1-s} \Delta_{P, \kappa_n}^{[1]} (\xi^{(n)}_1)  & \cdots & n^{2m-1-s} \Delta_{P, \kappa_n}^{[2m-1]} (\xi^{(n)}_1) \vspace{3mm}  \\ \vdots & \vdots &  & \vdots \\  n^{-s} \Delta_{P, \kappa_n}^{[0]} (\xi^{(n)}_m) & n^{1-s} \Delta_{P, \kappa_n}^{[1]} (\xi^{(n)}_m)  & \cdots &  n^{2m-1-s}\Delta_{P, \kappa_n}^{[2m-1]} (\xi^{(n)}_m) \vspace{3mm} \\ n^{-2-s}\dot{\Delta}_{P, \kappa_n}^{[0]} (\xi^{(n)}_1) & n^{-1-s}\dot{\Delta}_{P, \kappa_n}^{[1]} (\xi^{(n)}_1)  & \cdots & n^{2m-3-s} \dot{\Delta}_{P, \kappa_n}^{[2m-1]} (\xi^{(n)}_1) \vspace{3mm}  \\ \vdots & \vdots &  & \vdots \\  n^{-2-s}\dot{\Delta}_{P, \kappa_n}^{[0]} (\xi^{(n)}_m) & n^{-1-s}\dot{\Delta}_{P, \kappa_n}^{[1]} (\xi^{(n)}_m)  & \cdots &  n^{2m-3-s}\dot{\Delta}_{P, \kappa_n}^{[2m-1]} (\xi^{(n)}_m)   \end{array} \right |.$$ Applying Propositions \ref{jacobi-asymp} and \ref{der-asymp}, we obtain the desired formula. The last statement follows from the uniform convergences in Propositions \ref{jacobi-asymp}, \ref{der-asymp}.
\end{proof}

\begin{prop}\label{A}
If the sequences $x_i^{(n)}$ and  $\xi^{(n)}$ satisfy \eqref{regime-case1} ,  then \begin{align*}  \lim_{n \to \infty}   n^{2m^2 - m -2ms-s}  D^{(n)}_{\kappa_n}(x_i^{(n)}) =  \frac{2^{2ms +s}}{(w_1 \cdots w_m)^{1+s}} z_i^{-\frac{s}{2}}\cdot  A_I^{(s, w)} (\kappa, z_i), \end{align*} where $$A_I^{(s, w)}(\kappa, z_i) = W\Big(J_{s, w_1}, \cdots, J_{s, w_m}, \widetilde{J}_{s+1, w_1}, \cdots, \widetilde{J}_{s+1, w_m}, J_{s, z_i}\Big)(\kappa).$$
Moreover, the convergence is uniform as long as $\kappa$ is in a compact subset of $(0, \infty)$.








\end{prop}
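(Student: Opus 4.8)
The plan is to mimic almost verbatim the proof of Proposition~\ref{C}, the only new feature being the presence of the extra column and row coming from the last polynomial $P_{j+2m}^{(s)}(t)$ evaluated at the running point $x_i^{(n)}$. First I would rewrite $D_{\kappa_n}^{(n)}(x_i^{(n)})$ in terms of higher differences: by multilinearity of the determinant in its columns one replaces each column $(P_{j}^{(s)}, P_{j+1}^{(s)}, \dots, P_{j+2m}^{(s)})$ by the column of differences $(\Delta_{P,\kappa_n}^{[0]}, \dots, \Delta_{P,\kappa_n}^{[2m]})$, exactly as in Proposition~\ref{C}. Here $D_j^{(n)}$ is a determinant of size $(2m+1)\times(2m+1)$: the first $m$ rows are values of the differences at $\xi_i^{(n)}$, the next $m$ rows are derivatives of the differences at $\xi_i^{(n)}$, and the last row is the differences evaluated at $t = x_i^{(n)}$.

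Next I would introduce the normalizing diagonal matrices. On the right, multiply by $\operatorname{diag}(n^{-s}, n^{1-s}, \dots, n^{2m-s})$, which contributes a power $n^{-s(2m+1) + (0 + 1 + \dots + 2m)} = n^{-(2m+1)s + m(2m+1)}$ of $n$; on the left, multiply by $\operatorname{diag}(\underbrace{1,\dots,1}_{m}, \underbrace{n^{-2},\dots,n^{-2}}_{m}, 1)$, contributing $n^{-2m}$. The product of these exponents, $-(2m+1)s + m(2m+1) - 2m = 2m^2 + m - 2ms - s - 2m = 2m^2 - m - 2ms - s$, matches the exponent in the statement, so after this rescaling $n^{2m^2 - m - 2ms - s} D_{\kappa_n}^{(n)}(x_i^{(n)})$ is the determinant of a matrix whose entries are exactly the quantities controlled by Propositions~\ref{jacobi-asymp} and~\ref{der-asymp}. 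In the regime $\xi_i^{(n)} = 1 - w_i/(2n^2)$ and $x_i^{(n)} = 1 - z_i/(2n^2)$ these entries converge: the first $m$ rows to $2^s w_i^{(\ell-s)/2} J_s^{(\ell)}(\kappa\sqrt{w_i})$, the next $m$ rows to $2^s w_i^{(-2+\ell-s)/2}\widetilde J_{s+1}^{(\ell)}(\kappa\sqrt{w_i})$, and the last row to $2^s z_i^{(\ell-s)/2} J_s^{(\ell)}(\kappa\sqrt{z_i})$, for $\ell = 0, 1, \dots, 2m$.

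Then I would collect the prefactors. From the $i$-th of the first $m$ rows one factors out $2^s w_i^{-s/2}$; from the $i$-th of the next $m$ rows one factors out $2^s w_i^{-1-s/2}$ (writing $w_i^{(-2+\ell-s)/2} = w_i^{-1-s/2} w_i^{\ell/2}$); from the last row one factors out $2^s z_i^{-s/2}$. The powers $w_i^{\ell/2}$ and $z_i^{\ell/2}$ remaining in column $\ell$ factor out columnwise, but — just as in the Wronskian computations leading to $C_I^{(s,w)}$ in Proposition~\ref{C} — these columnwise scalings are precisely what converts the matrix of values $J_s^{(\ell)}(\kappa\sqrt{w})$ etc.\ into the Wronskian $W(J_{s,w_1},\dots,J_{s,w_m},\widetilde J_{s+1,w_1},\dots,\widetilde J_{s+1,w_m}, J_{s,z_i})(\kappa)$ evaluated at $\kappa$, since $\tfrac{d}{d\kappa} f(\kappa\sqrt{w}) = \sqrt{w}\, f'(\kappa\sqrt{w})$. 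Carefully bookkeeping: the $2^s$ appears $2m+1$ times, giving $2^{(2m+1)s} = 2^{2ms+s}$; the $w_i$ prefactors multiply to $\prod_i w_i^{-s/2} \cdot \prod_i w_i^{-1-s/2} = (w_1\cdots w_m)^{-1-s}$; and the last row leaves $z_i^{-s/2}$. This produces exactly the claimed constant $\dfrac{2^{2ms+s}}{(w_1\cdots w_m)^{1+s}} z_i^{-s/2}$ times $A_I^{(s,w)}(\kappa,z_i)$. The uniform convergence on compact $\kappa$-subsets of $(0,\infty)$ is inherited from the corresponding uniform statements in Propositions~\ref{jacobi-asymp} and~\ref{der-asymp}, together with continuity of the determinant.

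The only genuine subtlety — and the step I would be most careful about — is the algebraic identification of the limiting determinant of \emph{values} $\{J_s^{(\ell)}(\kappa\sqrt{\cdot})\}_{\ell}$ with the \emph{Wronskian} in $\kappa$ of the functions $J_{s,w_i}(\kappa) = J_s(\kappa\sqrt{w_i})$, $\widetilde J_{s+1,w_i}(\kappa)$, $J_{s,z_i}(\kappa)$: one must check that extracting the columnwise factors $w_i^{\ell/2}$, $z_i^{\ell/2}$ exactly accounts for the chain-rule factors $(\sqrt{w_i})^\ell$, $(\sqrt{z_i})^\ell$ appearing when one differentiates $f(\kappa\sqrt{w_i})$ repeatedly in $\kappa$, and that the mixed derivative-rows involving $\widetilde J_{s+1}$ fit the same pattern (they do, because Proposition~\ref{der-asymp} already packages them as derivatives of $\widetilde J_{s+1}$, not of $J_s$). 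Since this bookkeeping is identical to the one already carried out in Proposition~\ref{C}, I would simply refer to that argument and indicate only the modifications caused by the extra row and column.
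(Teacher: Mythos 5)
Your proposal is correct and is essentially the argument the paper intends: the paper's own proof of Proposition~\ref{A} consists of the single sentence ``The proof is similar to that of Proposition~\ref{C},'' and your elaboration — passing to higher differences by column operations, normalizing with the two diagonal matrices (whose exponents you correctly verify sum to $2m^2-m-2ms-s$), applying Propositions~\ref{jacobi-asymp} and~\ref{der-asymp} entrywise, and absorbing the chain-rule factors $w_i^{\ell/2}$, $z_i^{\ell/2}$ into the Wronskian entries — is exactly the computation carried out explicitly for $\delta_{\kappa_n}^{(n)}$ in Proposition~\ref{C}, adapted to the extra row and column. The only quibble is the phrase ``factor out columnwise'' for the powers $w_i^{\ell/2}$ (they depend on the row, so they are absorbed into the derivative entries rather than extracted), but your final paragraph makes clear you understand this correctly.
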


\begin{proof}
The proof is similar to that of Proposition \ref{C}.
\end{proof}

\begin{defn} Define the column vector function $\boldsymbol{\theta}_j^{(n)}(t)$ by  $$ \boldsymbol{\theta}_j^{(n)}(t) =  \Big( P_j^{(s)} (\xi^{(n)}_1) , \cdots,  P_j^{(s)} (\xi^{(n)}_m) ,  \dot{ P}_j^{(s)} (\xi^{(n)}_1) , \cdots,
 \dot{P}_j^{(s)} (\xi^{(n)}_m), P_j^{(s)} (t) \Big)^T.$$
 \end{defn}

 \begin{prop}\label{B}
If the sequences $x_i^{(n)}$ and  $\xi^{(n)}$ satisfy \eqref{regime-case1}, then \begin{align*}  & \lim_{n \to \infty} n^{1+ 2m^2 - m -2ms-s} \left| \boldsymbol{\theta}_{\kappa_n}^{(n)}(x_i^{(n)}) \,  \cdots  \, \boldsymbol{\theta}_{\kappa_n +2m-1}^{(n)} (x_i^{(n)} ) \quad \boldsymbol{\theta}_{\kappa_n+2m}^{(n)} (x_i^{(n)})  - \boldsymbol{\theta}_{\kappa_n-1}^{(n)} (x_i^{(n)}) \right| \\ & =   \frac{2^{2ms +s}}{(w_1 \cdots w_m)^{1+s}} z_i^{-\frac{s}{2}}\cdot  B_I^{(s, w)}(\kappa, z_i), \end{align*} where  $B_I^{(s, w)}(\kappa, z_i)  = \left | \begin{array}{ccccc}\boldsymbol{\eta}_{s, z_i}(\kappa) & \boldsymbol{\eta}_{s, z_i}'(\kappa) & \cdots & \boldsymbol{\eta}_{s, z_i}^{(2m-1)}(\kappa) & \boldsymbol{\eta}_{s, z_i}^{(2m+1)}(\kappa) \end{array}  \right|$ and $\boldsymbol{\eta}_{s, z_i}(\kappa)$ is the column vector $$\Big(J_s(\kappa \sqrt{w_1}), \cdots, J_s(\kappa \sqrt{w_m} ), \widetilde{J}_{s+1}(\kappa \sqrt{w_1}), \cdots, \widetilde{J}_{s+1}(\kappa \sqrt{w_1}), J_s(\kappa \sqrt{z_i})\Big)^T.$$









\end{prop}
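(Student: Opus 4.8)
The plan is to mimic exactly the proof of Proposition~\ref{C}, the only differences being bookkeeping for the extra (last) column, which is now a \emph{difference} of two consecutive columns rather than a single column. First I would write out $\boldsymbol{\theta}_j^{(n)}(t)$ in terms of the higher differences: using the multilinearity of the determinant in the columns, replace the columns indexed $\kappa_n, \kappa_n+1, \dots, \kappa_n + 2m-1$ by the columns built from $\Delta_{P,\kappa_n}^{[0]}, \Delta_{P,\kappa_n}^{[1]}, \dots, \Delta_{P,\kappa_n}^{[2m-1]}$ (and their $t$-derivatives $\dot\Delta$), which only multiplies the determinant by a constant (a product of binomial coefficients) that does not affect the limit, or — more precisely — leaves the determinant unchanged if one uses the triangular change of basis from $\{P_{\kappa_n}, \dots, P_{\kappa_n + 2m-1}\}$ to $\{\Delta^{[0]}_{P,\kappa_n}, \dots, \Delta^{[2m-1]}_{P,\kappa_n}\}$. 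The crucial observation is that the \emph{last} column, $\boldsymbol{\theta}_{\kappa_n+2m}^{(n)}(x_i^{(n)}) - \boldsymbol{\theta}_{\kappa_n-1}^{(n)}(x_i^{(n)})$, is a telescoping sum $\sum_{\ell} \binom{2m+1}{\ell}\Delta_{P,\kappa_n-1}^{[\ell]}$ over the relevant range; the point of subtracting $\boldsymbol{\theta}_{\kappa_n-1}^{(n)}$ is precisely that, after the triangular reduction of the first $2m$ columns, this last column reduces (modulo the span of the earlier columns, which contribute nothing to the determinant) to $\Delta_{P,\kappa_n}^{[2m+1]}(t)$ in the last slot — i.e. the $(2m+1)$-st difference rather than the $(2m)$-th, which is why $\boldsymbol{\eta}^{(2m+1)}$ appears in $B_I^{(s,w)}$ in place of $\boldsymbol{\eta}^{(2m)}$.

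Next I would multiply the $(2m+1)\times(2m+1)$ matrix on the right by the diagonal matrix $\mathrm{diag}(n^{-s}, n^{1-s}, \dots, n^{2m-1-s}, n^{2m+1-s})$ — note the last exponent is $2m+1-s$, not $2m-s$, matching the $(2m+1)$-st difference in the last column — and on the left by $\mathrm{diag}(\underbrace{1,\dots,1}_{m},\underbrace{n^{-2},\dots,n^{-2}}_{m}, 1)$, exactly as in Proposition~\ref{C}, the last row corresponding to the $P_j^{(s)}(t)$ entries. The total power of $n$ pulled out is $(-sm + \tfrac{m(m-1)}{2}) + (\tfrac{m(2m-1)}{2} - sm - m) + (2m+1-s) - 2m = 2m^2 - m - 2ms - s$ on the matrix side, so the prefactor $n^{1 + 2m^2 - m - 2ms - s}$ in the statement accounts for the extra $n^{1}$ coming from the $\tfrac{1}{2n^2}\mapsto$ scaling convention (cf. the relation between $n^{2m^2-2sm-3m}$ in Proposition~\ref{C} and the present exponent). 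Then I would apply Proposition~\ref{jacobi-asymp} entrywise to the first $m$ rows (where the $x^{(n)} = 1 - z/(2n^2)$ regime with $z = w_i$ gives $2^s z^{(\ell-s)/2} J_s^{(\ell)}(\kappa\sqrt{z})$), Proposition~\ref{der-asymp} to the middle $m$ rows (giving the $\widetilde J_{s+1}^{(\ell)}$ entries), and Proposition~\ref{jacobi-asymp} once more to the last row with $z = z_i$ (giving $2^s z_i^{(\ell-s)/2} J_s^{(\ell)}(\kappa\sqrt{z_i})$ for $\ell = 0,1,\dots,2m-1,2m+1$). Collecting the various powers of $2$, $w_i$, and $z_i$ coming out of these limits reproduces the claimed constant $\frac{2^{2ms+s}}{(w_1\cdots w_m)^{1+s}} z_i^{-s/2}$ and the Wronskian-type determinant $B_I^{(s,w)}(\kappa, z_i)$; the uniformity statement is then inherited directly from the uniformity in Propositions~\ref{jacobi-asymp} and~\ref{der-asymp}.

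The main obstacle — really the only non-routine point — is the careful justification that subtracting $\boldsymbol{\theta}_{\kappa_n-1}^{(n)}$ produces exactly the $(2m+1)$-st difference in the last column modulo the span of the other columns, so that one may legitimately normalise the last column by $n^{2m+1-s}$ (rather than $n^{2m-s}$, which would give a divergent or vanishing limit). This requires observing that $\boldsymbol{\theta}_{\kappa_n+2m}^{(n)} - \boldsymbol{\theta}_{\kappa_n-1}^{(n)} = \sum_{\ell=0}^{2m+1}\binom{2m+1}{\ell}(-1)^{2m+1-\ell}\,(\text{column of }\Delta^{[\ell]}\text{'s at level }\kappa_n-1)$ minus an appropriate combination, and that all the $\ell \le 2m$ contributions lie in the column span already normalised in the first $2m$ columns, hence drop out of the determinant; only the $\ell = 2m+1$ term survives, and its last entry is $\Delta_{P,\kappa_n-1}^{[2m+1]}(t) = \Delta_{P,\kappa_n}^{[2m+1]}(t) + O(\text{lower})$, whose rescaled limit by Proposition~\ref{jacobi-asymp} (applied with $\ell = 2m+1$) is $2^s z_i^{(2m+1-s)/2} J_s^{(2m+1)}(\kappa\sqrt{z_i})$, i.e. the entry $\boldsymbol{\eta}_{s,z_i}^{(2m+1)}(\kappa)$. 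Once this is set up the rest is the same determinant-asymptotics computation as in Proposition~\ref{C}, and I would simply say ``the proof is similar to that of Proposition~\ref{C}'' for those parts, as the authors do for Proposition~\ref{A}.
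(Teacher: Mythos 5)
Your proposal is correct and follows essentially the same route as the paper: the authors' proof of Proposition~\ref{B} is literally ``similar to that of Proposition~\ref{C}'' together with the elementary fact that $\Delta_{P,\kappa_n-1}^{[2m+1]} = P_{\kappa_n+2m}^{(s)} - P_{\kappa_n-1}^{(s)} + (\text{linear combination of } P_{\kappa_n}^{(s)},\dots,P_{\kappa_n+2m-1}^{(s)})$, which is exactly the ``only non-routine point'' you isolate (the difference column equals the $(2m+1)$-st difference modulo the span of the other columns, hence gets normalised by $n^{2m+1-s}$). The only blemish is your intermediate exponent bookkeeping (the ``extra $n^{1}$ from the scaling convention'' remark): the $+1$ in $n^{1+2m^2-m-2ms-s}$ comes entirely from normalising the last column by $n^{2m+1-s}$ rather than $n^{2m-s}$, and a direct sum of the diagonal exponents already gives $1+2m^2-m-2ms-s$, so this does not affect the correctness of the argument.
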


\begin{proof}
The proof is similar to that of Proposition \ref{C}, we emphasize that in the proof we used  the elementary fact \begin{align*}  \Delta_{P, \kappa_n-1}^{[2m+1]}   = &  P_{\kappa_n+2m}^{(s)} +(-1)^{2m+1} P_{\kappa_n-1}^{(s)}   \\  +  &  \text{ linear combination of $ P_{\kappa_n}^{(s)}, P_{\kappa_n+1}^{(s)}, \cdots, P_{\kappa_n + 2m - 1}^{(s)}$}. \end{align*}
\end{proof}

\begin{rem}\label{A-B-relation}
By the property of determinant, it is easy to  see that $$\frac{\partial}{\partial \kappa} A_I^{(s, w)} (\kappa, z_i)  = B_I^{(s, w)} (\kappa, z_i). $$
\end{rem}

\begin{thm}\label{thm-case1}
In the regime \eqref{regime-case1} ,   we have 
\begin{align*} &  \lim_{n \to \infty} \widetilde{K}_n^{[s, \xi^{(n)}]} (z_1, z_2)  \\ = &   \frac{A_I^{(s,w)}(1, z_1) B_I^{(s,w)}(1, z_2) - A_I^{(s,w)}(1, z_2) B_I^{(s,w)}(1, z_1) }{  2 \Big| \prod_{i= 1}^m (z_1 - w_i) (z_2 - w_i )\Big|\cdot \big[C_I^{(s, w)}(1) \big]^2 \cdot (z_1-z_2)}. \end{align*}
We denote this kernel by  $\mathscr{K}_\infty^{[s, \xi]} (z_1, z_2)$.
\end{thm}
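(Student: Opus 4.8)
The plan is to pass to the limit in the two representations \eqref{C-D-good}, \eqref{S-good-2} of the Christoffel--Darboux kernel $\widetilde{K}_n^{[s,\xi^{(n)}]}$ using the asymptotics already established. First I would observe that the prefactor $(z_1z_2)^{s/2}/\bigl|\prod_{i=1}^m (z_1-w_i)(z_2-w_i)\bigr|$ in \eqref{C-D-good} does not depend on $n$, so the only thing to control is $S_n(z_1,z_2)$ as given in \eqref{S-good-2}. For the two $D$-factors I would apply Proposition~\ref{A} with $\kappa_n = n$ (so $\kappa=1$) and with $\kappa_n = n-1$; since $\lim_{n\to\infty}(n-1)/n = 1$, both give the same limiting Wronskian $A_I^{(s,w)}(1,z_i)$ up to the common normalizing constant $\frac{2^{2ms+s}}{(w_1\cdots w_m)^{1+s}} z_i^{-s/2}$. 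The subtlety is that $D_n^{(n)}(x_1^{(n)}) D_{n-1}^{(n)}(x_2^{(n)}) - D_n^{(n)}(x_2^{(n)}) D_{n-1}^{(n)}(x_1^{(n)})$ vanishes to leading order after dividing by the relevant power of $n$ — one cannot simply multiply the two limits — so one must extract the next-order term. This is exactly what Proposition~\ref{B} is designed for: writing $D_{n-1}^{(n)} = D_n^{(n)} - (D_n^{(n)} - D_{n-1}^{(n)})$ and noting that the columns of $D_n^{(n)} - D_{n-1}^{(n)}$ are the differences $\boldsymbol{\theta}^{(n)}_{\kappa_n+j} - \boldsymbol{\theta}^{(n)}_{\kappa_n+j-1}$, the leading part cancels in the antisymmetrized combination and the surviving term is governed by the object appearing in Proposition~\ref{B}, whose limit is $B_I^{(s,w)}(1,z_i)$.

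Concretely, I would expand
\begin{align*}
& D_n^{(n)}(x_1^{(n)})\, D_{n-1}^{(n)}(x_2^{(n)}) - D_n^{(n)}(x_2^{(n)})\, D_{n-1}^{(n)}(x_1^{(n)}) \\
& = D_n^{(n)}(x_1^{(n)})\bigl[D_{n-1}^{(n)}(x_2^{(n)}) - D_n^{(n)}(x_2^{(n)})\bigr] - D_n^{(n)}(x_2^{(n)})\bigl[D_{n-1}^{(n)}(x_1^{(n)}) - D_n^{(n)}(x_1^{(n)})\bigr],
\end{align*}
so each bracketed factor is (up to sign) a determinant whose columns are $\boldsymbol{\theta}^{(n)}$-type differences, and after re-expressing $D_n^{(n)}$ itself in terms of the vectors $\boldsymbol{\theta}^{(n)}$ one recognizes precisely the determinants handled by Proposition~\ref{B} (this is why that proposition records the "$\boldsymbol{\theta}_{\kappa_n+2m} - \boldsymbol{\theta}_{\kappa_n-1}$" column). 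Then I would combine: $D_n^{(n)}(x_i^{(n)}) \sim n^{-(2m^2-m-2ms-s)}\cdot c\, z_i^{-s/2} A_I^{(s,w)}(1,z_i)$ by Proposition~\ref{A}, the correction determinant $\sim n^{-(1+2m^2-m-2ms-s)}\cdot c\, z_i^{-s/2} B_I^{(s,w)}(1,z_i)$ by Proposition~\ref{B}, the normalization $\tfrac{h_{n-1}^{(s)}k_{n+2m}^{(s)}}{k_{n-1}^{(s)}}[\delta_n^{(n)}]^2$ is computed from \eqref{leading-norm}, the ratio lemma $k_{\kappa_n+p}^{(s)}/k_{\kappa_n}^{(s)}\to 2^p$, and Proposition~\ref{C} (which gives $\delta_n^{(n)}\sim n^{-(2m^2-2sm-3m)}\cdot \tfrac{2^{2ms}}{(w_1\cdots w_m)^{1+s}} C_I^{(s,w)}(1)$). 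A bookkeeping check that all powers of $n$ and all powers of $2$ and of $(w_1\cdots w_m)$ cancel, together with the $\tfrac12$ coming from the $(2n^2)^{2m-s}$ factor against $(2n^2)^{\text{stuff}}$ in $\delta$ and the $h,k$ factors, yields the claimed formula; the leftover $(z_2-z_1)$ in the denominator of \eqref{S-good-2} matches the $(z_1-z_2)$ in the statement after tracking the sign in the antisymmetrization.

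I would also record that the limit holds uniformly on compact subsets of $\{z_1\neq z_2\}$ away from the $w_i$, using the uniform-convergence clauses in Propositions~\ref{jacobi-asymp}, \ref{der-asymp}, \ref{A}, \ref{B}, \ref{C}; strictly this is not needed for the pointwise statement but is convenient for later trace-class arguments. The main obstacle, as indicated above, is the cancellation of the leading term in the antisymmetrized product of $D$-determinants: one must be careful that the "next term" picked up is exactly the $B_I$-Wronskian and not some spurious contribution, and in particular that replacing $\kappa_n=n-1$ by $\kappa=1$ in Proposition~\ref{A} for the uncancelled $D_n^{(n)}$ factors does not lose the order-$1/n$ information that survives — this is handled cleanly by isolating the difference $D_n^{(n)}-D_{n-1}^{(n)}$ first (so that Proposition~\ref{A} is only ever applied to $D_n^{(n)}$, at $\kappa=1$) rather than subtracting two separate limits. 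Everything else is the routine power-counting and constant-chasing sketched above, plus invoking Remark~\ref{A-B-relation} if one wants to present the answer in the integrable form $\partial_\kappa A_I = B_I$.
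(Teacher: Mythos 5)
Your proposal is correct and follows essentially the same route as the paper: it passes to the limit in \eqref{C-D-good}--\eqref{S-good-2}, isolates the difference $D_n^{(n)}-D_{n-1}^{(n)}$ via the identity $D_n^{(n)}(x_1)D_{n-1}^{(n)}(x_2)-D_n^{(n)}(x_2)D_{n-1}^{(n)}(x_1)=D_n^{(n)}(x_2)\bigl[D_n^{(n)}(x_1)-D_{n-1}^{(n)}(x_1)\bigr]-D_n^{(n)}(x_1)\bigl[D_n^{(n)}(x_2)-D_{n-1}^{(n)}(x_2)\bigr]$ so that Proposition~\ref{A} is only ever applied to the uncancelled factors and Proposition~\ref{B} to the difference determinants with last column $\boldsymbol{\theta}_{n+2m}^{(n)}-\boldsymbol{\theta}_{n-1}^{(n)}$, and then combines with Proposition~\ref{C} and the power-counting of constants. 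This is exactly the paper's argument, including the key observation that one must not subtract two separately-taken limits.
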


\begin{proof}
It is easy to see that \begin{align*}  D_{n-1}^{(n)} (x_i^{(n)}) =   \left| \boldsymbol{\theta}_{n}^{(n)}(x_i^{(n)})  \cdots \boldsymbol{\theta}_{n+2m-1}^{(n)} (x_i^{(n)}) \quad \boldsymbol{\theta}_{n-1}^{(n)} (x_i^{(n)}) \right| ,  \end{align*}  hence $D_n^{(n)} (x_i^{(n)}) - D_{n-1}^{(n)} (x_i^{(n)}) $ equals to   $$  \left| \boldsymbol{\theta}_{n}^{(n)}(x_i^{(n)})  \cdots \boldsymbol{\theta}_{n+2m-1}^{(n)} (x_i^{(n)}) \quad \boldsymbol{\theta}_{n +2m}^{(n)} (x_i^{(n)}) -  \boldsymbol{\theta}_{n-1}^{(n)} (x_i^{(n)}) \right|.$$ Note that \begin{align*} & D_n^{(n)} (x_1^{(n)}) D_{n-1}^{(n)} (x_2^{(n)}) - D_n^{(n)} (x_2^{(n)}) D_{n-1}^{(n)} (x_1^{(n)}) \\ = & 
  D_n^{(n)}(x_2^{(n)} ) \Big[ D_n^{(n)} (x_1^{(n)}) - D_{n-1}^{(n)} (x_1^{(n)})\Big]  \\ & - D_n^{(n)}(x_1^{(n)} ) \Big[ D_n^{(n)} (x_2^{(n)}) - D_{n-1}^{(n)} (x_2^{(n)})\Big]. \end{align*} Now applying Propositions \ref{A} and \ref{B}, we obtain that  \begin{align*}   & \lim_{n \to \infty} n^{1 + 4m^2 - 2m - 4ms -2s } \Big[ D_n^{(n)} (x_1^{(n)}) D_{n-1}^{(n)} (x_2^{(n)}) - D_n^{(n)} (x_2^{(n)}) D_{n-1}^{(n)} (x_1^{(n)})\Big] \\  &=    \frac{2^{4ms +2s} (z_1z_2)^{-\frac{s}{2}} }{(w_1 \cdots w_m)^{2+2s}}  \Big( A_I^{(s, w)} (1, z_2) B_I^{(s,w)}(1, z_1) - A_I^{(s, w)} (1, z_1) B_I^{(s,w)}(1, z_2) \Big).   \end{align*} Combining with Proposition \ref{C}, we deduce that \begin{align*} &  \lim_{n \to \infty} S_n(z_1, z_2)  \\ = &      (z_1z_2)^{- \frac{s}{2}} \cdot \frac{A_I^{(s,w)}(1, z_1) B_I^{(s,w)}(1, z_2) - A_I^{(s,w)}(1, z_2) B_I^{(s,w)}(1, z_1) }{ 2 \big[C_I^{(s, w)} (1) \big]^2 (z_1-z_2)}. \end{align*} Substituting the above formula in \eqref{C-D-good}, we get the desired result.
\end{proof}

\begin{thm}\label{integral-form-1}
The kernel $ \mathscr{K}_\infty^{(s, \xi)}(z_1, z_2)$  has the following integral form: \begin{align*}  & \mathscr{K}_\infty^{(s, \xi)}(z_1, z_2)   \\ = &  \frac{1}{2 \Big| \prod_{i= 1}^m (z_1 - w_i) (z_2 - w_i )\Big|}  \int_0^1   \frac{ A_I^{(s, w)}(t, z_1)   A_I^{(s, w)}(t, z_2)  }{ \big[C_I^{(s, w)} (t) \big]^2 }  t dt .\end{align*}
\end{thm}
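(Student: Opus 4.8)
The plan is to pass to the scaling limit directly in the Christoffel--Darboux sum \eqref{S-good-1} rather than in the closed-form ``integrable'' expression \eqref{S-good-2}. The point is that the Riemann sum
$$
S_n(z_1, z_2) = (2n^2)^{2m-s-1} \sum_{j=0}^{n-1} \frac{D_j^{(n)}\big(1-\tfrac{z_1}{2n^2}\big)\, D_j^{(n)}\big(1-\tfrac{z_2}{2n^2}\big)}{\frac{h_j^{(s)} k_{j+2m}^{(s)}}{k_j^{(s)}}\,\delta_j^{(n)}\,\delta_{j+1}^{(n)}}
$$
should converge, after writing $j = \kappa_n$ with $\kappa = j/n$, to an integral $\int_0^1 (\cdots)\, d\kappa$. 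First I would fix $\kappa \in (0,1)$ and combine Proposition~\ref{A} (applied with $\kappa_n = j$) for the two factors $D_j^{(n)}$, Proposition~\ref{C} for $\delta_j^{(n)}$ and $\delta_{j+1}^{(n)}$ (noting $\delta_{j+1}^{(n)}/\delta_j^{(n)} \to 1$ in the appropriate normalization since both are of the same order), the formulae \eqref{leading-norm} for $h_j^{(s)}$, and the lemma giving $k_{j+2m}^{(s)}/k_j^{(s)} \to 2^{2m}$. Collecting the powers of $n$ and the powers of $w_1\cdots w_m$, every prefactor except $(z_1 z_2)^{-s/2}/\big|\prod_i (z_1-w_i)(z_2-w_i)\big|$ and $[C_I^{(s,w)}(t)]^2$ must cancel, and the summand of $S_n$ at $j = \kappa_n$ behaves like $\tfrac1n$ times
$$
(z_1 z_2)^{-\frac{s}{2}}\, \frac{A_I^{(s,w)}(\kappa, z_1)\, A_I^{(s,w)}(\kappa, z_2)}{2\,[C_I^{(s,w)}(\kappa)]^2}\,\kappa,
$$
the extra factor $\kappa$ coming from the mismatch between the scaling exponents in Propositions~\ref{A} and~\ref{C}; I would verify this bookkeeping carefully, since it is the source of the weight $t\,dt$ in the statement. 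Summing $\tfrac1n\sum_{j}$ then gives $\int_0^1$, and dividing out the $(z_1 z_2)^{-s/2}$ and the $\big|\prod_i (z_1-w_i)(z_2-w_i)\big|^{-1}$ via \eqref{C-D-good} yields exactly the claimed integral form for $\mathscr{K}_\infty^{(s,\xi)}(z_1,z_2)$.

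The main obstacle is not the pointwise limit of the summand but the justification of interchanging limit and sum (equivalently, convergence of the Riemann sum to the integral). For this I would use the uniform convergence statements that come with Propositions~\ref{A}, \ref{B}, \ref{C}: they give uniformity for $\kappa$ in compact subsets of $(0,\infty)$, which controls the bulk $j \in [\varepsilon n, n]$, but the contribution of the small indices $j \le \varepsilon n$ (equivalently $\kappa$ near $0$) must be estimated separately. Here I would exploit that the whole expression $S_n(z_1,z_2)$ is, up to the explicit prefactor in \eqref{C-D-good}, the Christoffel--Darboux kernel $\widetilde{K}_n^{[s,\xi^{(n)}]}(z_1,z_2)$ of Theorem~\ref{thm-case1}, whose limit we already know exists and equals the integrable-form kernel $\mathscr{K}_\infty^{[s,\xi]}(z_1,z_2)$; so it suffices to show the small-$\kappa$ tail is negligible, using either the Mehler--Heine asymptotics near the hard edge together with crude bounds on Bessel-type functions near $0$, or a diagonal/positivity argument (the partial sums of a Christoffel--Darboux kernel are monotone on the diagonal, so tails are controlled by $\widetilde{K}_n(z_i,z_i) - \widetilde{K}_n^{[\varepsilon]}(z_i,z_i)$).

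Finally, once both the integrable form (Theorem~\ref{thm-case1}) and the integral form are available, one may also check their consistency directly: differentiating $\int_0^1 A_I^{(s,w)}(t,z_1) A_I^{(s,w)}(t,z_2)\,[C_I^{(s,w)}(t)]^{-2}\, t\, dt$ in a Christoffel--Darboux-type telescoping, using Remark~\ref{A-B-relation} that $\partial_t A_I^{(s,w)}(t,z) = B_I^{(s,w)}(t,z)$, should reproduce the Wronskian-type numerator $A_I(1,z_1)B_I(1,z_2) - A_I(1,z_2)B_I(1,z_1)$ divided by $(z_1-z_2)$; this furnishes an independent verification but, strictly speaking, would require knowing that the two $t$-dependent objects satisfy a first-order ODE in $t$ whose ``Wronskian'' telescopes, which is essentially the continuum limit of the three-term recurrence. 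I would present the Riemann-sum argument as the main proof and mention this ODE/telescoping check as a remark.
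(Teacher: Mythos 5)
Your overall strategy coincides with the paper's: split the Christoffel--Darboux sum \eqref{S-good-1} at $j=\lfloor n\varepsilon\rfloor$, use the uniform convergence in Propositions \ref{A} and \ref{C} to identify the bulk $j\ge\lfloor n\varepsilon\rfloor$ with $\int_\varepsilon^1(\cdots)\,t\,dt$ (the factor $t$ comes from $1/h_j^{(s)}\sim \kappa n\,2^{-s}$ via \eqref{leading-norm}, not from a mismatch between the exponents of Propositions \ref{A} and \ref{C}, but your bookkeeping caveat covers this), and then show the head $j<\lfloor n\varepsilon\rfloor$ is negligible. The paper does exactly this, writing the head $I_n(\varepsilon)$ in telescoped form via the Christoffel--Darboux identity so that $\lim_n I_n(\varepsilon)=I_\infty(\varepsilon)$ equals $\varepsilon$ times the integrable-form expression of Theorem \ref{thm-case1} evaluated at $t=\varepsilon$.

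However, the step you leave open --- that the head is negligible --- is precisely the nontrivial content of the paper's proof, and one of your two suggested ways of closing it fails. Positivity and monotonicity of the diagonal partial sums only show that $I_\infty(\varepsilon)(z,z)$ decreases to a \emph{nonnegative} limit as $\varepsilon\to0^+$; they cannot show that the limit is zero. That it genuinely can be nonzero is demonstrated by the paper itself: in Case II with $m=1$, $-1<s<0$, the analogous head limit equals the rank-one term $\mathscr{N}^{(s,v)}(z_1)\mathscr{N}^{(s,v)}(z_2)$ (Proposition \ref{one-rank-per}), and in Case III two such extra terms appear. So the absence of such terms in Theorem \ref{integral-form-1} is a fact specific to Case I that must be proved. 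The paper proves it as Lemma \ref{lem-case1}: expanding $J_{s,w_i}$ and $\widetilde{J}_{s+1,w_i}$ in power series, factoring the common power of $\varepsilon$ out of the Wronskians, and checking that the leading coefficient --- a confluent Vandermonde determinant, Lemma \ref{sublem} --- is nonzero, one obtains $C_I^{(s,w)}(\varepsilon)\asymp\varepsilon^{2ms+m(2m-1)}$, $A_I^{(s,w)}(\varepsilon,z)\asymp\varepsilon^{(2m+1)s+m(2m+1)}$, $B_I^{(s,w)}(\varepsilon,z)\asymp\varepsilon^{(2m+1)s+m(2m+1)-1}$, whence $|I_\infty(\varepsilon)|\lesssim\varepsilon^{4m+2s}\to0$. ``Crude bounds'' on the Bessel-type functions near $0$ will not suffice: you need the exact orders of vanishing of all three Wronskians and the nondegeneracy of their leading coefficients. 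Your proposal needs to be completed by supplying this lemma or an equivalent argument; the concluding ODE/telescoping consistency check is fine as a remark but does not substitute for it.
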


\begin{proof}
Let us fix $ z_1, z_2 > 0$. For any $\varepsilon > 0$, we can divide the sum in  \eqref{S-good-1} into two parts:\begin{align*} S_n(z_1, z_2)  & =  \underbrace{(2n^2)^{2m -s -1} \sum_{j = 0}^{\lfloor n \varepsilon \rfloor -1 }   \cdots}_{ = : \, I_n(\varepsilon)} \, +  \, \underbrace{(2n^2)^{2m -s -1} \sum_{j = \lfloor n \varepsilon \rfloor }^{n-1} \cdots}_{ = : \, II_n(\varepsilon)}.\end{align*}
The second term $II_n(\varepsilon)$  can be written as an integral: \begin{align*}  II_n(\varepsilon)  & =  \int_{\Big[\frac{\lfloor n \varepsilon \rfloor }{n}, 1\Big)}   \underbrace{(2n^2)^{2m -s -1} \frac{D_{\lfloor n t \rfloor}^{(n)} (x_1^{(n)} ) D_{\lfloor n t \rfloor}^{(n)} (x_2^{(n)}   )}{ \frac{h_{\lfloor n t \rfloor}^{(s)} k_{\lfloor n t \rfloor+2m}^{(s)} }{ k_{\lfloor n t \rfloor}^{(s)} } \delta_{\lfloor n t \rfloor}^{(n)} \delta_{\lfloor n t \rfloor+1}^{(n)} }    \cdot n}_{ = : \, T_n  (t) } \quad dt  .\end{align*} By Propositions \ref{C} and \ref{A}, we have the uniform convergence for $t \in [\varepsilon, 1]$:  \begin{align*} & \lim_{n \to \infty} T_n(t)  =   \frac{(z_1z_2)^{- \frac{s}{2}}}{2 (C^{[s, w]} (t) )^2 } A_I^{(s, w)}(t, z_1)   A_I^{(s, w)}(t, z_2)  t ,\end{align*} hence as $n \to \infty$, $II_n(\varepsilon)$ tends to $$II_\infty(\varepsilon) = \int_\varepsilon^1   \frac{(z_1z_2)^{- \frac{s}{2}}}{2 (C_I^{(s, w)} (t) )^2 } A_I^{(s, w)}(t, z_1)   A_I^{(s, w)}(t, z_2)  t dt . $$

For the first term $I_n(\varepsilon)$, we use Christoffel-Darboux formula to write it as \begin{align*}  \frac{(2n^2)^{2m-s}}{  \frac{h_{\lfloor n \varepsilon \rfloor-1}^{(s)} k_{\lfloor n \varepsilon \rfloor+2m}^{(s)} }{ k_{\lfloor n \varepsilon \rfloor-1}^{(s)} }  }  \frac{  D_{\lfloor n \varepsilon \rfloor}^{(n)}(x_1^{(n)} ) \cdot D_{\lfloor n \varepsilon \rfloor-1}^{(n)} ( x_2^{(n)}  ) - D_{\lfloor n \varepsilon \rfloor}^{(n)}(x_2^{(n)} ) \cdot D_{\lfloor n \varepsilon \rfloor-1}^{(n)} ( x_1^{(n)}  ) }{ \big[ \delta_{\lfloor n \varepsilon \rfloor}^{(n)} \big]^2  (z_2 - z_1) }.  \end{align*} By similar arguments as in the proof of Theorem \ref{thm-case1},  $$\lim_{n\to \infty} I_n(\varepsilon) = I_\infty(\varepsilon),$$ where $I_\infty(\varepsilon)$ is given by the formula \begin{align*}  & I_\infty(\varepsilon)  \\ = & \frac{(z_1z_2)^{-\frac{s}{2}}}{2} \cdot \frac{A_I^{(s,w)}(\varepsilon, z_1) B_I^{(s,w)}(\varepsilon, z_2) - A_I^{(s,w)}(\varepsilon, z_2) B_I^{(s,w)}(\varepsilon, z_1) }{ \big[C_I^{(s, w)}(\varepsilon) \big]^2 (z_1-z_2)} \cdot \varepsilon.\end{align*} Hence for any $\varepsilon > 0$, we have $$ \lim_{n \to \infty} S_n(z_1^{(n)}, z_2^{(n)}) = I_\infty(\varepsilon) + II_\infty(\varepsilon) .$$  The theorem is completely proved if we can establish $\lim_{ \varepsilon \to 0} I_\infty(\varepsilon) = 0. $ This is given by the following lemma.
\end{proof}

\begin{lem}\label{lem-case1}
For any $z_1, z_2 >0$, we have
$$\lim_{\varepsilon \to 0 + }  \frac{A_I^{(s,w)}(\varepsilon, z_1) B_I^{(s,w)}(\varepsilon, z_2) - A_I^{(s,w)}(\varepsilon, z_2) B_I^{(s,w)}(\varepsilon, z_1) }{ \big[C_I^{(s, w)}(\varepsilon) \big]^2} \cdot \varepsilon = 0.$$
\end{lem}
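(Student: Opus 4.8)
The plan is to analyze the small-$\varepsilon$ behavior of each of the three Wronskian-type functions $A_I^{(s,w)}(\varepsilon, z)$, $B_I^{(s,w)}(\varepsilon, z)$, and $C_I^{(s,w)}(\varepsilon)$ separately, using the known behavior of Bessel functions near the origin. Recall that $J_{s,w_i}(\kappa) = J_s(\kappa\sqrt{w_i})$ and $\widetilde{J}_{s+1,w_i}(\kappa) = \kappa\sqrt{w_i}\, J_{s+1}(\kappa\sqrt{w_i})$, so that as $\kappa\to 0$ we have $J_s(\kappa\sqrt{w_i}) \sim c_s (\kappa\sqrt{w_i})^s$ and $\widetilde{J}_{s+1}(\kappa\sqrt{w_i}) \sim c_{s+1}(\kappa\sqrt{w_i})^{s+2}$, each function being, up to the overall power, an even analytic function of $\kappa$. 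Since $C_I^{(s,w)}$, $A_I^{(s,w)}$ and $B_I^{(s,w)}$ are Wronskians (i.e.\ determinants whose columns are successive $\kappa$-derivatives) of such functions, the first step is to extract from each column the leading power of $\kappa$: writing $J_s(\kappa\sqrt{w_i}) = (\kappa\sqrt{w_i})^s \phi_i(\kappa^2)$ and $\widetilde{J}_{s+1}(\kappa\sqrt{w_i}) = (\kappa\sqrt{w_i})^{s+2}\psi_i(\kappa^2)$ with $\phi_i,\psi_i$ analytic and nonvanishing at $0$, and similarly $J_s(\kappa\sqrt{z}) = (\kappa\sqrt{z})^s\chi(\kappa^2)$.

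The key step is a power-counting argument on the Wronskian determinants. Differentiating a function of the form $\kappa^{a}(\text{even analytic in }\kappa)$ a total of $k$ times produces a function whose leading term is of order $\kappa^{a-k}$ (with a nonzero constant, since $a$ is not a nonnegative integer here). Hence in the determinant defining $C_I^{(s,w)}(\kappa)$, which has $2m$ columns indexed by derivative orders $0,1,\dots,2m-1$ acting on $m$ functions of leading order $\kappa^s$ and $m$ functions of leading order $\kappa^{s+2}$, the naive leading power obtained by multiplying one entry from each column is $\kappa^{P}$ where $P = m\cdot s + m\cdot(s+2) - (0+1+\cdots+(2m-1)) = 2ms + 2m - m(2m-1) = 2ms + 3m - 2m^2$. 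One must check that the coefficient of $\kappa^{P}$ is nonzero; this follows because that coefficient is itself a Wronskian-type determinant of the monomial/analytic leading factors, which by the distinctness of the $w_i$ is a nonzero multiple of a generalized Vandermonde / confluent Vandermonde determinant (this is exactly the same nonvanishing used implicitly in Proposition \ref{C} to make $C_I^{(s,w)}(\kappa)$ a genuine denominator). Thus $C_I^{(s,w)}(\varepsilon) \asymp \varepsilon^{2ms+3m-2m^2}$. The same bookkeeping for $A_I^{(s,w)}(\varepsilon, z_i)$, which adds one more column (derivative orders $0,\dots,2m$) and one more function of leading order $\kappa^s$, gives leading power $P + s - 2m = 2ms + 3m - 2m^2 + s - 2m = (2m+1)s + m - 2m^2$, and one checks the coefficient is nonzero for the same Vandermonde reason; so $A_I^{(s,w)}(\varepsilon,z_i) \asymp \varepsilon^{(2m+1)s + m - 2m^2}$. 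For $B_I^{(s,w)}(\varepsilon,z_i)$, by Remark \ref{A-B-relation} it equals $\partial_\kappa A_I^{(s,w)}(\kappa,z_i)$ evaluated at $\varepsilon$, hence $B_I^{(s,w)}(\varepsilon,z_i) = O(\varepsilon^{(2m+1)s + m - 2m^2 - 1})$ (and in fact $\asymp$, but only the upper bound is needed).

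Putting the estimates together, the numerator $A_I^{(s,w)}(\varepsilon,z_1)B_I^{(s,w)}(\varepsilon,z_2) - A_I^{(s,w)}(\varepsilon,z_2)B_I^{(s,w)}(\varepsilon,z_1)$ is $O\big(\varepsilon^{2[(2m+1)s+m-2m^2]-1}\big)$, while the denominator $[C_I^{(s,w)}(\varepsilon)]^2 \asymp \varepsilon^{2[2ms+3m-2m^2]}$, so the quotient times $\varepsilon$ is
\begin{align*}
O\!\left(\varepsilon^{\,2[(2m+1)s+m-2m^2]-1 - 2[2ms+3m-2m^2] + 1}\right) = O\!\left(\varepsilon^{\,2s - 4m}\right) = O\!\left(\varepsilon^{\,2(s-2m)}\right).
\end{align*}
Since we are in Case I, where the standing hypothesis is $s - 2m > -1$, this exponent $2(s-2m)$ could still be negative, so a crude power bound on the numerator is \emph{not} enough — and this is where the main obstacle lies. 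One must exploit a cancellation: the leading $\varepsilon^{(2m+1)s+m-2m^2}$ term of $A_I^{(s,w)}(\varepsilon,z_i)$ has a coefficient that is \emph{independent of $z_i$} up to a scalar, or more precisely, the combination $A_I^{(s,w)}(\varepsilon,z_1)B_I^{(s,w)}(\varepsilon,z_2) - A_I^{(s,w)}(\varepsilon,z_2)B_I^{(s,w)}(\varepsilon,z_1)$ is an antisymmetrized product whose top-order terms cancel. Concretely, writing $A_I^{(s,w)}(\varepsilon,z) = \varepsilon^{q}\big(a_0(z) + a_1(z)\varepsilon^2 + \cdots\big)$ with $q = (2m+1)s+m-2m^2$, one has $B_I^{(s,w)}(\varepsilon,z) = \partial_\varepsilon A_I^{(s,w)} = \varepsilon^{q-1}\big(q\,a_0(z) + (q+2)a_1(z)\varepsilon^2+\cdots\big)$, so the antisymmetrized numerator equals $\varepsilon^{2q-1}\big[(q+2)-q\big]\big(a_0(z_1)a_1(z_2) - a_0(z_2)a_1(z_1)\big)\varepsilon^2 + \cdots = O(\varepsilon^{2q+1})$, gaining two powers over the naive bound. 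The quotient times $\varepsilon$ is then $O(\varepsilon^{2q+1 - 2p + 1}) = O(\varepsilon^{2(s-2m)+2}) = O(\varepsilon^{2(s-2m+1)})$ with $p = 2ms+3m-2m^2$, and since $s - 2m + 1 > 0$ by hypothesis this tends to $0$. The clean way to present this is: expand $A_I^{(s,w)}(\varepsilon,z)$ and $C_I^{(s,w)}(\varepsilon)$ as $\varepsilon^{q}\alpha(\varepsilon^2,z)$ and $\varepsilon^{p}\gamma(\varepsilon^2)$ with $\alpha,\gamma$ analytic and $\gamma(0)\neq 0$ — which follows from the structure of the Wronskians and the Bessel series — substitute, and observe that the bracketed expression in the lemma equals $\varepsilon^{2q-2p}\big(\alpha(\varepsilon^2,z_1)\partial_\varepsilon[\varepsilon^{?}\cdots]\big)$; carrying out the Wronskian-derivative identity $\partial_\varepsilon A = B$ at the level of these series shows the leading and subleading terms of the antisymmetric combination cancel, leaving a quantity of size $O(\varepsilon^{2q-2p+2})\cdot\varepsilon = O(\varepsilon^{2(s-2m)+3})$ — more than enough. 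I would organize the proof as: (1) establish the series expansions of $J_s(\kappa\sqrt{w})$, $\widetilde J_{s+1}(\kappa\sqrt w)$, $J_s(\kappa\sqrt z)$; (2) factor the homogeneous leading powers out of the columns of each Wronskian and identify the residual leading coefficient as a nonzero confluent Vandermonde determinant; (3) write $A_I^{(s,w)}(\varepsilon,z) = \varepsilon^{q}\alpha(\varepsilon,z)$ and $C_I^{(s,w)}(\varepsilon) = \varepsilon^{p}\gamma(\varepsilon)$ with $\gamma(0)\neq 0$; (4) use $B = \partial_\varepsilon A$ and the resulting Wronskian-of-a-Wronskian cancellation in the antisymmetrized numerator to gain the extra powers; (5) conclude using $s - 2m > -1$.
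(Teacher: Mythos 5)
There is a genuine gap, and it lies in your very first step: the naive power count of the Wronskians is wrong. You extract the leading monomial from each entry and claim the resulting coefficient of $\varepsilon^{P}$ is a nonzero confluent Vandermonde. But each of the functions $J_{s,w_1},\dots,J_{s,w_m}$ (and $J_{s,z}$) has leading behaviour $c(w_i)\,\varepsilon^{s}$ with the \emph{same} exponent $s$, so in the matrix of leading monomials the corresponding rows are all proportional to the single row $\big((s)_0\varepsilon^{s},(s)_1\varepsilon^{s-1},\dots\big)$; the determinant of the leading part therefore vanishes whenever two such rows are present (i.e.\ for $C_I$ with $m\ge 2$, and for $A_I$ already when $m=1$, since it contains both $J_{s,w_1}$ and $J_{s,z}$). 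The true vanishing order is strictly higher than your $P$. The paper obtains it exactly: writing each function as $\varepsilon^{s}f(\varepsilon^2)$ (after replacing $\widetilde J_{s+1,w_i}=sJ_{s,w_i}-\varepsilon J_{s,w_i}'$ by $-\varepsilon J_{s,w_i}'$ via a row operation), factoring $W(gf_1,\dots,gf_n)=g^nW(f_1,\dots,f_n)$ and then using the Wronskian change-of-variable identity for $x\mapsto x^2$, one gets $C_I^{(s,w)}(\varepsilon)\asymp\varepsilon^{2ms+m(2m-1)}$ and $A_I^{(s,w)}(\varepsilon,z)\asymp\varepsilon^{(2m+1)s+m(2m+1)}$ (the nonvanishing at $x=0$ of the reduced Wronskian is where the confluent Vandermonde in the $w_i$ actually enters). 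These differ from your exponents by $4m(m-1)$ and $4m^2$ respectively, so your subsequent arithmetic — the quotient being $O(\varepsilon^{2(s-2m)})$ — is incorrect; the correct exponent is $2s+4m$.

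This matters for the conclusion. With the correct orders, $\varepsilon$ times the quotient is $O(\varepsilon^{2s+4m})$, and $2s+4m>0$ holds unconditionally because in Section 4.1 the standing hypothesis is $s>-1$ (the condition ``$s-2m>-1$'' you invoke is the hypothesis on the parameter of Proposition \ref{abstract-result}, which corresponds to $s+2m$ here, not to the $s$ of Lemma \ref{lem-case1}). So no cancellation in the antisymmetrized numerator is needed at all. The two-power gain you describe from $B_I=\partial_\varepsilon A_I$ killing the top term of $A_I(z_1)B_I(z_2)-A_I(z_2)B_I(z_1)$ is a real phenomenon, but it is being used to patch exponents that are wrong in the first place, and even with it your bound $O(\varepsilon^{2(s-2m)+2})$ would not tend to zero for general $s>-1$. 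To repair the proof you should replace step (2) by the factorization through even power series and the identity $W(f_1\circ\phi,\dots,f_n\circ\phi)=(\phi')^{\binom n2}\,(W(f_1,\dots,f_n)\circ\phi)$ with $\phi(x)=x^2$, prove the nonvanishing of the reduced Wronskian at $0$ (the paper's Lemma \ref{sublem}), and then conclude directly from $2s+4m>0$.
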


\begin{proof}
To simplify notation, let us denote $F_i = J_{s, w_i}$  and $G_i = \widetilde{J}_{s+1, w_i}$. We have $$C_I^{[s,w]}(\varepsilon) = W(F_1, \cdots, F_m, G_1, \cdots, G_m) (\varepsilon).$$ By \eqref{differential-relation-J}, we have $$G_i(\varepsilon)  = - \varepsilon F_i'(\varepsilon) + s F_i(\varepsilon). $$ If we denote $H_i(\varepsilon) = - \varepsilon F_i'(\varepsilon)$, then $$C^{[s, w]}(\varepsilon) = W(F_1,\cdots, F_m, H_1, \cdots, H_m)(\varepsilon).$$ We can write  $F_i(\varepsilon) = \sum_{\nu =0}^\infty \frac{a_\nu^{(i)}}{\nu !} \varepsilon^{2\nu + s}$ and $H_i(\varepsilon)= \sum_{\nu=0}^\infty \frac{b_\nu^{(i)}}{\nu ! } \varepsilon^{2 \nu + s}$,
with $$ a_\nu^{(i) } = \frac{(-1)^{\nu} (\sqrt{w_i})^{2\nu + s} }{2^{2\nu + s} \Gamma(\nu + s+1) } , \quad b_\nu^{(i)} =  - (2\nu +s) a_\nu^{(i)}. $$ Define entire functions:  $$f_i(x) =\sum_{\nu = 0}^\infty \frac{a_\nu^{(i)}}{\nu !} x^{\nu}, \quad h_i(x) = \sum_{\nu = 0}^\infty \frac{b_\nu^{(i)} }{\nu !}x^\nu.$$ Then $F_i(\varepsilon) = \varepsilon^s f_i(\varepsilon^2 )$ and $H_i(\varepsilon) = \varepsilon^s h_i(\varepsilon^2)$.   Using the  identity $$W(g f_1, \cdots, g f_n)(x) = g(x)^n \cdot W(f_1, \cdots, f_n)(x),$$ we obtain that $$C_I^{(s, w)} (\varepsilon) =\varepsilon^{2ms} \cdot W\Big( f_1(x^2), \cdots, f_m(x^2), h_1(x^2), \cdots, h_m(x^2) \Big)(\varepsilon). $$  An application of the following identity $$ \frac{d^{n}}{dx^n} \Big[ f(x^2) \Big] =  n ! \sum_{k = 0}^{\lfloor n/2\rfloor} \frac{(2x)^{n-2k}}{ k ! (n - 2k)!} f^{(n-k)} (x^2)$$ yields  $$C_I^{(s, w)}(\varepsilon) = 2^{m(2m-1)} \varepsilon^{2ms  + m(2m-1)} W(f_1, \cdots, f_m, h_1, \cdots, h_m)(\varepsilon^2). $$

We state the following simple auxiliary
\begin{lem}\label{sublem}
$W(f_1, \cdots, f_m, h_1, \cdots, h_m)(z)$ is an entire function and does not vanish at $z= 0$.
\end{lem}

Before proving Lemma \ref{sublem}, we derive  from it Lemma \ref{lem-case1}. Indeed, from Lemma \ref{sublem}  we have $$C_I^{(s, w)}(\varepsilon) \asymp \varepsilon^{2ms + m(2m-1)}  \text{ as } \varepsilon \to 0.$$ Similarly, $$ A_I^{(s, w)}(\varepsilon, z_i ) \asymp \varepsilon^{(2m + 1)s + m(2m + 1)}  \text{ as } \varepsilon \to 0. $$ By Remark \ref{A-B-relation}, we also have $$B_I^{(s, w)}(\varepsilon, z_i ) \asymp \varepsilon^{(2m + 1)s + m(2m + 1) - 1 }   \text{ as } \varepsilon \to 0. $$ Hence as $ \varepsilon \to 0, $ we have \begin{align*} \left |\frac{A_I^{(s,w)}(\varepsilon, z_1) B_I^{(s,w)}(\varepsilon, z_2) - A_I^{(s,w)}(\varepsilon, z_2) B_I^{(s,w)}(\varepsilon, z_1) }{ \Big[C_I^{(s, w)}(\varepsilon) \Big]^2} \cdot \varepsilon \right | \lesssim \varepsilon^{4m+2s }. \end{align*} Since we always have $4m + 2s  >  0$, Lemma \ref{lem-case1}  is proved.

\bigskip

Now we turn to the proof of the Lemma \ref{sublem}. By definition, we know that $f_i, h_i$ are entire functions, hence $W(f_1, \cdots, f_m, h_1, \cdots, h_m)$ is also entire.  It is easily to see that $W(f_1, \cdots, f_m, h_1, \cdots, h_m) (0)$  equals to  $$\left|\begin{array}{ccccc} a^{(1)}_0 & a^{(1)}_1 & a^{(1)}_2 & \cdots & a^{(1)}_{2m-1} \\ \vdots & \vdots & \vdots &  &\vdots \\ a^{(m)}_0 & a^{(m)}_1 & a^{(m)}_2 & \cdots & a^{(m)}_{2m-1}  \vspace{3mm}   \\ b^{(1)}_0 & b^{(1)}_1 & b^{(1)}_2 & \cdots & b^{(1)}_{2m-1} \\ \vdots & \vdots & \vdots & &\vdots \\ b^{(m)}_0 & b^{(m)}_1 & b^{(m)}_2 & \cdots & b^{(m)}_{2m-1}  \end{array} \right|, $$ which is in turn given by  a non-zero multiple of $  \det \mathscr{W},  $ where $$ \mathscr{W} = \left( \begin{array}{ccccc} 1 & w_1 & w_1^2 & \cdots &  w_1^{2m-1} \\ \vdots & \vdots & \vdots &  & \vdots \\ 1 & w_m & w_m^2 & \cdots & w_m^{2m-1}  \vspace{3mm} \\ 0 & 1 &  2 w_1 & \cdots &  (2m-1) w_1^{2m-2} \\ \vdots & \vdots & \vdots &  & \vdots \\ 0 & 1 & 2 w_m & \cdots & (2m-1) w_m^{2m-2}  \end{array}\right). $$   We claim that $\det \mathscr{W} \ne 0$. Indeed, let $\theta = (\theta_0, \theta_1, \cdots, \theta_{2m-1})^{T} $ be such that $\mathscr{W} \theta = 0$. In other words, we have \begin{align*}  \sum_{k=0}^{2m-1} \theta_k w_i^k = 0, \quad \sum_{k =0}^{2m-1} k \theta_kw_i^{k - 1} = 0, \text{ for } 1 \le i \le m .\end{align*} Let $\Theta$ be the polynomial given by $\Theta(x)  = \sum_{k =0}^{2m-1} \theta_k x^k,$ then the above equations imply that $w_1, \cdots, w_m$ are distinct roots of $\Theta$, each $w_i$ has multiplicity at least 2. Since $\deg \Theta \le 2m-1$, we must have $\Theta \equiv 0$ and hence $\theta=0$. This shows that $\mathscr{W}$ is invertible, hence has a non-zero determinant.

\end{proof}

\subsection{Explicit Kernels for Scaling Limit: Case II} Consider a sequence of $m$-tuples of distinct positive real numbers $r^{(n)} = (r_1^{(n)}, \cdots, r_m^{(n)}) $ and the modified weights $w_s^{(r^{(n)})}$ given as follows: $$w_s^{(r^{(n)})} (t) =\frac{w_s(t)}{\prod_{ i = 1}^m ( 1 + r_i^{(n)} - t)} = \frac{(1 - t)^s}{ \prod_{i = 1}^m ( 1 + r^{(n)}_i - t) }. $$ The $n$-th Christoffel-Darboux kernel associated with $w_s^{(r^{(n)})}$ is denoted by $ \Pi_n^{(n)} (x_1, x_2)$.

We will investigate the scaling limit of $\Pi_n^{(n)} (x_1^{(n)}, x_2^{(n)})$ in the regime: \begin{align}\label{case-II} \begin{split}  x_i^{(n)} = 1 - \frac{z_i }{2n^2}, \quad &  z_i > 0, i = 1, 2. \\  r_i^{(n)} = \frac{v_i}{2n^2},   1 \le i \le m, & \text{ and $v_i> 0$ are all distinct. }  \end{split} \end{align}

\subsubsection{Explicit formulae for orthogonal polynomials and Christoffel-Darboux kernels} The Christoffel-Uvarov formula implies that the following polynomials $q_j^{(n)}$ for $j \ge m$ are orthogonal with respect to  $ w_s^{( r^{(n)} )}$: $$q_j^{(n)} (t) =  \left| \begin{array}{ccc} Q_{j-m}^{(s)} ( 1 + r_1^{(n)} )  & \cdots  &  Q_{j}^{(s)}(1 + r_1^{(n)}) \\ \vdots & & \vdots \\Q_{j-m}^{(s)} ( 1 + r_m^{(n)})  & \cdots  &  Q_{j}^{(s)}(1 + r_m^{(n)})  \\ P_{j-m}^{(s)}(t) & \cdots & P_j^{(s)}(t) \end{array}  \right|. $$ For $0 \le j <  m$, we also denote by $q_j^{(n)}$ the $j$-th monic orthogonal polynomial, here we will not give its explicit formula.

Denote $$ d_j^{(n)}  = \left| \begin{array}{ccc} Q_{j-m}^{(s)} ( 1 + r_1^{(n)})  & \cdots  &  Q_{j-1}^{(s)}(1 + r_1^{(n)}) \\ \vdots & & \vdots \\Q_{j-m}^{(s)} ( 1 + r_m^{(n)})  & \cdots  &  Q_{j-1}^{(s)}(1 + r_m^{(n)}) \end{array}  \right|.$$ Denote by $k_j^{(s, r^{(n)})}$  the leading coefficient of  $q_j^{(n)}$. When $j \ge m$, it is given by $k_j^{(s, r^{(n)})} = d_j^{(n)} k_j^{(s)} $.


\begin{defn}
Define $h_j^{(s, r^{(n)})} = \int_{-1}^1 \Big\{ q_j^{(n)}(t)  \Big\}^2 w_s^{(r^{(n)})} (t) dt. $
\end{defn}

\begin{prop}
For any $j \ge m$, we have $$h_j^{(s, r^{(n)} )  } =  \frac{d_{j}^{(n)} d_{j+1}^{(n)}  k_j^{(s)}h_{j-m}^{(s)} } {k_{j-m}^{(s)}}. $$
\end{prop}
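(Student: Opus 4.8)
The plan is to follow the proof of the Case~I statement on $h_j^{[s,\xi^{(n)}]}$, adapting it to the fact that here $w_s^{(r^{(n)})}$ is obtained from $w_s$ by \emph{dividing} by the polynomial $W(t):=\prod_{i=1}^m(1+r_i^{(n)}-t)$ rather than multiplying. Put $a_i:=1+r_i^{(n)}$, so that $w_s^{(r^{(n)})}=w_s/W$ and the $a_i$ are distinct points outside $[-1,1]$. Expanding the determinant defining $q_j^{(n)}$ along its last row yields
$$q_j^{(n)}(t)=\sum_{\ell=0}^{m} c_\ell\, P_{j-m+\ell}^{(s)}(t),$$
where each coefficient $c_\ell$ is a signed minor of the top $m$ rows, hence independent of $t$. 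We will need only two of them: $c_m=d_j^{(n)}$ (delete the last column) and $c_0=(-1)^m d_{j+1}^{(n)}$, the latter because deleting the last row and the first column of the defining matrix produces exactly $\det(Q_{j-m+1}^{(s)}(a_i),\dots,Q_j^{(s)}(a_i))_{i=1}^{m}=d_{j+1}^{(n)}$.

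The first step is the auxiliary identity
$$\int_{-1}^{1}\frac{q_j^{(n)}(t)(1-t)^s}{a_i-t}\,dt=0,\qquad i=1,\dots,m.$$
Indeed, from the definition of $Q_n^{(s)}$ we have $\int_{-1}^{1}\frac{(1-t)^s P_n^{(s)}(t)}{x-t}\,dt=2(x-1)^s Q_n^{(s)}(x)$, so the integral above equals $2(a_i-1)^s\sum_{\ell=0}^{m} c_\ell Q_{j-m+\ell}^{(s)}(a_i)$, and $\sum_\ell c_\ell Q_{j-m+\ell}^{(s)}(a_i)$ is the Laplace expansion along the last row of the determinant obtained from the one defining $q_j^{(n)}$ by replacing its last row with $(Q_{j-m}^{(s)}(a_i),\dots,Q_j^{(s)}(a_i))$; that row then coincides with the $i$-th row of the matrix, so the determinant vanishes.

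Next, write the Euclidean division $q_j^{(n)}(t)=W(t)\varphi(t)+\rho(t)$ with $\deg\varphi=j-m$ (this uses $j\ge m$) and $\deg\rho\le m-1$. Since $W$ has leading coefficient $(-1)^m$ and $q_j^{(n)}$ has leading coefficient $k_j^{(s,r^{(n)})}=d_j^{(n)}k_j^{(s)}$, the leading coefficient of $\varphi$ equals $(-1)^m d_j^{(n)}k_j^{(s)}$. Partial fractions give $q_j^{(n)}/W=\varphi+\sum_{i}f_i/(a_i-t)$ for suitable constants $f_i$, so
$$h_j^{(s,r^{(n)})}=\int_{-1}^{1} q_j^{(n)}(t)\,\frac{q_j^{(n)}(t)}{W(t)}\,w_s(t)\,dt=\int_{-1}^{1} q_j^{(n)}(t)\varphi(t)w_s(t)\,dt+\sum_{i=1}^{m} f_i\int_{-1}^{1}\frac{q_j^{(n)}(t)(1-t)^s}{a_i-t}\,dt,$$
and the last sum vanishes by the auxiliary identity. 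In the remaining integral substitute $q_j^{(n)}=\sum_{\ell} c_\ell P_{j-m+\ell}^{(s)}$; since $\deg\varphi=j-m$ and $P_{j-m+\ell}^{(s)}$ is $w_s$-orthogonal to every polynomial of degree $<j-m+\ell$, only $\ell=0$ survives, giving
$$\int_{-1}^{1} q_j^{(n)}\varphi w_s\,dt=c_0\cdot\big(\text{leading coefficient of }\varphi\big)\cdot\int_{-1}^{1}P_{j-m}^{(s)}(t)\,t^{j-m}w_s(t)\,dt=(-1)^m d_{j+1}^{(n)}\cdot(-1)^m d_j^{(n)}k_j^{(s)}\cdot\frac{h_{j-m}^{(s)}}{k_{j-m}^{(s)}},$$
which is exactly the asserted formula.

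The computation is routine once this framework is in place; the delicate points are the sign bookkeeping in the two Laplace expansions and, more importantly, recognizing that the correct analogue of the Case~I step ``multiply $\pi_j^{(n)}$ by $\prod_i(\xi_i-t)^2$'' is here ``divide $q_j^{(n)}$ by $W$ and discard the genuinely fractional part $\sum_i f_i/(a_i-t)$'', which is annihilated after integration by the auxiliary identity. I would also make sure the normalization in the definition of $Q_n^{(s)}$ is used consistently, so that this identity carries no spurious constant.
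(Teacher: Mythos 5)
Your proof is correct, but it clears the denominator $W(t)=\prod_i(1+r_i^{(n)}-t)$ by a genuinely different mechanism than the paper. The paper's proof leans on the orthogonality of $q_j^{(n)}$ with respect to the \emph{deformed} weight $w_s^{(r^{(n)})}$ (taken for granted from the Christoffel--Uvarov formula stated just before): modulo polynomials of degree $<j$, which integrate to zero against $q_j^{(n)}w_s^{(r^{(n)})}$, it replaces one factor $q_j^{(n)}(t)$ first by $d_j^{(n)}k_j^{(s)}t^j$ and then by $(-1)^m d_j^{(n)}k_j^{(s)}W(t)\,t^{j-m}$, so that $W(t)$ cancels the denominator of the weight in one stroke; the endgame (expanding the remaining $q_j^{(n)}$ in the $P_{j-m+\ell}^{(s)}$, reading off the cofactor $(-1)^{m}d_{j+1}^{(n)}$ of $P_{j-m}^{(s)}$, and evaluating $\int P_{j-m}^{(s)}t^{j-m}w_s\,dt=h_{j-m}^{(s)}/k_{j-m}^{(s)}$) is identical to yours, including the cancellation of the two signs $(-1)^m$. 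You never invoke the deformed orthogonality: instead you divide $q_j^{(n)}$ by $W$, split off the fractional part by partial fractions, and annihilate it with the auxiliary identity $\int_{-1}^1 q_j^{(n)}(t)(1-t)^s/(a_i-t)\,dt=0$, proved by the row-repetition trick on the defining determinant. That identity plus your partial-fraction step amounts to a proof of the Christoffel--Uvarov orthogonality itself, so your argument is more self-contained than the paper's (which asserts that orthogonality without proof), at the cost of an extra lemma and slightly heavier bookkeeping; both arguments share the implicit nondegeneracy assumption $d_j^{(n)}\neq 0$ needed for $q_j^{(n)}$ to have exact degree $j$.
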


\begin{proof}
Let $j \ge m$, by the orthogonality, we have \begin{align*} & h_{j}^{(s,r^{(n)})}   \\ = &  \int q_j^{(n)}(t) d_{j}^{(n)} P_j^{(s)}(t) w_s^{(r^{(n)})}(t) dt  = d_{j}^{(n)} k_j^{(s)}  \int q_j^{(n)}(t) t^j w_s^{(r^{(n)})}(t) dt  \\  = & d_{j}^{(n)} k_j^{(s)}  \int q_j^{(n)}(t) (-1)^m \prod_{i = 1}^m (1+r_i^{(n)} - t)  t^{j-m} w_s^{(r^{(n)})}(t) dt   \\  = & (-1)^m d_{j}^{(n)} k_j^{(s)}  \int q_j^{(n)}(t) t^{j-m} w_s(t) dt \\  =  & (-1)^m d_{j}^{(n)} k_j^{(s)}  \int (-1)^{m + 2} d_{j+1}^{(n)} P_{j-m}^{(s)} (t) t^{j-m} w_s(t) dt  \\  = & \frac{d_{j}^{(n)} d_{j+1}^{(n)}  k_j^{(s)} } {k_{j-m}^{(s)}} \int \Big\{ P_{j-m}^{(s)}(t) \Big\}^2  w_s(t) dt  \\  = &  \frac{d_{j}^{(n)} d_{j+1}^{(n)}  k_j^{(s)}h_{j-m}^{(s)} } {k_{j-m}^{(s)}}  .\end{align*}
\end{proof}

By change of variables, $x_i^{(n)} = 1 - \frac{z_i}{2n^2} $, and let $r^{(n)}$ be as in the regime \eqref{case-II}, the Christoffel-Darboux kernels are given by the formula \begin{align}\label{CD-2} \begin{split} & \widetilde{\Pi}_n^{(n)} ( z_1, z_2)  = \frac{(z_1 z_2 )^{\frac{s}{2}} }{ \prod_{ i = 1}^m ( v_i+ z_1 )^{\frac{1}{2}} (v_i + z_2)^{\frac{1}{2}}}  \Sigma_n (z_1, z_2 ), \end{split} \end{align} where \begin{align}\label{sigma-1}  \Sigma_n(z_1, z_2)  = (2n^2)^{m-s-1}   \sum_{j = 0}^{n-1} \frac{q_j^{(n)} ( 1 - \frac{z_1}{2n^2} ) q_j^{(n)} (1 - \frac{z_2 }{2n^2} )  }{   \frac{d_{j}^{(n)} d_{j+1}^{(n)}  k_j^{(s)}h_{j-m}^{(s)} } {k_{j-m}^{(s)}}   }, \end{align} or equivalently \begin{align} \begin{split}  & \Sigma_n(z_1, z_2) \\   = &  \label{sigma-2} \frac{(2n^2)^{m-s}}{   \big[d_{n}^{(n)}\big]^2 \frac{h_{n-1 -m }^{(s)} k_{n}^{(s)} }{ k_{n-1-m}^{(s)}}   } \cdot \frac{
q_{n}^{(n)}(x_1^{(n)} ) q_{n-1}^{(n)}(x_2^{(n)} ) - q_{n}^{(n)}(x_2^{(n)}) q_{n-1}^{(n)}(x_1^{(n)} )
}{
z_2 - z_1} .  \end{split}\end{align}

\subsubsection{Scaling limits}  Now we investigate the scaling limits.

\begin{prop}\label{AC-matrix}
 In the regime \eqref{case-II} we have $$\lim_{n \to \infty} n^{\frac{m(m-1)}{2} - ms } d_{\kappa_n}^{(n)}  = 2^{ms} (v_1 \cdots v_m)^{- \frac{s}{2} } C_{II}^{(s, v)}(\kappa), $$

 $$\lim_{n \to \infty} n^{\frac{m(m+1)}{2} - (m+1)s } q_{\kappa_n}^{(n)} (x_i^{(n)})   = 2^{(m+1)s}  (v_1 \cdots v_m)^{- \frac{s}{2} } z_i^{-\frac{s}{2}} A_{II}^{(s, v)}(\kappa, z_i), $$ where $$C_{II}^{(s, v)} (\kappa) = W\Big(K_{s, v_1}, \cdots, K_{s, v_m} \Big)(\kappa),$$ $$A_{II}^{(s,v)}(\kappa, z) =W\Big(K_{s, v_1}, \cdots, K_{s, v_m}, J_{s, z}\Big ) (\kappa),  $$ and   $K_{s, v_i} (\kappa) = K_s(\kappa \sqrt{v_i})$,  $J_{s, z}(\kappa) = J_s(\kappa \sqrt{z})$.



\end{prop}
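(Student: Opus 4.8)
The plan is to mimic almost verbatim the argument already used in Case I (Propositions \ref{C}, \ref{A}), replacing the asymptotics for differences of Jacobi polynomials by the asymptotics for differences of Jacobi functions of the second kind $Q_n^{(s)}$ established in Proposition \ref{asymp-diff-Q}. Recall that by definition $d_{\kappa_n}^{(n)}$ is an $m\times m$ determinant whose $(i,\ell)$-entry is $Q_{\kappa_n-m+\ell-1}^{(s)}(1+r_i^{(n)})$ for $1\le \ell\le m$; and $q_{\kappa_n}^{(n)}(x_i^{(n)})$ is an $(m+1)\times(m+1)$ determinant with the same $Q$-entries in the first $m$ rows and $P_{\kappa_n-m+\ell-1}^{(s)}(x_i^{(n)})$ in the last row. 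The first step is to pass from consecutive-index entries to higher differences: by multilinearity of the determinant in the columns, performing the column operations $C_\ell\mapsto C_\ell-C_{\ell-1}$ repeatedly turns the matrix of $Q_{\kappa_n+k}^{(s)}$'s (resp. of $P_{\kappa_n+k}^{(s)}$'s) into the matrix of higher differences $\Delta_{Q,\kappa_n}^{(s;\,\ell)}$ (resp. $\Delta_{P,\kappa_n}^{(s;\,\ell)}$), $0\le\ell\le m-1$ (resp. $\le m$), without changing the determinant up to sign — exactly as in the proof of Proposition \ref{C}.

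Next I would insert the diagonal normalizing matrices. For $d_{\kappa_n}^{(n)}$: multiply on the right by $\mathrm{diag}(n^{-s},n^{1-s},\dots,n^{m-1-s})$ and on the left by the identity, so the total power of $n$ accumulated is $-ms+\binom{m}{2}=\frac{m(m-1)}{2}-ms$; each entry $n^{\ell-s}\Delta_{Q,\kappa_n}^{(s;\,\ell)}(1+r_i^{(n)})$ converges, by Proposition \ref{asymp-diff-Q} applied with $w=v_i$, to $2^s v_i^{(\ell-s)/2}K_s^{(\ell)}(\kappa\sqrt{v_i})$. Factoring $2^s v_i^{-s/2}$ out of the $i$-th row and using the Wronskian identity $W(gf_1,\dots,gf_m)=g^mW(f_1,\dots,f_m)$ style bookkeeping — here the column structure directly gives the Wronskian in the variable $\kappa$ of the functions $\kappa\mapsto K_s(\kappa\sqrt{v_i})=K_{s,v_i}(\kappa)$ — yields the stated limit $2^{ms}(v_1\cdots v_m)^{-s/2}C_{II}^{(s,v)}(\kappa)$. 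For $q_{\kappa_n}^{(n)}(x_i^{(n)})$ the bookkeeping is the same but with one extra row and column: multiply on the right by $\mathrm{diag}(n^{-s},\dots,n^{m-s})$ (total extra $\frac{m(m+1)}{2}-(m+1)s$), the first $m$ rows give the $K_s^{(\ell)}(\kappa\sqrt{v_i})$ as before, and the last row gives $n^{\ell-s}\Delta_{P,\kappa_n}^{(s;\,\ell)}(x_i^{(n)})\to 2^s z_i^{(\ell-s)/2}J_s^{(\ell)}(\kappa\sqrt{z_i})$ by Proposition \ref{jacobi-asymp}. Pulling $2^s v_i^{-s/2}$ out of each of the first $m$ rows and $2^s z_i^{-s/2}$ out of the last row produces $2^{(m+1)s}(v_1\cdots v_m)^{-s/2}z_i^{-s/2}A_{II}^{(s,v)}(\kappa,z_i)$. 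The claim $\frac{\partial}{\partial\kappa}A_{II}=$ (the analogue of $B_{II}$) again follows by differentiating a Wronskian column by column, just as in Remark \ref{A-B-relation}.

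The uniformity statement is inherited directly: Proposition \ref{asymp-diff-Q} gives convergence uniform for $\kappa$ in a compact subset of $(0,\infty)$ (with the small technical point that it is stated for $\kappa\in[\varepsilon,1]$, but the general compact case follows by the standard rescaling already noted for $\kappa_n=\lfloor\kappa n\rfloor$), and Proposition \ref{jacobi-asymp} likewise; since a determinant is a polynomial in its entries, uniform convergence of the entries on a compact set forces uniform convergence of the determinant. The only genuine points requiring care — and the place I expect the bulk of the (routine) work — are: (i) checking that the column-difference operations really do carry the consecutive-index determinant to the higher-difference determinant with the correct sign, since here the columns run over indices $\kappa_n-m,\dots,\kappa_n$ rather than $\kappa_n,\dots,\kappa_n+m$, so one must be careful about which end the differences accumulate at (this is handled exactly as in Proposition \ref{B}, where the analogous indexing subtlety with $\Delta_{P,\kappa_n-1}^{[2m+1]}$ arose); and (ii) bookkeeping the exact power of $2$ and of $(v_1\cdots v_m)$ and $z_i$. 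None of this is conceptually hard once Propositions \ref{jacobi-asymp} and \ref{asymp-diff-Q} are in hand; the proof is therefore a near-mechanical transcription of the Case I argument, which is presumably why the authors will write only ``the proof is similar to that of Proposition \ref{C}.''
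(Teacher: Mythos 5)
Your proposal is correct and follows essentially the same route as the paper: rewrite $d_{\kappa_n}^{(n)}$ and $q_{\kappa_n}^{(n)}(x_i^{(n)})$ as determinants in the higher differences $\Delta_{Q,\kappa_n-m}^{(s;\,\ell)}$ and $\Delta_{P,\kappa_n-m}^{(s;\,\ell)}$ (using that the difference operators act with the same coefficients on the $Q$- and $P$-rows), then normalize by the diagonal matrix $\mathrm{diag}(n^{-s},\dots)$ and pass to the limit via Propositions \ref{jacobi-asymp}, \ref{asymp-Q} and \ref{asymp-diff-Q}, exactly as in Proposition \ref{C}. The power counting and the constants $2^{ms}(v_1\cdots v_m)^{-s/2}$, $2^{(m+1)s}z_i^{-s/2}$ all check out, so nothing further is needed.
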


\begin{proof}
For $\ell \ge 1$, we have $$\Delta_{Q, n}^{(s, \ell)} = Q_{n + \ell}^{(s)} + (-1)^{\ell} Q_n^{(s)} + \text{linear combination of $Q_{n+1}^{(s)}, \cdots, Q_{n+\ell-1 }^{(s)}$}.$$ The same is true for $\Delta_{P, n}^{(s, \ell)}$ and with the same coefficients. Hence for $k_n \ge m$, we have  \begin{align*} d_{\kappa_n}^{(n)}   =   \left| \begin{array}{ccc} \Delta_{Q, \kappa_n-m}^{(s, 0)} ( 1 + r_1^{(n)})  & \cdots  &  \Delta_{Q, \kappa_n-m}^{(s, m-1)} ( 1 + r_1^{(n)}) \\ \vdots & & \vdots \\ \Delta_{Q, \kappa_n-m}^{(s, 0)} ( 1 + r_m^{(n)})  & \cdots  &  \Delta_{Q, \kappa_n-m}^{(s, m-1)} ( 1 + r_m^{(n)})  \end{array}  \right|;  \end{align*}

\begin{align*} q_{\kappa_n}^{(n)} (x_i^{(n)} )  =     \left| \begin{array}{ccc} \Delta_{Q, \kappa_n-m}^{(s, 0)} ( 1 + r_1^{(n)})  & \cdots  &  \Delta_{Q, \kappa_n-m}^{(s, m)} ( 1 + r_1^{(n)}) \\ \vdots & & \vdots \\ \Delta_{Q, \kappa_n-m}^{(s, 0)} ( 1 + r_m^{(n)})  & \cdots  &  \Delta_{Q, \kappa_n-m}^{(s, m)} ( 1 + r_m^{(n)})   \vspace{3mm} \\ \Delta_{P, \kappa_n-m}^{(s, 0)} ( x_i^{(n)})  & \cdots  &  \Delta_{P, \kappa_n-m}^{(s, m)} ( x_i^{(n)})   \end{array}  \right| . \end{align*}
The proposition is completely proved by applying the same arguments as in the proof of Proposition \ref{C} and by applying Propositions \ref{jacobi-asymp}, \ref{der-asymp}, \ref{asymp-Q} and \ref{asymp-diff-Q}.
\end{proof}

\begin{prop}\label{B-matrix}
In the regime \eqref{case-II}, we have \begin{align*}  & \lim_{n \to \infty} n^{\frac{m(m+1)}{2} - (m + 1)s + 1 } \Big[ q_{\kappa_n}^{(n )} (x_i^{(n)})  -  q_{\kappa_n-1}^{(n)} (x_i^{(n)})  \Big] \\ &  = 2^{(m +1)s} (v_1 \cdots v_m)^{-\frac{s}{2}} z_i^{-\frac{s}{2}}  B_{II}^{(s, v)}(\kappa, z_i),  \end{align*}  where \begin{align*} & B_{II}^{(s, v)}(\kappa, z) = \frac{\partial}{\partial \kappa} A_{II}^{(s, v)}(\kappa, z) \\ = & \left| \boldsymbol{\phi}_{s, z}(\kappa) , \boldsymbol{\phi}_{s, z}'(\kappa), \cdots, \boldsymbol{\phi}_{s, z}^{(m-1)}(\kappa), \boldsymbol{\phi}_{s, z}^{(m+1)}(\kappa)    \right|, \end{align*} and  $\boldsymbol{\phi}_{s, z}(\kappa)$ is   the column vector $\Big(K_s( \kappa \sqrt{v_1}), \cdots K_s( \kappa \sqrt{v_m}), J_s( \kappa \sqrt{z})\Big)^T$.
\end{prop}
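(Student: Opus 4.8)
The plan is to reduce the statement to an application of the difference identity \eqref{rec-sec-diff} (together with its analogue for mixed $P/Q$ columns) and the asymptotics already established in Propositions \ref{jacobi-asymp}, \ref{der-asymp}, \ref{asymp-Q} and \ref{asymp-diff-Q}, exactly in the spirit of Proposition \ref{AC-matrix}. First I would expand the difference $q_{\kappa_n}^{(n)}(x_i^{(n)}) - q_{\kappa_n-1}^{(n)}(x_i^{(n)})$ as a single $(m+1)\times(m+1)$ determinant. Using the elementary observation (already used in the proof of Proposition \ref{B}) that
\[
\Delta_{P, \kappa_n-1-m}^{(s;\, m+1)} = P_{\kappa_n}^{(s)} + (-1)^{m+1} P_{\kappa_n-1}^{(s)} + \text{lin.\ comb.\ of } P_{\kappa_n-m}^{(s)}, \dots, P_{\kappa_n-1}^{(s)},
\]
and the same identity with the same coefficients for the $Q$'s, column operations turn $q_{\kappa_n}^{(n)}(x_i^{(n)}) - q_{\kappa_n-1}^{(n)}(x_i^{(n)})$ into the determinant whose first $m$ columns carry $\Delta_{Q,\kappa_n-m}^{(s;\,\ell)}$ (resp.\ $\Delta_{P,\kappa_n-m}^{(s;\,\ell)}$ in the last row) for $\ell = 0, \dots, m-1$, and whose last column carries $\Delta_{Q,\kappa_n-m}^{(s;\, m+1)}$ (resp.\ $\Delta_{P,\kappa_n-m}^{(s;\, m+1)}$).

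Next I would rescale: multiply the underlying matrix on the right by $\mathrm{diag}(n^{-s}, n^{1-s}, \dots, n^{m-1-s}, n^{m+1-s})$, taking into account that the determinant picks up the compensating power $n^{\frac{m(m+1)}{2} - (m+1)s + 1}$ (the exponent sum is $-s + (1-s) + \dots + (m-1-s) + (m+1-s) = \frac{m(m+1)}{2} - (m+1)s + 1$, matching the claimed normalization). Applying Proposition \ref{asymp-diff-Q} to the first $m$ rows in each of the first $m$ columns gives, up to the common prefactor $2^{s}v_i^{-s/2}$, the entries $K_s^{(\ell)}(\kappa\sqrt{v_i})$ for $\ell = 0, \dots, m-1$, and in the last column the entry $K_s^{(m+1)}(\kappa\sqrt{v_i})$; Proposition \ref{jacobi-asymp} does the same for the bottom row, producing $J_s^{(\ell)}(\kappa\sqrt{z_i})$ and $J_s^{(m+1)}(\kappa\sqrt{z_i})$ (here one also uses the chain-rule bookkeeping $\frac{d}{d\kappa} K_s(\kappa\sqrt{v})$ versus derivatives in the argument, but since $\sqrt{v_i}$ and $\sqrt{z_i}$ are the same across rows of a given column this only contributes an overall nonzero constant that I would absorb into the prefactor). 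Collecting the scalar prefactors $2^{s}$ from each of the $m+1$ rows yields the factor $2^{(m+1)s}(v_1\cdots v_m)^{-s/2}z_i^{-s/2}$, and the limiting determinant is precisely $\bigl|\boldsymbol{\phi}_{s,z_i}(\kappa),\boldsymbol{\phi}_{s,z_i}'(\kappa),\dots,\boldsymbol{\phi}_{s,z_i}^{(m-1)}(\kappa),\boldsymbol{\phi}_{s,z_i}^{(m+1)}(\kappa)\bigr|$. Finally, the identification of this determinant with $\frac{\partial}{\partial\kappa}A_{II}^{(s,v)}(\kappa,z)$ follows by differentiating the Wronskian $A_{II}^{(s,v)}(\kappa,z) = W(K_{s,v_1},\dots,K_{s,v_m},J_{s,z})(\kappa)$ column-by-column: all but one of the $m+1$ terms vanish because they contain a repeated column, and the surviving term is exactly the displayed determinant — this is the same mechanism as Remark \ref{A-B-relation}.

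The main obstacle I anticipate is purely bookkeeping rather than conceptual: one must verify that the column reduction producing $\Delta^{(s;\,m+1)}$ in the last column (rather than $\Delta^{(s;\,m)}$) is legitimate, i.e.\ that subtracting $q_{\kappa_n-1}^{(n)}$ from $q_{\kappa_n}^{(n)}$ really does realize the higher difference of order $m+1$ after absorbing the intermediate $P_{\kappa_n-m+1}^{(s)},\dots,P_{\kappa_n-1}^{(s)}$ columns into the existing columns, and similarly that the uniformity of all four asymptotic propositions (valid for $\kappa$ in a compact subset of $(0,\infty)$ and the relevant scaling variable in a compact set) is enough to pass to the limit inside the determinant. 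Since all the needed ingredients are already in place, I would present this proof in the compressed style of Proposition \ref{AC-matrix}: \emph{``The proof is obtained by the same arguments as in the proof of Proposition \ref{AC-matrix}, using in addition the elementary identity for $\Delta_{P,\kappa_n-1-m}^{(s;\,m+1)}$ from the proof of Proposition \ref{B} and the differentiation-of-Wronskian identity from Remark \ref{A-B-relation}.''}
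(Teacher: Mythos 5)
Your proposal follows essentially the same route as the paper: there the difference $q_{\kappa_n}^{(n)}(x_i^{(n)})-q_{\kappa_n-1}^{(n)}(x_i^{(n)})$ is likewise written as a single determinant whose last column is $\beta_{\kappa_n}^{(n)}+(-1)^{m+1}\beta_{\kappa_n-1-m}^{(n)}$, reduced by column operations to the column of $(m+1)$-st differences $\Delta_{Q,\,\kappa_n-1-m}^{(s,\,m+1)}$ (resp.\ $\Delta_{P,\,\kappa_n-1-m}^{(s,\,m+1)}$ in the last row), and then handled with Propositions \ref{jacobi-asymp}, \ref{der-asymp}, \ref{asymp-Q} and \ref{asymp-diff-Q} under exactly the rescaling you describe. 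One index slip to correct: in your displayed identity the distinguished lowest term of $\Delta_{P,\,\kappa_n-1-m}^{(s;\,m+1)}$ must be $(-1)^{m+1}P_{\kappa_n-1-m}^{(s)}$, not $(-1)^{m+1}P_{\kappa_n-1}^{(s)}$ --- it is precisely this term that matches the contribution $(-1)^{m+1}\beta_{\kappa_n-1-m}^{(n)}$ coming from $q_{\kappa_n-1}^{(n)}$, while the intermediate terms $P_{\kappa_n-m}^{(s)},\dots,P_{\kappa_n-1}^{(s)}$ are the ones absorbed into the existing columns.
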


\begin{proof}
To simplify notation, we show the proposition in the case $\kappa_n = n$, the proof in the general case is similar.  Define column vector $$ \beta_j^{(n)}(t) = \Big( Q_j^{(s)}(1 + r_1^{(n)}), \cdots, Q_j^{(s)}(1  + r_m^{(n)}), P_j^{(s)}(t) \Big)^T.$$ Then for $ i = 1, 2$, \begin{align*}  q_n^{(n)} (x_i^{(n)})   = & \left| \begin{array}{cccc} \beta_{n-m}^{(n)}(x_i^{(n)}) &\cdots & \beta_{n-1}^{(n)}(x_i^{(n)})  &  \beta_{n}^{(n)}(x_i^{(n)})   \end{array} \right|; \\ q_{n-1}^{(n)} (x_i^{(n)})  =&  \left| \begin{array}{cccc} \beta_{n-1-m}^{(n)}(x_i^{(n)}) & \cdots  & \beta_{n-2}^{(n)}(x_i^{(n)})  & \beta_{n-1}^{(n)}(x_i^{(n)})   \end{array} \right|  \\  = & (-1)^m  \left| \begin{array}{cccc} \beta_{n-m}^{(n)}(x_i^{(n)}) & \cdots & \beta_{n-1}^{(n)}(x_i^{(n)}) & \beta_{n-1-m}^{(n)}(x_i^{(n)})    \end{array} \right|. \end{align*} Hence \begin{align*}   & q_n^{(n)} (x_i^{(n)})  -  q_{n-1}^{(n)} (x_i^{(n)})  \\   =&   \left| \begin{array}{cccc} \beta_{n-m}^{(n)}(x_i^{(n)}) &\cdots & \beta_{n-1}^{(n)}(x_i^{(n)})  &  \beta_{n}^{(n)}(x_i^{(n)}) + (-1)^{m+1} \beta_{n-1-m}^{(n)}(x_i^{(n)})    \end{array} \right| \\ = & \left| \begin{array}{cccc}\Delta_{Q,n-m}^{(s, 0)} ( 1 + r_1^{(n)}) & \cdots & \Delta_{Q,n-m}^{(s, m-1)} ( 1 + r_1^{(n)}) & \Delta_{Q,n-1-m}^{(s, m+1)} ( 1 + r_1^{(n)}) \\\vdots &  & \vdots & \vdots \\\Delta_{Q,n-m}^{(s, 0)} ( 1 + r_m^{(n)}) & \cdots & \Delta_{Q,n-m}^{(s, m-1)} ( 1 + r_m^{(n)}) & \Delta_{Q,n-1-m}^{(s, m+1)} ( 1 + r_m^{(n)}) \vspace{3mm}\\ \Delta_{P,n-m}^{(s, 0)} ( x_i^{(n)}) & \cdots & \Delta_{P,n-m}^{(s; m-1)} ( x_i^{(n)}) & \Delta_{P,n-1-m}^{(s; m+1)} ( x_i^{(n)})\end{array} \right| .\end{align*}
We finish the proof by using Propositions \ref{jacobi-asymp}, \ref{der-asymp}, \ref{asymp-Q} and \ref{asymp-diff-Q}.
\end{proof}

Combining Propositions \ref{AC-matrix} and \ref{B-matrix}, we obtain
\begin{cor}\label{corollary}
In the regime \eqref{case-II}, we have \begin{align*}  & \lim_{n \to \infty} n^{m(m+1) - 2(m+1)s +1} \Big[  q_{\kappa_n}^{(n)}(x_1^{(n)}) q_{\kappa_n-1}^{(n)}(x_2^{(n)}) - q_{\kappa_n}^{(n)}(x_2^{(n)}) q_{\kappa_n-1}^{(n)}(x_1^{(n)}) \Big]  \\   & =  2^{2(m+1)s} (v_1 \cdots v_m)^{-s} z_1^{-\frac{s}{2}}z_2^{-\frac{s}{2}} \left| \begin{array}{cc}  A_{II}^{(s, v)}(\kappa, z_1)  &  - B_{II}^{(s,v)}(\kappa, z_1)\vspace{2mm}\\ A_{II}^{(s, v)}(\kappa, z_2)  &  - B_{II}^{(s,v)}(\kappa, z_2)   \end{array} \right|.\end{align*}
\end{cor}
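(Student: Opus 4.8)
The plan is to deduce the corollary from Propositions \ref{AC-matrix} and \ref{B-matrix} by a short bilinear manipulation, exactly in the spirit of the argument for Theorem \ref{thm-case1}. First I would rewrite the antisymmetric combination appearing on the left-hand side as
$$
q_{\kappa_n}^{(n)}(x_1^{(n)}) q_{\kappa_n-1}^{(n)}(x_2^{(n)}) - q_{\kappa_n}^{(n)}(x_2^{(n)}) q_{\kappa_n-1}^{(n)}(x_1^{(n)})
= q_{\kappa_n}^{(n)}(x_2^{(n)})\big[q_{\kappa_n}^{(n)}(x_1^{(n)}) - q_{\kappa_n-1}^{(n)}(x_1^{(n)})\big] - q_{\kappa_n}^{(n)}(x_1^{(n)})\big[q_{\kappa_n}^{(n)}(x_2^{(n)}) - q_{\kappa_n-1}^{(n)}(x_2^{(n)})\big],
$$
which isolates, in each of the two terms, a ``value factor'' $q_{\kappa_n}^{(n)}(x_j^{(n)})$ and a ``difference factor'' $q_{\kappa_n}^{(n)}(x_j^{(n)})-q_{\kappa_n-1}^{(n)}(x_j^{(n)})$.

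Second, I would distribute the normalizing power: since $m(m+1) - 2(m+1)s + 1 = \big(\tfrac{m(m+1)}{2} - (m+1)s\big) + \big(\tfrac{m(m+1)}{2} - (m+1)s + 1\big)$, I attach the factor $n^{\frac{m(m+1)}{2} - (m+1)s}$ to each value factor and the factor $n^{\frac{m(m+1)}{2} - (m+1)s + 1}$ to each difference factor. By Proposition \ref{AC-matrix} the normalized value factor converges to $2^{(m+1)s}(v_1\cdots v_m)^{-\frac{s}{2}} z_j^{-\frac{s}{2}} A_{II}^{(s,v)}(\kappa, z_j)$, and by Proposition \ref{B-matrix} the normalized difference factor converges to $2^{(m+1)s}(v_1\cdots v_m)^{-\frac{s}{2}} z_j^{-\frac{s}{2}} B_{II}^{(s,v)}(\kappa, z_j)$; since both limits exist, each of the two products converges to the product of its limits, and hence so does their difference.

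Third, collecting terms yields
$$
2^{2(m+1)s}(v_1\cdots v_m)^{-s} z_1^{-\frac{s}{2}} z_2^{-\frac{s}{2}}\big[ A_{II}^{(s,v)}(\kappa, z_2) B_{II}^{(s,v)}(\kappa, z_1) - A_{II}^{(s,v)}(\kappa, z_1) B_{II}^{(s,v)}(\kappa, z_2)\big],
$$
and it remains only to observe that the bracketed expression equals the determinant
$$
\left| \begin{array}{cc} A_{II}^{(s,v)}(\kappa, z_1) & -B_{II}^{(s,v)}(\kappa, z_1) \\ A_{II}^{(s,v)}(\kappa, z_2) & -B_{II}^{(s,v)}(\kappa, z_2) \end{array} \right|,
$$
which is the asserted formula. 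There is really no serious obstacle here: the analytic content is entirely contained in Propositions \ref{AC-matrix} and \ref{B-matrix}, and the only points requiring a little care are the accounting of the exponent $\tfrac{m(m+1)}{2}-(m+1)s$ against its unit shift, and the sign bookkeeping when passing from the bilinear bracket to the $2\times 2$ determinant. If one additionally wants the convergence to be uniform in $\kappa$ on compact subsets of $(0,\infty)$ — as will be convenient when feeding this corollary into an integral representation later — this is automatic, since the convergences supplied by Propositions \ref{AC-matrix} and \ref{B-matrix} are themselves uniform on such sets.
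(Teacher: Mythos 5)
Your proposal is correct and is essentially the paper's own argument: the bilinear rearrangement you write down is exactly the expansion of the paper's $2\times 2$ determinant after subtracting the first column from the second, and both proofs then conclude by applying Propositions \ref{AC-matrix} and \ref{B-matrix} to the value and difference factors respectively. The exponent split and the sign bookkeeping in your write-up check out.
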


 \begin{proof} We first write $q_{\kappa_n}^{(n)}(x_1^{(n)}) q_{\kappa_n-1}^{(n)}(x_2^{(n)}) - q_{\kappa_n}^{(n)}(x_2^{(n)}) q_{\kappa_n-1}^{(n)}(x_1^{(n)})$ as \begin{align*} \left| \begin{array}{cc} q_{\kappa_n}^{(n)}(x_1^{(n)})& q_{\kappa_n-1}^{(n)}(x_1^{(n)}) \vspace{3mm} \\ q_{\kappa_n}^{(n)}(x_2^{(n)})& q_{\kappa_n-1}^{(n)}(x_2^{(n)})   \end{array} \right| =  \left| \begin{array}{cc} q_{\kappa_n}^{(n)}(x_1^{(n)})& q_{\kappa_n-1}^{(n)}(x_1^{(n)}) - q_{\kappa_n}^{(n)}(x_1^{(n)})  \vspace{3mm} \\ q_{\kappa_n}^{(n)}(x_2^{(n)})& q_{\kappa_n-1}^{(n)}(x_2^{(n)}) - q_{\kappa_n}^{(n)}(x_2^{(n)})   \end{array} \right|. \end{align*} The corollary now follows from  Propositions \ref{AC-matrix} and \ref{B-matrix}.
\end{proof}

\begin{thm}\label{thm-case2-1}
In the regime \eqref{case-II}, we obtain the scaling limit
\begin{align*}  & \Pi_\infty^{(s, v)} (z_1, z_2)  : =  \lim_{n \to \infty} \widetilde{\Pi}_n^{(n)} (z_1, z_2) \\  =&  \frac{ A_{II}^{(s,v)}(1, z_1) B_{II}^{(s,v)}(1, z_2) -  A_{II}^{(s,v)}(1, z_2) B_{II}^{(s,v)}(1, z_1)}{ 2 \prod_{i=1}^m \sqrt{(v_i + z_1) ( v_i + z_2)} \cdot \big[  C_{II}^{(s,v)}(1)\big]^2 \cdot (z_1- z_2)}. \end{align*}
\end{thm}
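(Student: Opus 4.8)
The plan is to pass to the limit in the explicit formula \eqref{CD-2} for the Christoffel--Darboux kernel $\widetilde{\Pi}_n^{(n)}(z_1,z_2)$, using the representation \eqref{sigma-2} of $\Sigma_n(z_1,z_2)$ via the determinant $q_n^{(n)}(x_1^{(n)})q_{n-1}^{(n)}(x_2^{(n)})-q_n^{(n)}(x_2^{(n)})q_{n-1}^{(n)}(x_1^{(n)})$ divided by $[d_n^{(n)}]^2$ and the normalizing ratio $\frac{h_{n-1-m}^{(s)}k_n^{(s)}}{k_{n-1-m}^{(s)}}$. Thus the computation reduces to three asymptotic inputs, all already available: Corollary \ref{corollary} (with $\kappa_n=n$, so $\kappa=1$) gives the limit of the numerator determinant after multiplication by $n^{m(m+1)-2(m+1)s+1}$; Proposition \ref{AC-matrix} gives $\lim_n n^{\frac{m(m-1)}{2}-ms}d_n^{(n)}=2^{ms}(v_1\cdots v_m)^{-s/2}C_{II}^{(s,v)}(1)$, hence the limit of $[d_n^{(n)}]^2$ after multiplication by $n^{m(m-1)-2ms}$; and the Lemma on ratios of leading coefficients together with \eqref{leading-norm} gives the asymptotics of $\frac{h_{n-1-m}^{(s)}k_n^{(s)}}{k_{n-1-m}^{(s)}}$, which behaves like a constant times $2^{2m}$ divided by $n$ (more precisely $h_{n-1-m}^{(s)}\sim 2^{s+1}/(2n)$ and $k_n^{(s)}/k_{n-1-m}^{(s)}\to 2^{1+m}$).

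The steps, in order, would be: first record the exact power of $n$ appearing in $(2n^2)^{m-s}$ versus those coming from the three factors above, and check that they cancel, leaving a finite nonzero limit — this bookkeeping is routine but must be done carefully. Second, substitute the limits from Corollary \ref{corollary} and Proposition \ref{AC-matrix} into \eqref{sigma-2}; the common prefactors $2^{2(m+1)s}(v_1\cdots v_m)^{-s}z_1^{-s/2}z_2^{-s/2}$ from the numerator and the square $\big(2^{ms}(v_1\cdots v_m)^{-s/2}\big)^2=2^{2ms}(v_1\cdots v_m)^{-s}$ from $[d_n^{(n)}]^2$ partially cancel, leaving exactly $2^{2s}z_1^{-s/2}z_2^{-s/2}$ times $\big[C_{II}^{(s,v)}(1)\big]^{-2}$; together with the powers of $2$ from the leading-coefficient ratio and from $(2n^2)^{m-s}$ the numerical constant should collapse to $1/2$. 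Third, combine with the factor $(z_1z_2)^{s/2}$ and $\prod_i\sqrt{(v_i+z_1)(v_i+z_2)}$ out front in \eqref{CD-2}: the factors $z_1^{-s/2}z_2^{-s/2}$ and $(z_1z_2)^{s/2}$ cancel, yielding precisely the claimed formula, with the sign of $z_1-z_2$ in the denominator matching since $\Sigma_n$ has $z_2-z_1$ in \eqref{sigma-2} and the numerator determinant in Corollary \ref{corollary} carries the column $-B_{II}$ (one expands the $2\times 2$ determinant $\left|\begin{smallmatrix}A_{II}(1,z_1)&-B_{II}(1,z_1)\\A_{II}(1,z_2)&-B_{II}(1,z_2)\end{smallmatrix}\right| = -\big(A_{II}(1,z_1)B_{II}(1,z_2)-A_{II}(1,z_2)B_{II}(1,z_1)\big)$, which flips $z_2-z_1$ into $z_1-z_2$).

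The main obstacle I expect is not conceptual but the exact tracking of the powers of $2$ and of $n$ through the chain \eqref{CD-2}$\to$\eqref{sigma-2}$\to$Corollary \ref{corollary}$+$Proposition \ref{AC-matrix}$+$leading-coefficient asymptotics, since a single misplaced factor of $2$ would destroy the clean constant $1/2$; a secondary point requiring a word of justification is that the convergences in Propositions \ref{AC-matrix}, \ref{B-matrix} and hence in Corollary \ref{corollary} hold at the specific value $\kappa=1$ (the boundary of the allowed range $[\varepsilon,1]$), which is exactly the case $\kappa_n=n$ treated there, so no extra uniformity argument is needed. Finally one should note that the limit is stated pointwise in $z_1,z_2>0$; this is all that is claimed, and it follows directly from the pointwise limits of the three constituent quantities.
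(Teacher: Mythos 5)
Your proposal is correct and follows essentially the same route as the paper: the paper likewise passes to the limit in \eqref{sigma-2} using Proposition \ref{AC-matrix}, Corollary \ref{corollary} and the leading-coefficient asymptotics to obtain $\lim_n\Sigma_n(z_1,z_2)$, and then substitutes into \eqref{CD-2}. Your bookkeeping of the powers of $n$ and of $2$ (yielding the net factor $2^{-1}$) and the sign flip from $z_2-z_1$ to $z_1-z_2$ via the $-B_{II}$ column all check out.
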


\begin{proof}
By  \eqref{sigma-2} and Proposition \ref{AC-matrix}, Corollary \ref{corollary},  we have  \begin{align*} & \lim_{n \to \infty} \Sigma_n( z_1, z_2) \\ = &  \frac{(z_1z_2)^{-\frac{s}{2}}  \Big\{A_{II}^{(s,v)}(1, z_1) B_{II}^{(s,v)}(1, z_2) -  A_{II}^{(s,v)}(1, z_2) B_{II}^{(s,v)}(1, z_1)\Big\} }{2 \big[C_{II}^{(s,v)} (1)\big]^2 ( z_1 - z_2) }.\end{align*} Combining this with \eqref{CD-2}, we get the desired result.
\end{proof}

 \begin{prop}
 Let $s > m - 1, s \notin \N$. The kernel $\Pi_\infty^{(s, v)}(z_1, z_2)  $ has the following integral form: \begin{align*} &  \Pi_\infty^{(s, v)}(z_1, z_2) \\ = & \frac{1}{2 \prod_{i = 1}^m \sqrt{(v_i + z_1) (v_i + z_2)}}     \int_{0}^1  \frac{ A_{II}^{(s,v)}(\kappa, z_1)  \cdot   A_{II}^{(s,v)}(\kappa, z_2) }{\big[ C_{II}^{(s,v)}(\kappa)\big]^2} \kappa d \kappa.\end{align*}
\end{prop}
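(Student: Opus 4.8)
The plan is to imitate the proof of Theorem~\ref{integral-form-1}, working from the summation representation \eqref{sigma-1} of $\Sigma_n(z_1,z_2)$ rather than from the telescoped one \eqref{sigma-2}. Fix $z_1,z_2>0$; for $\varepsilon>0$ split $\Sigma_n(z_1,z_2)=I_n(\varepsilon)+II_n(\varepsilon)$ by cutting the sum in \eqref{sigma-1} at $j=\lfloor n\varepsilon\rfloor$. The tail $II_n(\varepsilon)$ is rewritten as a Riemann integral $\int_{[\lfloor n\varepsilon\rfloor/n,\,1)}T_n(t)\,dt$, where $T_n(t)$ is $n$ times the $\lfloor nt\rfloor$-th summand of \eqref{sigma-1} (with $x_i^{(n)}=1-z_i/2n^2$); by Proposition~\ref{AC-matrix}, together with \eqref{leading-norm} and the ratio lemma for $k^{(s)}_{\kappa_n+p}/k^{(s)}_{\kappa_n}$, one has $T_n(t)\to\frac{(z_1z_2)^{-s/2}}{2[C_{II}^{(s,v)}(t)]^2}A_{II}^{(s,v)}(t,z_1)A_{II}^{(s,v)}(t,z_2)\,t$ uniformly on $t\in[\varepsilon,1]$, hence $II_n(\varepsilon)\to II_\infty(\varepsilon):=\int_\varepsilon^1\frac{(z_1z_2)^{-s/2}}{2[C_{II}^{(s,v)}(t)]^2}A_{II}^{(s,v)}(t,z_1)A_{II}^{(s,v)}(t,z_2)\,t\,dt$. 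The head $I_n(\varepsilon)$ is rewritten by the Christoffel--Darboux formula, exactly as in \eqref{sigma-2} but with $n$ replaced by $\lfloor n\varepsilon\rfloor$; for large $n$ this involves only $q^{(n)}_{\lfloor n\varepsilon\rfloor}$ and $q^{(n)}_{\lfloor n\varepsilon\rfloor-1}$ (indices $>m$, so the Christoffel--Uvarov formulae apply), so by Propositions~\ref{AC-matrix} and \ref{B-matrix} and Corollary~\ref{corollary} one gets $I_n(\varepsilon)\to I_\infty(\varepsilon):=\frac{(z_1z_2)^{-s/2}}{2}\cdot\frac{A_{II}^{(s,v)}(\varepsilon,z_1)B_{II}^{(s,v)}(\varepsilon,z_2)-A_{II}^{(s,v)}(\varepsilon,z_2)B_{II}^{(s,v)}(\varepsilon,z_1)}{[C_{II}^{(s,v)}(\varepsilon)]^2\,(z_1-z_2)}\cdot\varepsilon$. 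Thus $\lim_n\Sigma_n(z_1,z_2)=I_\infty(\varepsilon)+II_\infty(\varepsilon)$ for every $\varepsilon>0$, and in view of \eqref{CD-2} and Theorem~\ref{thm-case2-1} the proposition reduces to proving $I_\infty(\varepsilon)\to0$ as $\varepsilon\to0+$ (which also forces $II_\infty(\varepsilon)$ to converge, i.e.\ the asserted integral to be absolutely convergent at the origin).

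The heart of the matter is therefore the analogue of Lemma~\ref{lem-case1}, namely
\[
\lim_{\varepsilon\to0+}\frac{A_{II}^{(s,v)}(\varepsilon,z_1)B_{II}^{(s,v)}(\varepsilon,z_2)-A_{II}^{(s,v)}(\varepsilon,z_2)B_{II}^{(s,v)}(\varepsilon,z_1)}{[C_{II}^{(s,v)}(\varepsilon)]^2}\cdot\varepsilon=0 .
\]
I would prove it by the device of Lemma~\ref{lem-case1}. Since $s\notin\N$, for each $v>0$ one has the two-branch expansion $K_s(\varepsilon\sqrt v)=\varepsilon^{-s}\Phi_v(\varepsilon^2)+\varepsilon^{s}\Psi_v(\varepsilon^2)$ with $\Phi_v(x)=\Phi(vx)$, $\Psi_v(x)=\Psi(vx)$ for universal entire $\Phi,\Psi$ whose $k$-th Taylor coefficients are nonzero multiples of $1/\Gamma(k+1-s)$, $1/\Gamma(k+1+s)$; in particular $\Phi^{(k)}(0)\ne0$ for all $k$ exactly because $s\notin\N$. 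Likewise $J_s(\varepsilon\sqrt z)=\varepsilon^{-s}\cdot(\varepsilon^2)^sR_z(\varepsilon^2)$ with $R_z$ analytic, $R_z(0)\ne0$. As $s>m-1\ge0$, the $\varepsilon^{-s}$-branch of each $K_s$ dominates; factoring $\varepsilon^{-s}$ out of every column ($\varepsilon^{-ms}$ from $C_{II}$, $\varepsilon^{-(m+1)s}$ from $A_{II}$) and using the identity $\frac{d^n}{dx^n}[f(x^2)]=n!\sum_k\frac{(2x)^{n-2k}}{k!(n-2k)!}f^{(n-k)}(x^2)$ to pull the argument $\varepsilon^2$ outside the Wronskians, one reduces to Wronskians of $\Phi_1,\dots,\Phi_m$ (and, for $A_{II}$, the extra function $u\mapsto u^{s}R_z(u)$) evaluated at $\varepsilon^2$; a Vandermonde argument as in Lemma~\ref{sublem}, using that the $v_i$ are distinct, that $\Phi^{(k)}(0)\ne0$, and that $s(s-1)\cdots(s-m+1)\ne0$, shows the relevant leading coefficients are nonzero. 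This gives $C_{II}^{(s,v)}(\varepsilon)\asymp\varepsilon^{c}$, $A_{II}^{(s,v)}(\varepsilon,z_i)\asymp\varepsilon^{a}$ and $B_{II}^{(s,v)}(\varepsilon,z_i)=\partial_\varepsilon A_{II}^{(s,v)}(\varepsilon,z_i)\asymp\varepsilon^{a-1}$ with $c=-ms+\tfrac{m(m-1)}2$ and $a=-(m-1)s+\tfrac{m(m-3)}2$, so $a-c=s-m$.

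To conclude one uses a cancellation: $A_{II}^{(s,v)}(\varepsilon,z_i)=\alpha(z_i)\varepsilon^{a}+\beta(z_i)\varepsilon^{a+2}+\cdots$ with $z_i$-dependent coefficients (the $\varepsilon^{2s-1}$-type contributions of the subdominant $K_s$-branch turn out to cancel), and $B_{II}^{(s,v)}=\partial_\varepsilon A_{II}^{(s,v)}$, so the leading $\varepsilon^{2a-1}$ term of $A_{II}(z_1)B_{II}(z_2)-A_{II}(z_2)B_{II}(z_1)$ vanishes identically and the survivor has order $\varepsilon^{2a+1}$, with coefficient a nonzero multiple of $\alpha(z_1)\beta(z_2)-\alpha(z_2)\beta(z_1)$, which is $\ne0$ for $z_1\ne z_2$. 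Hence the quotient in the displayed limit is $\asymp\varepsilon^{2a+1-2c+1}=\varepsilon^{2(s-m+1)}\to0$ since $s>m-1$; the same bookkeeping shows that near $\kappa=0$ the integrand $\frac{A_{II}(\kappa,z_1)A_{II}(\kappa,z_2)}{[C_{II}(\kappa)]^2}\kappa$ is $O(\kappa^{2(s-m)+1})$, hence integrable. Letting $\varepsilon\to0$ in $\lim_n\Sigma_n=I_\infty(\varepsilon)+II_\infty(\varepsilon)$ and combining with \eqref{CD-2} yields the integral formula.

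The step I expect to be the real obstacle is the second paragraph: unlike Case~I, where every function entering the Wronskians is of the single type $\varepsilon^s\times(\text{entire in }\varepsilon^2)$, here the modified Bessel functions carry two branches $\varepsilon^{\pm s}$ while $J_s$ carries only the subdominant one, so the naive leading-order matrix of Wronskian coefficients is degenerate and several layers of cancellation — which the substitution identity together with the Vandermonde non-vanishing of Lemma~\ref{sublem} package neatly — must be tracked before the exponents $c$ and $a$ (and the cancellation of the $\varepsilon^{2s-1}$-terms) can be justified; it is precisely there that one sees the hypothesis $s>m-1$ is exactly what is needed, with $s\notin\N$ used to make the two-branch expansion of $K_s$ available at all (for integer $s$ a logarithm intervenes, and both the head term and the limiting integral would have to be treated separately).
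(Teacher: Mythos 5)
Your argument is correct and follows the paper's overall strategy: the paper's proof of this proposition is literally ``similar to Theorem \ref{integral-form-1}, but with Lemma \ref{lem-case2} in place of Lemma \ref{lem-case1}'', and your first paragraph is exactly that reduction. The genuine divergence is in how you prove the boundary lemma (your displayed limit, which is Lemma \ref{lem-case2}). The paper estimates $A_{II}^{(s,v)}$, $B_{II}^{(s,v)}$, $C_{II}^{(s,v)}$ separately and multiplies absolute values, quoting $A_{II}^{(s,v)}(\varepsilon,z_i)\asymp\varepsilon^{-(1+m)s+m(m+1)/2+2[s-(m-1)]}$ and $B_{II}^{(s,v)}(\varepsilon,z_i)\asymp\varepsilon^{-(1+m)s+m(m+1)/2+2(s-m)}$, which yields the exponent $2s-2m+3>0$ with no cancellation needed. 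Your exponents $a=-(m-1)s+m(m-3)/2$ for $A_{II}$ and $a-1$ for $B_{II}=\partial_\varepsilon A_{II}$ are smaller by $2$ and $1$ respectively; a direct check at $m=1$, $s>0$ gives $A_{II}^{(s,v)}(\varepsilon,z)\sim (z/v)^{s/2}\,\varepsilon^{-1}$, i.e.\ supports your values, and with them the triangle-inequality bound only gives $\varepsilon^{2(s-m)}$, which is insufficient on $m-1<s<m$. So your exploitation of the antisymmetry of $A(z_1)B(z_2)-A(z_2)B(z_1)$ together with $B=\partial_\varepsilon A$ to gain an extra $\varepsilon^2$ (arriving at $\varepsilon^{2(s-m+1)}\to0$) is not an optional refinement but the step that actually closes the argument on the full range $s>m-1$; it is a more careful route than the one the paper sketches. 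Two small points: (i) the parenthetical ``the $\varepsilon^{2s-1}$-type contributions cancel'' is not quite right --- the subdominant branch of $K_s$ contributes to the inner Wronskian at relative order $\varepsilon^{2(s-m)+4}$, which exceeds $\varepsilon^{2}$ exactly because $s>m-1$, so those terms are negligible rather than cancelling; this is what validates the expansion $A_{II}=\alpha(z)\varepsilon^{a}+\beta(z)\varepsilon^{a+2}+o(\varepsilon^{a+2})$ that your cancellation requires. (ii) The nonvanishing of $\alpha(z_1)\beta(z_2)-\alpha(z_2)\beta(z_1)$ is not needed for the limit to vanish (only the upper bound is), so you need not prove it. Your added observation that the same exponents give integrability of the limiting integrand at $\kappa=0$ is a worthwhile point the paper leaves implicit.
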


\begin{proof}
The proof is similar to that of Theorem \ref{integral-form-1}, a slight difference is, instead of using Lemma \ref{lem-case1}, we shall use the following Lemma \ref{lem-case2}.
\end{proof}

\begin{lem}\label{lem-case2}
Let $s > m - 1, s \notin \N$.  For any $z_1, z_2 > 0$, we have $$\lim_{ \varepsilon \to 0^{+}}   \frac{ A_{II}^{(s,v)}(\varepsilon, z_1) B_{II}^{(s,v)}(\varepsilon, z_2) -  A_{II}^{(s,v)}(\varepsilon, z_2) B_{II}^{(s,v)}(\varepsilon, z_1)}{  \big[  C_{II}^{(s,v)}(\varepsilon)\big]^2 } \cdot  \varepsilon = 0.$$
\end{lem}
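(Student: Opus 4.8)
The plan is to imitate the proof of Lemma~\ref{lem-case1}: determine the exact order of vanishing or blow-up, as $\varepsilon\to 0^{+}$, of each of $C_{II}^{(s,v)}(\varepsilon)$, $A_{II}^{(s,v)}(\varepsilon,z)$ and $B_{II}^{(s,v)}(\varepsilon,z)$, and then combine the exponents. Note first that if $z_1=z_2$ the numerator vanishes identically and there is nothing to prove, so one may assume $z_1\neq z_2$; all implied constants below are allowed to depend on $z_1,z_2$.

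First I would record the small-argument expansions. Since $s>m-1\ge 0$ forces $s>0$ and $s\notin\N$, $s$ is not an integer, so $K_s=\frac{\pi}{2\sin\pi s}\left(I_{-s}-I_s\right)$ with $I_{\pm s}$ the modified Bessel functions of the first kind, and hence
$$K_{s,v_i}(\varepsilon)=\sum_{k\ge0}a_k^{(i)}\varepsilon^{-s+2k}+\sum_{k\ge0}b_k^{(i)}\varepsilon^{s+2k},\qquad a_k^{(i)}\neq 0,$$
while $J_{s,z}(\varepsilon)=\sum_{k\ge0}d_k(z)\varepsilon^{s+2k}$ with $d_0(z)\neq 0$. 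Using multilinearity of the Wronskian in its rows together with $W(\varepsilon^{\alpha_1},\dots,\varepsilon^{\alpha_p})(\varepsilon)=\bigl(\prod_{i<i'}(\alpha_{i'}-\alpha_i)\bigr)\varepsilon^{\alpha_1+\cdots+\alpha_p-\binom p2}$, I would expand $C_{II}^{(s,v)}$ and $A_{II}^{(s,v)}$ into sums over ``monomial tuples'' and isolate the lowest-order term. Here the hypothesis $s>m-1$ is decisive: it forces the $m$ smallest exponents available to the $K$-rows, namely $-s,-s+2,\dots,-s+2(m-1)$, all to lie strictly below $s$, the smallest exponent occurring both in the $I_s$-part of $K_s$ and in $J_s$. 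Consequently the minimizing tuple is unique, and its coefficient — exactly as in Lemma~\ref{sublem} — is a nonzero multiple of the Vandermonde determinant $\prod_{i<i'}(v_{i'}-v_i)$, the nonvanishing of the accompanying $\Gamma$-factors again using $s\notin\N$. Setting $c_A:=-ms+\frac{m(m-1)}{2}$ and $a_A:=-ms+s+m(m-1)-\frac{m(m+1)}{2}$, this yields
$$C_{II}^{(s,v)}(\varepsilon)\asymp\varepsilon^{c_A},\qquad A_{II}^{(s,v)}(\varepsilon,z)=c(z)\,\varepsilon^{a_A}+O(\varepsilon^{a_A+2})\quad(\varepsilon\to0^{+}),$$
with $c(z)$ a nonzero multiple of $d_0(z)$; the remainder $O(\varepsilon^{a_A+2})$ uses $s>m-1$ once more, since a ``cross-family'' tuple (assigning an exponent from $\{s+2k\}$ to a $K$-row) enters only at order $\varepsilon^{a_A+2(s-m+2)}$ and $2(s-m+2)>2$ precisely when $s>m-1$. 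Differentiating in $\varepsilon$ gives $B_{II}^{(s,v)}(\varepsilon,z)=a_A\,c(z)\,\varepsilon^{a_A-1}+O(\varepsilon^{a_A+1})$.

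The delicate point — and the main obstacle — is that the naive estimate $|A_{II}(\varepsilon,z_1)B_{II}(\varepsilon,z_2)|\lesssim\varepsilon^{2a_A-1}$ is too weak: it would prove the lemma only for $s>m$, not for $s>m-1$. One must instead exploit the cancellation of leading terms in the antisymmetric combination. Writing $A_{II}(\varepsilon,z_j)=c_j\varepsilon^{a_A}+R_j(\varepsilon)$ with $c_j=c(z_j)$, $R_j=O(\varepsilon^{a_A+2})$ and $R_j'=O(\varepsilon^{a_A+1})$, a direct computation gives
$$A_{II}(\varepsilon,z_1)B_{II}(\varepsilon,z_2)-A_{II}(\varepsilon,z_2)B_{II}(\varepsilon,z_1)=c_1\varepsilon^{a_A}R_2'-c_2\varepsilon^{a_A}R_1'+a_A\varepsilon^{a_A-1}\bigl(c_2R_1-c_1R_2\bigr)+\bigl(R_1R_2'-R_2R_1'\bigr),$$
the $a_A c_1c_2\varepsilon^{2a_A-1}$ contributions of the two products having cancelled; every term on the right is $O(\varepsilon^{2a_A+1})$. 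Combined with $C_{II}^{(s,v)}(\varepsilon)\asymp\varepsilon^{c_A}$ this yields
$$\left|\frac{A_{II}^{(s,v)}(\varepsilon,z_1)B_{II}^{(s,v)}(\varepsilon,z_2)-A_{II}^{(s,v)}(\varepsilon,z_2)B_{II}^{(s,v)}(\varepsilon,z_1)}{\bigl[C_{II}^{(s,v)}(\varepsilon)\bigr]^2}\cdot\varepsilon\right|\lesssim\varepsilon^{\,2a_A+1-2c_A+1}=\varepsilon^{\,2(s-m+1)},$$
using the bookkeeping identity $2a_A-2c_A=2(s-m)$ (the same one that appears in Lemma~\ref{lem-case1}). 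Since $s>m-1$ the exponent $2(s-m+1)$ is positive, so the left-hand side tends to $0$ as $\varepsilon\to0^{+}$, which is the assertion of the lemma.
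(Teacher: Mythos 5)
Your proof is correct, and it is in fact more careful than the paper's own argument, which follows the same overall power-counting strategy (small-argument expansions of $K_s$ and $J_s$, expansion of the Wronskians into monomial contributions, nonvanishing of the leading coefficient via a generalized Vandermonde determinant exactly as in Lemma \ref{sublem}) but then simply multiplies the individual bounds for $A_{II}$, $B_{II}$, $C_{II}$. The substantive difference is your cancellation step, and it is not optional. A direct check at $m=1$ confirms your exponents: there $A_{II}^{(s,v)}(\varepsilon,z)=K_{s,v}J_{s,z}'-K_{s,v}'J_{s,z}\sim 2s\,c_-d_0(z)\,\varepsilon^{-1}$ with $c_-d_0(z)\neq0$, so $A_{II}\asymp\varepsilon^{-1}$ and $B_{II}\asymp\varepsilon^{-2}$, whereas the paper's proof asserts $A_{II}\asymp\varepsilon^{-(1+m)s+\frac{m(m+1)}{2}+2[s-(m-1)]}$, which equals $\varepsilon^{+1}$ for $m=1$, and $B_{II}\asymp\varepsilon^{-1}$; the printed exponents are too large by $2$ and $1$ respectively. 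With the corrected individual asymptotics, the termwise bound on the numerator gives only $\varepsilon^{2(s-m)}$ after dividing by $[C_{II}^{(s,v)}]^2$ and multiplying by $\varepsilon$, which suffices only for $s>m$. Your observation that the leading contributions $a_A c_1c_2\varepsilon^{2a_A-1}$ cancel in the antisymmetric combination --- legitimate here because the remainders $R_j$ are generalized power series all of whose exponents are at least $a_A+2$ (this is where $s>m-1$ enters twice: once to keep the $m$ exponents $-s,-s+2,\dots,-s+2(m-1)$ strictly below $s$, and once to push the cross-family correction to order $2(s-m+2)>2$), so they can be differentiated termwise --- gains the extra factor $\varepsilon^{2}$ and yields the bound $\varepsilon^{2(s-m+1)}$, which tends to $0$ precisely under the hypothesis $s>m-1$. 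In short: same machinery as the paper and as Lemma \ref{lem-case1}, but your cancellation argument is what actually closes the range $m-1<s\le m$ and repairs the exponent bookkeeping in the printed proof.
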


 \begin{proof}
 Recall that $C_{II}^{(s, v)} (\varepsilon) = W\Big(K_{s, v_1}, \cdots, K_{s, v_m}(\varepsilon) \Big)(\varepsilon). $  By McDonald definition of $K_s(z)$, namely $$K_s(z) = \frac{\pi}{2} \frac{I_{-s} (z) - I_s(z)}{\sin s \pi}, \quad I_s(z) = \sum_{\nu = 0}^\infty \frac{(\frac{1}{2} z)^{2\nu + s }}{\nu ! \Gamma(\nu + s + 1)}, $$ we can write $$K_{s, v_i} (\varepsilon) = K_s(\varepsilon \sqrt{v_i} ) = \frac{\pi}{2 \sin(  s \pi)}\varepsilon^{- s}  \Big( \underbrace{  \sum_{ \nu = 0}^\infty \frac{\alpha^{(i) }_\nu}{\nu! } \varepsilon^{2\nu} - \varepsilon^{2  s}\sum_{\nu = 0}^\infty  \frac{\beta_{\nu}^{(i)}}{\nu ! } \varepsilon^{2 \nu }}_{: = \mathscr{A}_i (\varepsilon^2) } \Big),    $$ where $$ \alpha_\nu^{(i)}  = \frac{ (\frac{1}{2} \sqrt{v_i})^{2\nu - s}}{\Gamma(\nu - s +1)}, \quad \beta_\nu^{(i)}  = \frac{ (\frac{1}{2} \sqrt{v_i})^{2\nu + s}}{\Gamma(\nu +  s +1)}. $$ Thus \begin{align*} C_{II}^{(s,v)}(\varepsilon) & = \text{Const}_s  \times \varepsilon^{- m s} W\Big(\mathscr{A}_1(x^2), \cdots, \mathscr{A}_m(x^2)\Big) (\varepsilon) \\  & = \text{Const}_s \times \varepsilon^{-ms + \frac{m(m-1)}{2} } W\Big(\mathscr{A}_1, \cdots, \mathscr{A}_m \Big) (\varepsilon^2).   \end{align*} By the assumption $s > m -1$, we know that  $\mathscr{A}_i$ are all differentiable up to order at least $m-1$ on the neighbourhood of $0$, hence $ W(\mathscr{A}_1, \cdots, \mathscr{A}_m ) (\varepsilon)$ is continuous, and $$ W\Big(\mathscr{A}_1, \cdots, \mathscr{A}_m \Big) (0)  = \text{non-zero term} \times   \left| \begin{array}{cccc} 1 & v_1 & \cdots & v_1^{m-1} \\ \vdots & \vdots & & \vdots \\  1 & v_m & \cdots & v_m^{m-1} \end{array} \right| \ne 0 . $$ Hence $$ C_{II}^{(s, v)} (\varepsilon) \asymp \varepsilon^{-ms + \frac{m (m-1)}{2} }, \text{ as } \varepsilon \to 0. $$ In a similar way, we can show that $$A_{II}^{(s,v)} (\varepsilon, z_i) \asymp \varepsilon^{- (1+m) s + \frac{m(m+1)}{2} +  2[s - (m-1)]}   , \text{ as } \varepsilon \to 0.  $$ $$B_{II}^{(s,v)} (\varepsilon, z_i) \asymp \varepsilon^{- (1+m) s + \frac{m(m+1)}{2} +  2(s - m)}  , \text{ as } \varepsilon \to 0.$$ Hence $$\left|    \frac{ A_{II}^{(s,v)}(\varepsilon, z_1) B_{II}^{(s,v)}(\varepsilon, z_2) -  A_{II}^{(s,v)}(\varepsilon, z_2) B_{II}^{(s,v)}(\varepsilon, z_1)}{  \big[  C_{II}^{(s,v)}(\varepsilon)\big]^2 } \varepsilon \right| \lesssim  \varepsilon^{ 2s -2m +3}. $$ Since $2s - 2m + 3 > 0$, the lemma is completely proved.

 \end{proof}

{\flushleft \bf Remark.} Let us consider the case where $-1 < s < 0$ and $m = 1$. Let us denote \begin{align} \label{I-formula} \begin{split} & \mathscr{I}^{(s,v)} (\kappa, z_1, z_2) \\ & =       \frac{  \left| \begin{array}{cc} K_s(\kappa \sqrt{v}) & \sqrt{v} K_s'(\kappa \sqrt{v}) \\ J_s(\kappa \sqrt{z_1}) & \sqrt{z_1} J_s'(\kappa \sqrt{z_1}) \end{array}\right|   \left| \begin{array}{cc} K_s(\kappa \sqrt{v}) & \sqrt{v} K_s'(\kappa \sqrt{v}) \\ J_s(\kappa \sqrt{z_2}) & \sqrt{z_2} J_s'(\kappa \sqrt{z_2}) \end{array}\right| }{   2  \sqrt{(v + z_1) (v + z_2)} \cdot  \big[ K_s(\kappa \sqrt{v})\big]^2} \cdot \kappa . \end{split} \end{align}  For any $\varepsilon$, we divide the following sum into two parts: \begin{align*}  & \Pi_n^{(n)} (z_1, z_2)  = \frac{1}{2n^2}  \sum_{j = 0}^{n-1} \frac{ q_j(x_1^{(n)}) q_j(x_2^{(n)})}{h_j^{(s, r^{(n)} ) }} \sqrt{w_s^{(r^{(n)})} (x_1^{(n)}) w_s^{(r^{(n)})} (x_2^{(n)})}   \\  & = \underbrace{ \frac{1}{2n^2} \sum_{j = 0}^{\lfloor n \varepsilon \rfloor - 1} \cdots}_{: = \mathscr{S}_n^{(1)} (\varepsilon, z_1, z_2) }  + \underbrace{\frac{1}{2n^2} \sum_{j = \lfloor n \varepsilon \rfloor }^{n-1} \cdots}_{: = \mathscr{S}_n^{(2)} (\varepsilon, z_1, z_2) } .\end{align*} From the previous propositions and Theorem \ref{thm-case2-1}, we know that the following limits all exist $$\lim_{n \to \infty}  \mathscr{S}_n^{(1)} (\varepsilon, z_1, z_2), \quad \lim_{n \to \infty}  \mathscr{S}_n^{(2)} (\varepsilon, z_1, z_2), \quad \lim_{n \to \infty} \Pi_n^{(n)} (z_1, z_2) . $$ By denoting $$ \mathscr{S}^{(1)}_\infty (\varepsilon, z_1, z_2) =  \lim_{n \to \infty}  \mathscr{S}_n^{(1)} (\varepsilon, z_1, z_2) ,   \mathscr{S}^{(2)}_\infty (\varepsilon, z_1, z_2) =  \lim_{n \to \infty}  \mathscr{S}_n^{(2)} (\varepsilon, z_1, z_2) ,$$  we have for any $\varepsilon > 0$, \begin{align}\label{decompose} \Pi_\infty^{(s, v)} (z_1, z_2) =  \mathscr{S}^{(1)}_\infty (\varepsilon, z_1, z_2) + \mathscr{S}^{(2)}_\infty (\varepsilon, z_1, z_2 )  . \end{align} If $z_1 = z_2$, then every term is positive, hence \begin{align*} & \Pi_\infty^{(s, v)} (z_1, z_1)\ge  \mathscr{S}^{(2)}_\infty (\varepsilon, z_1, z_1)  =  \int_\varepsilon^{1} \mathscr{I}^{(s, v)} (\kappa, z_1, z_1) d\kappa. \end{align*} By Cauchy-Schwarz inequality, we can show that $$ | \mathscr{I}^{(s, v)} (\kappa, z_1, z_2)|^2 \le \mathscr{I}^{(s, v)} (\kappa, z_1, z_1) \cdot \mathscr{I}^{(s, v)} (\kappa, z_2, z_2).  $$ Again by Cauchy-Schwarz inequality, we see that $\kappa \to \mathscr{I}^{(s, v)} (\kappa, z_1, z_2)$ is integrable on $(0, 1)$. Combining this fact with \eqref{decompose}, we see that the  limit $ \lim_{\varepsilon \to 0+} \mathscr{S}^{(1)}_\infty (\varepsilon, z_1, z_2) $ always exists. Let us denote  this limit by $ \mathscr{S}_\infty^{(1)} (0, z_1, z_2)$.

Now we show that $\mathscr{S}_\infty^{(1)} (0, z_1, z_2) $ is not identically zero. Let $z_1 = z_2$, then for any $\varepsilon > 0$, we have \begin{align*} &  \mathscr{S}^{(1)}_n (\varepsilon, z_1, z_1) \ge  \frac{1}{2n^2} \frac{\big\{ q_0(x_1^{(n)}) \big\} ^2}{h_0^{(s, r^{(n)} ) }} w_s^{(r^{(n)})} (x_1^{(n)})  \\   = &    \frac{1}{2n^2} \frac{1}{\int_{-1}^{1} w_s^{(r^{(n)})} (t) dt } \frac{ ( 1 - x_1^{(n)} )^s }{ 1 + r^{(n)} - x_1^{(n)}}  \\  = &   \frac{ z_1^s}{v + z_1 }  \frac{1}{ (2n^2)^s \int_{-1}^1 w_s^{(r^{(n)}) } (x) dx }. \end{align*}  We have \begin{align*}  & (2n^2)^s \int_{-1}^1 w_s^{(r^{(n)}) } (x) dx  = \int_0^{4n^2}  \frac{t^s}{  v + t} d t  \\ & \xrightarrow{n \to \infty} \int_0^\infty   \frac{t^s}{ v + t} d t  = v^s \Gamma(-s) \Gamma(s + 1).  \end{align*}   Hence $$ \mathscr{S}_\infty^{(1)}(0, z_1, z_1) \ge  \frac{1}{ v^s \Gamma(-s) \Gamma(s +1) }\cdot \frac{z_1^s}{ v + z_1} \ne 0.  $$

\begin{defn} For $-1< s < 0$,  define a positive function on $\R^*_{+}$:  $$\mathscr{N}^{(s, v)} (z) : = \frac{1}{ (v^s \Gamma(-s) \Gamma(s +1) )^{1/2}}\sqrt{\frac{z^s}{ v + z}}. $$
\end{defn}

\begin{prop}\label{one-rank-per}
For $m = 1$ and $ -1 < s < 0$, we have $$\Pi_\infty^{(s, v)} (z_1, z_2)  = \mathscr{N}^{(s,v)}(z_1) \mathscr{N}^{(s,v)}(z_2) + \int_0^1 \mathscr{I}^{(s, v)} (\kappa, z_1, z_2)   d \kappa .$$\end{prop}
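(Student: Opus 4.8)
The plan is to combine the decomposition recorded in the Remark preceding the statement with a single explicit limit computation. Write $\mathscr{N}=\mathscr{N}^{(s,v)}$. That Remark gives, for $-1<s<0$ and $m=1$,
\[\Pi_\infty^{(s, v)} (z_1, z_2)=\mathscr{S}^{(1)}_\infty (0, z_1, z_2)+\int_0^1 \mathscr{I}^{(s, v)} (\kappa, z_1, z_2)\, d\kappa ,\]
where $\mathscr{S}^{(1)}_\infty (0,\cdot)=\lim_{\varepsilon\to0^+}\lim_{n\to\infty}\mathscr{S}^{(1)}_n(\varepsilon,\cdot)$ is already known to exist. So the Proposition is equivalent to the identity $\mathscr{S}^{(1)}_\infty (0, z_1, z_2)=\mathscr{N}(z_1)\,\mathscr{N}(z_2)$, and this is what I would establish.

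First I would record two order relations for the positive semidefinite kernel $\mathscr{S}^{(1)}_\infty(0,\cdot)$. With $\psi_j^{(n)}(z)=(2n^2)^{-1/2}\bigl(h_j^{(s,r^{(n)})}\bigr)^{-1/2}q_j^{(n)}\bigl(1-\tfrac{z}{2n^2}\bigr)\sqrt{w_s^{(r^{(n)})}\bigl(1-\tfrac{z}{2n^2}\bigr)}$ one has $\mathscr{S}^{(1)}_n(\varepsilon,\cdot)=\sum_{j=0}^{\lfloor n\varepsilon\rfloor-1}\psi_j^{(n)}\otimes\psi_j^{(n)}\ge\psi_0^{(n)}\otimes\psi_0^{(n)}$ as positive semidefinite kernels, and, by the computation already made in the Remark (using $q_0^{(n)}\equiv1$ and $(2n^2)^s\int_{-1}^1 w_s^{(r^{(n)})}\to v^s\Gamma(-s)\Gamma(s+1)$), $\psi_0^{(n)}(z)\to\mathscr{N}(z)$; letting $n\to\infty$ and then $\varepsilon\to0^+$ shows that $\mathscr{S}^{(1)}_\infty(0,\cdot)$ is positive semidefinite and $\mathscr{S}^{(1)}_\infty(0,\cdot)\ge\mathscr{N}\otimes\mathscr{N}$. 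On the other hand $\int_0^1\mathscr{I}^{(s,v)}(\kappa,\cdot)\,d\kappa$ is positive semidefinite, since $\mathscr{I}^{(s,v)}(\kappa,\cdot)=u_\kappa\otimes u_\kappa$ with $u_\kappa(z)=\bigl(\tfrac{\kappa}{2(v+z)}\bigr)^{1/2}A_{II}^{(s,v)}(\kappa,z)/C_{II}^{(s,v)}(\kappa)$, whence, as operators, $\mathscr{S}^{(1)}_\infty(0,\cdot)\le\Pi_\infty^{(s,v)}\le\mathrm{Id}$, the last inequality because $\Pi_\infty^{(s,v)}$ is the correlation kernel of the determinantal probability $\mathbb{P}^{(s)}_g$, hence an orthogonal projection (Proposition \ref{abstract-result} and the discussion preceding Theorem \ref{main-result}).

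The core of the argument is an explicit formula for $\mathscr{S}^{(1)}_\infty(\varepsilon,\cdot)$ and the evaluation of its limit as $\varepsilon\to0^+$. Applying the Christoffel--Darboux formula to the truncated sum $\sum_{j=0}^{\lfloor n\varepsilon\rfloor-1}$ and repeating the computation in the proof of Theorem \ref{thm-case2-1} with the index $\lfloor n\varepsilon\rfloor$ in place of $n$ (so that Propositions \ref{AC-matrix}, \ref{B-matrix} and Corollary \ref{corollary} are used with $\kappa=\varepsilon$, and $h^{(s)}_{\lfloor n\varepsilon\rfloor-1-m}\sim\varepsilon^{-1}h^{(s)}_{n-1-m}$) --- exactly as the term $I_n(\varepsilon)$ is handled in the proof of Theorem \ref{integral-form-1} --- one obtains
\[\mathscr{S}^{(1)}_\infty(\varepsilon,z_1,z_2)=\frac{\varepsilon\bigl(A_{II}^{(s,v)}(\varepsilon,z_1)B_{II}^{(s,v)}(\varepsilon,z_2)-A_{II}^{(s,v)}(\varepsilon,z_2)B_{II}^{(s,v)}(\varepsilon,z_1)\bigr)}{2\sqrt{(v+z_1)(v+z_2)}\,\bigl[C_{II}^{(s,v)}(\varepsilon)\bigr]^2\,(z_1-z_2)} .\]
For $m=1$, $C_{II}^{(s,v)}(\varepsilon)=K_s(\varepsilon\sqrt v)$ and $A_{II}^{(s,v)}(\varepsilon,z)=W(K_{s,v},J_{s,z})(\varepsilon)$. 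Inserting the small-argument expansions $K_s(x)=\tfrac{\Gamma(-s)}{2}(x/2)^s+\tfrac{\Gamma(s)}{2}(x/2)^{-s}+\tfrac{\Gamma(-s)}{2(s+1)}(x/2)^{s+2}+\cdots$ and $J_s(x)=\tfrac{(x/2)^s}{\Gamma(s+1)}-\tfrac{(x/2)^{s+2}}{\Gamma(s+2)}+\cdots$ and differentiating, one checks that the most singular term $\varepsilon^{2s-1}$ of $A_{II}^{(s,v)}(\varepsilon,z)$ cancels --- the analogue, for $s\le m-1$, of the cancellations in Lemmas \ref{lem-case1} and \ref{lem-case2} --- leaving $A_{II}^{(s,v)}(\varepsilon,z)=(z/v)^{s/2}\varepsilon^{-1}+d(z)\,\varepsilon^{2s+1}+\cdots$ with $d(z)=-\tfrac{\Gamma(-s)}{4^{s+1}\Gamma(s+2)}\,v^{s/2}z^{s/2}(v+z)$; the intermediate orders in the numerator above also telescope, and the single surviving order gives
\[\mathscr{S}^{(1)}_\infty(0,z_1,z_2)=\frac{1}{v^s\Gamma(-s)\Gamma(s+1)}\sqrt{\frac{z_1^s}{v+z_1}}\sqrt{\frac{z_2^s}{v+z_2}}=\mathscr{N}(z_1)\,\mathscr{N}(z_2),\]
which with the decomposition of the first paragraph proves the Proposition. (If one prefers not to track the constant, it suffices to read off that $\mathscr{S}^{(1)}_\infty(0,\cdot)$ is rank one with profile proportional to $z^{s/2}/\sqrt{v+z}$, i.e.\ $\mathscr{S}^{(1)}_\infty(0,\cdot)=c\,\mathscr{N}\otimes\mathscr{N}$; since $\|\mathscr{N}\|_{L^2(0,\infty)}=1$, the bounds $\mathscr{N}\otimes\mathscr{N}\le c\,\mathscr{N}\otimes\mathscr{N}\le\mathrm{Id}$ from the previous paragraph force $c=1$.)

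The main obstacle is the $\varepsilon\to0^+$ evaluation: because $s\le m-1=0$ the functions $K_s,J_s$ are singular at the origin and both the leading order of $A_{II}^{(s,v)}$ and the leading order of the bilinear combination in the numerator cancel --- this reflects the fact that for $s>0$ the same limit vanishes and $\Pi_\infty^{(s,v)}$ has the pure integral form --- so one must expand $A_{II}^{(s,v)},B_{II}^{(s,v)},C_{II}^{(s,v)}$ one or two orders past the leading one and watch the $s$-dependent ordering of the powers $\varepsilon^{s},\varepsilon^{-s},\varepsilon^{s+2}$ that occur. The remaining pieces --- the decomposition, the positivity, and the two order relations --- are routine given the Remark and the results already proved above.
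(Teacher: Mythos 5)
Your proposal is correct and follows the same skeleton as the paper's proof: the decomposition $\Pi_\infty^{(s,v)}=\mathscr{S}^{(1)}_\infty(\varepsilon,\cdot)+\mathscr{S}^{(2)}_\infty(\varepsilon,\cdot)$ from the Remark, the reduction to showing $\mathscr{S}^{(1)}_\infty(0,z_1,z_2)=\mathscr{N}^{(s,v)}(z_1)\mathscr{N}^{(s,v)}(z_2)$, and the Christoffel--Darboux formula \eqref{partial-sum} for the truncated sum, are exactly what the paper uses. The one place you genuinely diverge is the evaluation of the $\varepsilon\to0^+$ limit of the numerator in \eqref{partial-sum}: the paper factors it as $\bigl[K_s(\varepsilon\sqrt v)\bigr]^2\{\partial_\varepsilon(J_{s,z_1}/K_{s,v})\}^2\cdot\partial_\varepsilon\{\partial_\varepsilon(J_{s,z_2}/K_{s,v})/\partial_\varepsilon(J_{s,z_1}/K_{s,v})\}$ and computes the three factors' asymptotics separately (\eqref{iterate1}--\eqref{iterate3}), which makes the two cancellations (at orders $\varepsilon^{2s-1}$ in $A_{II}$ and $\varepsilon^{-3}$ in the antisymmetrized product) automatic; you instead expand $K_s$ and $J_s$ directly and track the cancellations by hand. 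I checked your expansion: the $\varepsilon^{-1}$ coefficient of $A_{II}$ is $(z/v)^{s/2}$, your $d(z)$ is correct, the surviving order of the numerator is $\varepsilon^{2s-1}$ with coefficient $\frac{2\Gamma(-s)}{4^{s+1}\Gamma(s+1)}v^{s/2}(z_1z_2)^{s/2}(z_1-z_2)$, and dividing by $[K_s(\varepsilon\sqrt v)]^2\sim\frac{\Gamma(-s)^2v^s}{4^{s+1}}\varepsilon^{2s}$ and multiplying by $\varepsilon$ reproduces the paper's limit $\frac{2(z_1-z_2)}{\Gamma(-s)\Gamma(s+1)}(\sqrt{z_1z_2}/v)^s$; the nearby orders $\varepsilon^{-3}$ and $\varepsilon^{4s+1}$ die by antisymmetry and the $\varepsilon^{-1}$ family contributes only $O(\varepsilon^{-2s})\to0$ after normalization, so nothing is missed. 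Your optional operator-norm argument for pinning the constant (rank-one lower bound $\mathscr{N}\otimes\mathscr{N}$ from the $j=0$ term, upper bound $\Pi_\infty^{(s,v)}\le\mathrm{Id}$ from the projection property of the Christoffel--Darboux kernels, and $\|\mathscr{N}\|_{L^2}=1$) is not in the paper and is a nice sanity check, though it still requires knowing that $\mathscr{S}^{(1)}_\infty(0,\cdot)$ is rank one with the stated profile, so it does not spare you the expansion.
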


\begin{proof}
By \eqref{decompose}, it suffices to show that $$\mathscr{S}_\infty^{(1)} (0, z_1, z_2)   =  \mathscr{N}^{(s,v)}(z_1) \mathscr{N}^{(s,v)}(z_2). $$ By similar arguments in the proof of Theorem \ref{thm-case2-1}, $ \mathscr{S}_\infty^{(1)} (\varepsilon, z_1, z_2) $ is given by the formula \begin{align}\label{partial-sum}  \mathscr{S}_\infty^{(1)} (\varepsilon, z_1, z_2)  =   \varepsilon \cdot \frac{A_{II}^{(s,v)} (\varepsilon, z_1) B_{II}^{(s,v)} (\varepsilon, z_2)  - A_{II}^{(s,v)} (\varepsilon, z_2) B_{II}^{(s,v)} (\varepsilon, z_1) }{ 2\sqrt{(v + z_1) (v+z_2)} \big[C_{II}^{(s, v)} (\varepsilon) \big]^2 (z_1- z_2) } . \end{align} For $m = 1$,  we have  $$A_{II}^{(s,v)} (\varepsilon, z_i) =  \left |\begin{array}{cc} K_s(\varepsilon \sqrt{v}) & \sqrt{v}  K_s'( \varepsilon \sqrt{v})  \vspace{3mm} \\ J_s(\varepsilon \sqrt{z_i}) & \sqrt{z_i}  J_s' ( \varepsilon \sqrt{z_i})   \end{array} \right|,$$  and  $B_{II}^{(s, v)} (\varepsilon, z_i) = \frac{\partial}{\partial \varepsilon} A_{II}^{(s,v)} (\varepsilon, z_i)$,  $C_{II}^{(s,v)}(\varepsilon) = K_s(\varepsilon \sqrt{v})$. By the differential formula $(\frac{f}{g})' = \frac{f' g - fg' }{g^2} = \frac{1}{g^2} \left| \begin{array}{cc} g & g' \\ f  & f' \end{array} \right|$,  we have  \begin{align*}  A_{II}^{(s,v)}(\varepsilon, z_i ) =  [K_s(\varepsilon \sqrt{v})]^2 \frac{\partial }{ \partial \varepsilon } \left( \frac{J_s(\varepsilon \sqrt{z_i} )}{ K_s(\varepsilon \sqrt{v}) }\right), \end{align*} and \begin{align*} & A_{II}^{(s,v)} (\varepsilon, z_1) B_{II}^{(s,v)} (\varepsilon, z_2)  - A_{II}^{(s,v)} (\varepsilon, z_2) B_{II}^{(s,v)} (\varepsilon, z_1)  \\ = &  [A_{II}^{(s,v)} (\varepsilon, z_1)]^2 \frac{\partial }{\partial \varepsilon} \left(\frac{A_{II}^{(s,v)}(\varepsilon, z_2)}{A_{II}^{(s,v)} (\varepsilon, z_1) }\right).    \end{align*} Hence  \begin{align}\label{iterate-diff}  \begin{split}  & \frac{A_{II}^{(s,v)} (\varepsilon, z_1) B_{II}^{(s,v)} (\varepsilon, z_2)  - A_{II}^{(s,v)} (\varepsilon, z_2) B_{II}^{(s,v)} (\varepsilon, z_1) }{  \big[K_s (\varepsilon \sqrt{v}) \big]^2 }  \\  & = \left(K_s(\varepsilon \sqrt{v})  \right)^2 \left\{    \frac{\partial }{ \partial \varepsilon } \left( \frac{J_s(\varepsilon \sqrt{z_1} )}{ K_s(\varepsilon \sqrt{v}) }\right)  \right\}^2 \frac{\partial }{ \partial \varepsilon }  \left\{ \frac{ \frac{\partial }{ \partial \varepsilon } \left( \frac{J_s(\varepsilon \sqrt{z_2} )}{ K_s(\varepsilon \sqrt{v}) }\right)}{ \frac{\partial }{ \partial \varepsilon } \left( \frac{J_s(\varepsilon \sqrt{z_1} )}{ K_s(\varepsilon \sqrt{v}) }\right) }  \right\}. \end{split} \end{align}  As $\varepsilon\to 0+$, we have \begin{align} \label{iterate1}\left(K_s(\varepsilon \sqrt{v}) \right)^2 \sim \left(\frac{\pi}{2 \sin (s\pi) } \frac{ (\frac{\sqrt{v}}{2})^s }{\Gamma(s + 1)}\right)^2 \varepsilon^{2s}, \end{align} \begin{align} \label{iterate2} \left\{    \frac{\partial }{ \partial \varepsilon } \left( \frac{J_s(\varepsilon \sqrt{z_1} )}{ K_s(\varepsilon \sqrt{v}) }\right)  \right\}^2 \sim \left(\frac{2 \Gamma(s+1) (\frac{\sqrt{z_1}}{2})^s}{\frac{\pi}{ 2 \sin (s \pi) } \Gamma(-s)  (\frac{\sqrt{v}}{2})^{3s}}\right)^2 \varepsilon^{-4s -2} , \end{align}  \begin{align} \label{iterate3} \frac{\partial}{\partial \varepsilon} \left\{ \frac{ \frac{\partial }{ \partial \varepsilon } \left( \frac{J_s(\varepsilon \sqrt{z_2} )}{ K_s(\varepsilon \sqrt{v}) }\right)}{ \frac{\partial }{ \partial \varepsilon } \left( \frac{J_s(\varepsilon \sqrt{z_1} )}{ K_s(\varepsilon \sqrt{v}) }\right) } \right\} \sim \left(\frac{\sqrt{z_2}}{\sqrt{z_1}}\right)^{s}    \frac{\Gamma(-s)}{\Gamma(s+1)} \left(\frac{\sqrt{v}}{2}\right)^{2s} \frac{z_1- z_2}{2} \varepsilon^{2s+1}. \end{align}

For example,  let us check the asymptotic formula \eqref{iterate3}. We have $$ \frac{J_s(\varepsilon \sqrt{z_i} )}{ K_s(\varepsilon \sqrt{v}) }   = \frac{2 \sin (s\pi) }{\pi} \left(\frac{\sqrt{z_i}}{2}\right)^s \frac{\mathscr{F}(\varepsilon^2 z_i)}{\mathscr{G}(\varepsilon^2) - \varepsilon^{-2s} \mathscr{H}(\varepsilon^2) }, $$  where $\mathscr{F}, \mathscr{G}, \mathscr{H}$ are entire functions given by $$\mathscr{F}(z)= \sum_{\nu=0}^\infty \frac{(-1)^{\nu} (\frac{z}{4})^\nu}{\nu ! \Gamma(\nu + s + 1)}, \quad \mathscr{G}(z)= \left(\frac{\sqrt{v}}{2}\right)^s\sum_{\nu=0}^\infty \frac{ (\frac{z}{4})^\nu}{\nu ! \Gamma(\nu + s + 1)} , $$ $$   \mathscr{H}(z)= \left(\frac{\sqrt{v}}{2}\right)^{-s}\sum_{\nu=0}^\infty \frac{ (\frac{z}{4})^\nu}{\nu ! \Gamma(\nu - s + 1)} . $$ It follows that \begin{align*}   \frac{\partial}{\partial \varepsilon}\left\{ \frac{ \frac{\partial }{ \partial \varepsilon } \left( \frac{J_s(\varepsilon \sqrt{z_2} )}{ K_s(\varepsilon \sqrt{v}) }\right)}{ \frac{\partial }{ \partial \varepsilon } \left( \frac{J_s(\varepsilon \sqrt{z_1} )}{ K_s(\varepsilon \sqrt{v}) }\right) }\right\} = \left(\frac{\sqrt{z_2}}{\sqrt{z_1}}\right)^{s}  2\varepsilon \cdot  \underbrace{ \frac{\partial }{\partial x } \left[ \frac{        \frac{\partial }{\partial x} \left( \frac{\mathscr{F}(xz_2)}{ \mathscr{G} (x) - x^{-s} \mathscr{H}(x)}\right)             }{      \frac{\partial }{\partial x} \left( \frac{\mathscr{F}(xz_1)}{ \mathscr{G} (x) - x^{-s} \mathscr{H}(x)}\right)        }\right](\varepsilon^2)}_{= : \,  Q(\varepsilon^2)} .  \end{align*}  For $i = 1, 2$,  let us denote \begin{align*} Q_i(x) = &  z_i\mathscr{F}'(xz_i) \Big(x^{s+1}\mathscr{G}(x)   - x \mathscr{H}(x)\Big)  \\ & -  \mathscr{F}(xz_i) \Big( x^{s+1} \mathscr{G}'(x) + s  \mathscr{H}(x) - x \mathscr{H}'(x) \Big) , \end{align*} then $Q(x) = \frac{\partial}{ \partial x } \left[ \frac{ Q_2(x) }{Q_1(x) }\right].$  Note that $Q_1(0) = Q_2(0) = -s \mathscr{F}(0) \mathscr{H}(0)$. Now we  obtain that, as $x \to 0 + $, \begin{align*} Q (x) \sim  & \frac{(s+1) x^s}{Q_1(0)^2}  \cdot \Big\{ Q_2(0) \big( z_2 \mathscr{F}'(0) \mathscr{G}(0) - \mathscr{F}(0) \mathscr{G}'(0)\big)  \\ & -  Q_1(0) \big( z_1 \mathscr{F}'(0) \mathscr{G}(0) - \mathscr{F}(0) \mathscr{G}'(0)\big) \Big\} \\ \sim &  \frac{(s+1) x^s}{-s \mathscr{F}(0) \mathscr{H}(0)} \mathscr{F}'(0) \mathscr{G}(0) (z_2-z_1)  \\ \sim &  \frac{\Gamma(-s)}{\Gamma(s+1)} \left(\frac{\sqrt{v}}{2}\right)^{2s} \frac{z_1- z_2}{4} x^s.\end{align*} Combining the above asymptotics, we get \eqref{iterate3}.

Substituting \eqref{iterate1}, \eqref{iterate2} and \eqref{iterate3} to \eqref{iterate-diff}, we have \begin{align*}  & \frac{A_{II}^{(s,v)} (\varepsilon, z_1) B_{II}^{(s,v)} (\varepsilon, z_2)  - A_{II}^{(s,v)} (\varepsilon, z_2) B_{II}^{(s,v)} (\varepsilon, z_1) }{  \big[K_s (\varepsilon \sqrt{v}) \big]^2 }   \\ \sim & \frac{2(z_1 -z_2)}{\Gamma(-s) \Gamma(s+1)} \left(\frac{\sqrt{z_1z_2}}{ v}\right)^s \varepsilon^{-1}, \text{ as } \varepsilon \to 0+.   \end{align*} Finally, by \eqref{partial-sum}, we get the formula for $ \mathscr{S}_\infty^{(1)} (0, z_1, z_2) $:    \begin{align*}      \frac{1}{\Gamma(-s) \Gamma(s  + 1)} \frac{1}{ \sqrt{(v+z_1) (v+z_2)}} \left(\frac{\sqrt{z_1z_2}}{v}\right)^s  = \mathscr{N}^{(s,v)}(z_1) \mathscr{N}^{(s,v)}(z_2). \end{align*}

\end{proof}

For $\alpha > -1$, we denote by $\widetilde{J}^{(\alpha)} (x, y)$ the Bessel kernel, i.e., $$\widetilde{J}^{(\alpha)}(x, y) = \frac{J_\alpha(\sqrt{x}) \sqrt{y} J_\alpha'(\sqrt{y}) - J_\alpha(\sqrt{y}) \sqrt{x} J_\alpha'(\sqrt{x})}{2(x-y)} .$$ It is well-known (cf. e.g. \cite{Tracy-Widom94}) that the Bessel kernel has the following integral representation: $$\widetilde{J}^{(\alpha)} (x, y) =\frac{1}{4} \int_0^1 J_\alpha(\sqrt{t x}) J_\alpha(\sqrt{ty}) dt .$$

\begin{prop}
Let $m = 1$ and $ - 1 < s < 0$. Then $$\lim_{ v \to 0+} \Pi_\infty^{(s,v)} (z_1, z_2) =\widetilde{J}^{(s +1)} (z_1, z_2).$$ Moreover, the convergence is uniform as soon as $z_1, z_2$ are in compact subsets of $(0, \infty)$.
\end{prop}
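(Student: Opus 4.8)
The plan is to read everything off the decomposition established in Proposition~\ref{one-rank-per},
\[
\Pi_\infty^{(s,v)}(z_1,z_2) = \mathscr{N}^{(s,v)}(z_1)\,\mathscr{N}^{(s,v)}(z_2) + \int_0^1 \mathscr{I}^{(s,v)}(\kappa,z_1,z_2)\,d\kappa,
\]
and to let $v\to 0+$ term by term. Inserting the definition of $\mathscr{N}^{(s,v)}$, the rank-one part equals
\[
\mathscr{N}^{(s,v)}(z_1)\mathscr{N}^{(s,v)}(z_2) = \frac{v^{-s}\,(z_1z_2)^{s/2}}{\Gamma(-s)\Gamma(s+1)\,\sqrt{(v+z_1)(v+z_2)}},
\]
which tends to $0$ as $v\to 0+$ because $-s>0$, and uniformly so for $z_1,z_2$ in a fixed compact subset of $(0,\infty)$ (bound $(z_1z_2)^{s/2}$ and $\sqrt{(v+z_1)(v+z_2)}$ crudely on the compact set). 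Thus only the integral survives and must be shown to converge to $\widetilde{J}^{(s+1)}(z_1,z_2)$.

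For the integrand I would first put $\mathscr{I}^{(s,v)}$ into a factored form: expanding the two $2\times2$ determinants in \eqref{I-formula} and cancelling $[K_s(\kappa\sqrt v)]^2$ gives
\[
\mathscr{I}^{(s,v)}(\kappa,z_1,z_2) = \frac{\kappa}{2\sqrt{(v+z_1)(v+z_2)}}\prod_{i=1}^{2}\Big(\sqrt{z_i}\,J_s'(\kappa\sqrt{z_i}) - g_v(\kappa)\,J_s(\kappa\sqrt{z_i})\Big),
\]
where $g_v(\kappa):=\sqrt v\,K_s'(\kappa\sqrt v)/K_s(\kappa\sqrt v)$. By \eqref{differential-relation-K}, $g_v(\kappa) = \tfrac{s}{\kappa} - \sqrt v\,K_{s+1}(\kappa\sqrt v)/K_s(\kappa\sqrt v)$; and from the small-argument (McDonald) asymptotics $K_s(t)\sim\tfrac12\Gamma(-s)(t/2)^s$ and $K_{s+1}(t)\sim\tfrac12\Gamma(s+1)(t/2)^{-s-1}$, valid precisely because $-1<s<0$, one gets $\sqrt v\,K_{s+1}(\kappa\sqrt v)/K_s(\kappa\sqrt v)=O\big(v^{-s}\kappa^{-2s-1}\big)\to 0$ for each fixed $\kappa>0$. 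Hence $g_v(\kappa)\to s/\kappa$ pointwise, and applying \eqref{differential-relation-J} in the form $J_s'(t)-\tfrac{s}{t}J_s(t)=-J_{s+1}(t)$ to each factor yields
\[
\lim_{v\to 0+}\mathscr{I}^{(s,v)}(\kappa,z_1,z_2) = \frac{\kappa}{2}\,J_{s+1}(\kappa\sqrt{z_1})\,J_{s+1}(\kappa\sqrt{z_2}).
\]

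It remains to pass to the limit under the integral, which I would do by dominated convergence. On any $[\delta,1]$ the quantity $\sqrt v\,K_{s+1}(\kappa\sqrt v)/K_s(\kappa\sqrt v)\to 0$ uniformly, so $g_v$ and hence $\mathscr{I}^{(s,v)}$ are bounded there uniformly in $v\le v_0$; near $\kappa=0$, the elementary bounds $J_{s+1}(t)=O(t^{s+1})$ and $J_s(t)=O(t^s)$ together with the estimate for $g_v$ show that each factor in the product is $O(\kappa^{s+1})+O(v^{-s}\kappa^{-s-1})=O(\kappa^{-s-1})$ uniformly for $0<v\le v_0$ and $z_i$ in the compact set, so $|\mathscr{I}^{(s,v)}(\kappa,z_1,z_2)|\le C\,\kappa^{-2s-1}$, which is integrable on $(0,1)$ exactly because $s<0$ (the other terms being harmless since $s>-1$). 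Alternatively one may dominate $|\mathscr{I}^{(s,v)}(\kappa,z_1,z_2)|$ through the Cauchy--Schwarz bound $|\mathscr{I}^{(s,v)}(\kappa,z_1,z_2)|^2\le \mathscr{I}^{(s,v)}(\kappa,z_1,z_1)\mathscr{I}^{(s,v)}(\kappa,z_2,z_2)$ noted before Proposition~\ref{one-rank-per} and estimate the diagonal. Dominated convergence then gives $\int_0^1\mathscr{I}^{(s,v)}\,d\kappa\to\int_0^1\tfrac{\kappa}{2}J_{s+1}(\kappa\sqrt{z_1})J_{s+1}(\kappa\sqrt{z_2})\,d\kappa$, and the substitution $t=\kappa^2$ turns the right-hand side into $\tfrac14\int_0^1 J_{s+1}(\sqrt{tz_1})J_{s+1}(\sqrt{tz_2})\,dt=\widetilde{J}^{(s+1)}(z_1,z_2)$ by the integral representation of the Bessel kernel recalled above. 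Uniformity in $z_1,z_2$ on compacts follows by splitting the integral at $\delta$: the tail $\int_0^\delta$ of both $\mathscr{I}^{(s,v)}$ and of its limit is $\le C'\delta^{-2s}$ uniformly (hence negligible for $\delta$ small), while on $[\delta,1]$ the convergence of the integrand is uniform in $(\kappa,z_1,z_2)$. The main obstacle, and the only delicate point, is precisely this uniform-integrability step at the origin: one must check that the correction $\sqrt v\,K_{s+1}/K_s$—which vanishes with $v$ but grows like $\kappa^{-2s-1}$ as $\kappa\to 0$—still admits an integrable majorant, which works only because the exponents conspire to produce the power $\kappa^{-2s-1}$ with $-2s-1>-1$, i.e.\ because $s<0$.
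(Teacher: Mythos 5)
Your proposal is correct and follows essentially the same route as the paper: it rests on the decomposition of Proposition \ref{one-rank-per}, kills the rank-one term via $v^{-s}\to 0$, identifies the pointwise limit of $\mathscr{I}^{(s,v)}$ as $\tfrac{\kappa}{2}J_{s+1}(\kappa\sqrt{z_1})J_{s+1}(\kappa\sqrt{z_2})$ using \eqref{differential-relation-J}, \eqref{differential-relation-K} and the small-argument asymptotics of $K_s$, and controls the contribution near $\kappa=0$ uniformly in $v$. The only cosmetic difference is that you justify the interchange by dominated convergence with the explicit majorant $C\kappa^{-2s-1}$, whereas the paper splits the integral at $\varepsilon$ and bounds $\int_0^\varepsilon$ directly; both hinge on the same exponent count, and your argument is, if anything, spelled out in more detail.
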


\begin{proof}
Fix $-1< s < 0$. It is easy to see that  $$\lim_{v \to 0+} \mathscr{N}^{(s,v)} (z) = 0, $$ and the convergence is  uniform for $ z$ in compact subset of $ (0, \infty)$.

By \eqref{differential-relation-J} and \eqref{differential-relation-K}, we have $$  A_{II}^{(s,v)}(\kappa, z_i)   =  \left |\begin{array}{cc} K_s(\kappa \sqrt{v}) & -   \sqrt{v}  K_{s+1}( \kappa \sqrt{v})  \vspace{3mm} \\ J_s(\kappa \sqrt{z_i}) &  - \sqrt{z_i}J_{s+1} ( \kappa \sqrt{z_i})   \end{array} \right|.  $$ Then apply the asymptotics of $K_s$, $K_{s +1}$ near 0 to get \begin{align*} & \lim_{v \to \infty} \frac{A_{II}^{(s,v)} (\kappa, z_i)}{K_s(\kappa \sqrt{v})}   = - \sqrt{z_i} J_{s + 1} ( \kappa \sqrt{z_i}).\end{align*}   It follows that $$\lim_{v \to 0 + } \mathscr{I}^{(s,v)} (\kappa, z_1, z_2)  = \frac{J_{s + 1}(\kappa \sqrt{z_1}) J_{ s + 1} ( \kappa \sqrt{z_2})}{2 } \cdot \kappa.$$ For any $0 < \varepsilon < 1$,   the convergence is uniform as long as $\kappa\in [\varepsilon, 1]$ and $z_1, z_2$ in compact subsets of $(0, \infty)$. Hence $$\lim_{v \to 0+ } \int_\varepsilon^1 \mathscr{I}^{(s,v)} (\kappa, z_1, z_2)  d \kappa =    \int_{\varepsilon}^1 \frac{J_{s + 1}(\kappa \sqrt{z_1}) J_{ s + 1} ( \kappa \sqrt{z_2})}{2} \cdot \kappa d \kappa. $$ The above term tends to $$\int_0^1 \frac{J_{s + 1}(\kappa \sqrt{z_1}) J_{ s + 1} ( \kappa \sqrt{z_2})}{2} \cdot \kappa d \kappa  =   \frac{1}{4 } \int_0^1  J_{s + 1}(\sqrt{t z_1}) J_{ s + 1} (\sqrt{tz_2} )  dt $$ uniformly as $z_1, z_2$ in compact subsets of $(0, \infty)$, when $\varepsilon \to 0$.  It is easy to see that $$\sup_{0< v< R,\\ r < |z_1|, |z_2| < R  }\left| \int_0^\varepsilon \mathscr{I}^{(s,v)} (\kappa, z_1, z_2) d\kappa \right| \lesssim \varepsilon^{s +1}.$$ Hence $$\lim_{ v \to 0+} \Pi_\infty^{(s,v)} (z_1, z_2) =\widetilde{J}^{(s +1)} (z_1, z_2),$$ with uniform convergence as long as $z_1, z_2$ are in compact subsets of $(0, \infty)$.

\end{proof}

\begin{rem}
When $m \ge 2$ and $ -1< s < m-1$,  the situation is similar, but the formula and the proof will be slightly tedious.
\end{rem}

\subsection{Explicit Kernels for Scaling Limit: Case III}  Let $s> -1$. We  consider in this section  a sequence of positive real numbers $\gamma^{(n)}$ and modify the Jacobi weights given by $$\widehat{w}_{s}^{(n)} (t)  = \frac{w_{s}(t)}{( 1 + \gamma^{(n)} - t)^2} = \frac{(1 - t)^{s}}{(1 + \gamma^{(n)}- t)^2}. $$ The $n$-th Christoffel-Darboux kernel associated with $\widehat{w}_{ s}^{(n)}$ is denoted by $\Phi_n^{(n)} (x_1, x_2)$. We will investigate the scaling limit of $\Phi_n^{(n)} ( x_1^{(n)}, x_2^{(n)})$ in the regime: \begin{align}\label{case3} \begin{split}  x_i^{(n)} = 1  - \frac{z_i}{2n^2}   \, & \text{ with } z_i > 0 , i = 1, 2. \\  \gamma^{(n)} = \frac{u}{2n^2} \, &  \text{ with }  u > 0.  \end{split} \end{align}

\subsubsection{Explicit formulae for orthogonal polynomials and Christoffel-Darboux kernels}

For $j \ge 2$,  we set  $$p_j^{(n)} (t): = \left|\begin{array}{ccc}Q_{j-2}^{(s)} (1 + \gamma^{(n)}) & Q_{j-1}^{(s)} (1 + \gamma^{(n)}) & Q_{j}^{(s)} (1 + \gamma^{(n)}) \vspace{3mm}  \\R_{j-2}^{(s)} (1 + \gamma^{(n)}) & R_{j-1}^{(s)} (1 + \gamma^{(n)}) & R_{j}^{(s)} (1 + \gamma^{(n)}) \vspace{3mm}\\P_{j-2}^{(s)} (t) & P_{j-1}^{(s)} (t) & P_{j}^{(s)} (t)\end{array}\right|; $$ $$ e_{j}^{(n)} : = \left|\begin{array}{cc}Q_{j-2}^{(s)} (1 + \gamma^{(n)}) & Q_{j-1}^{(s)} (1 + \gamma^{(n)}) \vspace{3mm} \\R_{j-2}^{(s)} (1 + \gamma^{(n)}) & R_{j-1}^{(s)} (1 + \gamma^{(n)})\end{array}\right|.$$ The leading term of $p_j^{(n)}$ is $\widehat{k}_j^{(n)} = e_j^{(n)} k_j^{(s)}.$

\begin{prop}
For $j \ge 2$, the polynomial $q_j^{(n )}$ is the $j$-th orthogonal polynomial with respect to the weight $\widehat{w}^{(n)}_s$ on $ [-1, 1]$.
\end{prop}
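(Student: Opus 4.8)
The plan is to prove the statement (in which $q_j^{(n)}$ should be read as $p_j^{(n)}$, the polynomial introduced just above) directly, in the spirit of Uvarov's formula for orthogonal polynomials of a rationally modified weight. Since $\widehat w_s^{(n)}$ is a positive weight on $[-1,1]$ with finite moments of all orders, it has a unique $j$-th monic orthogonal polynomial $\hat p_j$, so it suffices to show that $p_j^{(n)}$ is a polynomial of degree exactly $j$ that is orthogonal, with respect to $\widehat w_s^{(n)}$, to every polynomial of lower degree. Expanding the defining determinant of $p_j^{(n)}$ along its last row exhibits $p_j^{(n)}$ as a linear combination of $P_{j-2}^{(s)}, P_{j-1}^{(s)}, P_j^{(s)}$; in particular it is a polynomial of degree $\le j$ whose coefficient of $t^j$ equals $e_j^{(n)} k_j^{(s)}=\widehat k_j^{(n)}$.

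For the orthogonality, write $a=1+\gamma^{(n)}>1$ and, for $0\le i\le j-1$, decompose by polynomial division and partial fractions
\[
\frac{t^i}{(a-t)^2}=g_i(t)+\frac{A_i}{a-t}+\frac{B_i}{(a-t)^2},\qquad \deg g_i\le i-2\le j-3,
\]
with $g_i\equiv 0$ when $i\le 1$. Then $\int_{-1}^{1}p_j^{(n)}(t)\,t^i\,\widehat w_s^{(n)}(t)\,dt$ splits into three pieces, each of which I compute by letting the relevant integral act on the last row of the determinant (the first two rows being constant in $t$). In the $g_i$-piece the last row becomes $\bigl(\int P_{j-2}^{(s)}g_i\,w_s,\ \int P_{j-1}^{(s)}g_i\,w_s,\ \int P_j^{(s)}g_i\,w_s\bigr)=0$, because $\deg g_i<j-2$ and Jacobi polynomials are $w_s$-orthogonal to lower-degree polynomials; this is a determinant with a zero row, hence vanishes. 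In the $1/(a-t)$-piece the last row becomes, by the definition of $\widehat Q_n^{(s)}$, equal to $\bigl(\widehat Q^{(s)}_{j-2}(a),\widehat Q^{(s)}_{j-1}(a),\widehat Q^{(s)}_{j}(a)\bigr)=2(a-1)^s\bigl(Q^{(s)}_{j-2}(a),Q^{(s)}_{j-1}(a),Q^{(s)}_{j}(a)\bigr)$, which is proportional to the first row, so the determinant vanishes. In the $1/(a-t)^2$-piece the last row becomes, by the definition of $R_n^{(s)}$, equal to $(a-1)^s\bigl(R^{(s)}_{j-2}(a),R^{(s)}_{j-1}(a),R^{(s)}_{j}(a)\bigr)$, proportional to the second row, so again the determinant vanishes. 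Hence $\int_{-1}^1 p_j^{(n)}(t)\,t^i\,\widehat w_s^{(n)}(t)\,dt=0$ for all $0\le i\le j-1$.

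It remains to check that $\deg p_j^{(n)}=j$, i.e.\ that $e_j^{(n)}\ne 0$. The very same partial-fraction equivalences show, conversely, that the (unique) monic polynomial $\hat p_j$ must be of the form $c_{j-2}P_{j-2}^{(s)}+c_{j-1}P_{j-1}^{(s)}+\tfrac1{k_j^{(s)}}P_j^{(s)}$ with $(c_{j-2},c_{j-1})$ a solution of the $2\times 2$ linear system whose coefficient matrix is $\bigl(\begin{smallmatrix}Q^{(s)}_{j-2}(a)&Q^{(s)}_{j-1}(a)\\ R^{(s)}_{j-2}(a)&R^{(s)}_{j-1}(a)\end{smallmatrix}\bigr)$, of determinant $e_j^{(n)}$, and right-hand side $-\tfrac1{k_j^{(s)}}\bigl(Q^{(s)}_j(a),R^{(s)}_j(a)\bigr)^{T}$. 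Since $\hat p_j$ exists and is unique, this system has exactly one solution, so $e_j^{(n)}\ne 0$; Cramer's rule then identifies $\bigl(c_{j-2},c_{j-1},\tfrac1{k_j^{(s)}}\bigr)$, up to a common factor, with the cofactor vector of the last row of the defining determinant, whence $p_j^{(n)}=\widehat k_j^{(n)}\hat p_j$. Thus $p_j^{(n)}$ is, up to the explicit nonzero scalar $\widehat k_j^{(n)}$, the $j$-th orthogonal polynomial for $\widehat w_s^{(n)}$, proving the proposition.

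The conceptual core of the argument is short: once the determinant is integrated against $w_s/(a-t)$ or $w_s/(a-t)^2$, its last row becomes proportional to the $Q$-row or the $R$-row and the determinant dies; the routine polynomial division disposes of the genuine polynomial part. The only point that needs care is the non-degeneracy $e_j^{(n)}\ne 0$ (without which the determinant formula would collapse in degree), together with checking that the partial-fraction bookkeeping is handled correctly in the boundary cases $j=2,3$, where there is no polynomial part $g_i$ or only a degree-zero one.
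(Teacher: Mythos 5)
Your proof is correct, but it takes a genuinely different route from the paper. The paper's proof is a two-step reduction: it invokes Uvarov's formula once to identify the orthogonal polynomials $\widehat{p}_j^{(n)}$ for the singly-modified weight $w_s(t)/(1+\gamma^{(n)}-t)$, invokes it a second time to get the orthogonal polynomial for $w_s(t)/(1+\gamma^{(n)}-t)^2$ as a $2\times 2$ determinant in $\widehat{p}_{j-1}^{(n)},\widehat{p}_j^{(n)}$ and their Cauchy transforms, and then asserts (``we can easily verify'') that this expression is a multiple of the $3\times 3$ determinant $p_j^{(n)}$. You instead verify orthogonality directly: the partial-fraction split $t^i/(a-t)^2=g_i+A_i/(a-t)+B_i/(a-t)^2$ sends the last row of the defining determinant to zero (Jacobi orthogonality, since $\deg g_i\le j-3$), to a multiple of the $Q$-row (by the definition of $\widehat{Q}_n^{(s)}$), or to a multiple of the $R$-row, so each piece kills the determinant. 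What your approach buys is that it is self-contained (no appeal to Uvarov's formula as a black box, and no unverified final identification) and that it isolates and actually proves the non-degeneracy $e_j^{(n)}\neq 0$, which is needed for $p_j^{(n)}$ to have degree exactly $j$ and which the paper passes over silently; your uniqueness-of-the-monic-orthogonal-polynomial argument for this is sound, since a singular coefficient matrix would give either no monic solution or infinitely many. What the paper's route buys is brevity and the intermediate object $\widehat{p}_j^{(n)}$, though that object is not used elsewhere. One presentational note: your closing Cramer's-rule identification of $p_j^{(n)}$ with $\widehat{k}_j^{(n)}\hat p_j$ is redundant once you have both the direct orthogonality and $e_j^{(n)}\neq 0$, since a degree-$j$ polynomial orthogonal to all lower degrees is automatically a scalar multiple of the monic orthogonal polynomial.
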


\begin{proof}
By the Uvarov formula, we know that for $ j \ge 1$, $$\widehat{p}_j^{(n)} (t) = \left| \begin{array}{cc}  Q_{j-1}^{(s)} (1 + \gamma^{(n)} ) & Q_{j}^{(s)} (1 + \gamma^{(n)} )  \vspace{3mm} \\ P_{j-1}^{(s)} (t ) & P_{j}^{(s)} (t )  \end{array} \right|$$ is the $j$-th orthogonal polynomial with respect to the weight $\frac{w_s(t) }{ 1 + \gamma^{(n)} - t }$. Applying the Uvarov formula again, we know that for $j \ge 2$,  \begin{align}\label{iterated-uvarov} \left| \begin{array}{cc} \mathscr{C}_{ j - 1}(1 + \gamma^{(n)}) & \mathscr{C}_j(1 + \gamma^{(n)})   \vspace{2mm} \\ \widehat{p}_{j-1}^{(n)} (t ) & \widehat{p}_j^{(n)} (t) \end{array}\right| \end{align} is the $j$-th orthogonal polynomial with respect to the weight $\frac{w_s(t) }{( 1 + \gamma^{(n)} - t)^2}$, where we denote  by $$\mathscr{C}_j(x) =\int_{-1}^1 \frac{\widehat{p}_j^{(n)} (t) }{x - t} \cdot \frac{w_s(t)}{ 1 + \gamma^{(n)} - t}dt . $$ We can easily verify  that the polynomial \eqref{iterated-uvarov} is a multiple of $p_j^{(n)}$. \end{proof}

\begin{defn}
For  $j \ge 2 $,  denote $$ \widehat{h}_{ j}^{(n) } = \int_{-1}^{1} \big[ p_j^{(n) } (t) \big]^2   \frac{(1 - t)^{s}}{(1  + \gamma^{(n)} - t)^2} dt. $$
\end{defn}

\begin{prop}
For $ j \ge 2$, we have the identity \begin{eqnarray*}  \widehat{h}_{j}^{(n) }  =  \frac{e_{ j}^{(n)}   e_{ j + 1}^{(n)}  k_{j }^{(s)}  h_{j-2}^{(s )}}{k_{j-2}^{(s)}} .\end{eqnarray*}
\end{prop}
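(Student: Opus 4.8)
The argument is a direct transcription of the proofs of the corresponding norm identities in Cases I and II, with the single perturbation factor $\prod_i(1+r_i^{(n)}-t)$ replaced by the square $(1+\gamma^{(n)}-t)^2$; the two ingredients that make it work are the factorization $(1+\gamma^{(n)}-t)^2\,\widehat{w}_s^{(n)}(t)=w_s(t)$ and the elementary fact that $t^j-(1+\gamma^{(n)}-t)^2\,t^{j-2}$ is a polynomial of degree at most $j-1$.

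First I would read off two cofactor coefficients of the $3\times 3$ determinant defining $p_j^{(n)}$. Expanding along the last row, the entry $P_j^{(s)}(t)$ is multiplied by the minor obtained by deleting the last row and last column, namely $e_j^{(n)}$, with sign $(-1)^{3+3}=+1$; this reproves $\widehat{k}_j^{(n)}=e_j^{(n)}k_j^{(s)}$ and gives $p_j^{(n)}(t)=e_j^{(n)}P_j^{(s)}(t)+(\text{lower degree})$, the lower-degree part being a constant-coefficient combination of $P_{j-1}^{(s)}$ and $P_{j-2}^{(s)}$. The entry $P_{j-2}^{(s)}(t)$ is multiplied by the minor obtained by deleting the last row and first column, namely $e_{j+1}^{(n)}$, with sign $(-1)^{3+1}=+1$, so also $p_j^{(n)}(t)=e_{j+1}^{(n)}P_{j-2}^{(s)}(t)+(\text{combination of }P_{j-1}^{(s)},P_{j}^{(s)})$.

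The computation then proceeds by three orthogonality reductions, using at each step that, by the previous proposition, $p_j^{(n)}$ is orthogonal to every polynomial of degree $<j$ with respect to $\widehat{w}_s^{(n)}$. Starting from $\widehat{h}_j^{(n)}=\int_{-1}^1 [p_j^{(n)}(t)]^2\,\widehat{w}_s^{(n)}(t)\,dt$, write $p_j^{(n)}=e_j^{(n)}P_j^{(s)}+(\text{lower degree})$ and $P_j^{(s)}=k_j^{(s)}t^j+(\text{lower degree})$ to obtain $\widehat{h}_j^{(n)}=e_j^{(n)}k_j^{(s)}\int_{-1}^1 p_j^{(n)}(t)\,t^j\,\widehat{w}_s^{(n)}(t)\,dt$. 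Next replace $t^j$ by $(1+\gamma^{(n)}-t)^2\,t^{j-2}$ modulo a polynomial of degree $<j$ and use $(1+\gamma^{(n)}-t)^2\,\widehat{w}_s^{(n)}=w_s$, which turns the integral into $e_j^{(n)}k_j^{(s)}\int_{-1}^1 p_j^{(n)}(t)\,t^{j-2}\,w_s(t)\,dt$. Finally insert $p_j^{(n)}=e_{j+1}^{(n)}P_{j-2}^{(s)}+(\text{combination of }P_{j-1}^{(s)},P_j^{(s)})$; since $P_{j-1}^{(s)}$ and $P_j^{(s)}$ are $w_s$-orthogonal to $t^{j-2}$, and $\int_{-1}^1 P_{j-2}^{(s)}(t)\,t^{j-2}\,w_s(t)\,dt=h_{j-2}^{(s)}/k_{j-2}^{(s)}$, the whole expression collapses to $\widehat{h}_j^{(n)}=e_j^{(n)}e_{j+1}^{(n)}k_j^{(s)}h_{j-2}^{(s)}/k_{j-2}^{(s)}$, as desired. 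I do not anticipate any genuine difficulty; the only points requiring care are the bookkeeping of the cofactor signs, so that the relevant coefficients come out as $+e_j^{(n)}$ and $+e_{j+1}^{(n)}$ rather than their negatives, and checking that at each of the three reductions the terms that are dropped really do have degree strictly below the degree being tested against.
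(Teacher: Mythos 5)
Your proposal is correct and follows essentially the same route as the paper: the same chain of orthogonality reductions $[p_j^{(n)}]^2 \leadsto p_j^{(n)}\cdot e_j^{(n)}k_j^{(s)}t^j \leadsto p_j^{(n)}\cdot t^{j-2}(1+\gamma^{(n)}-t)^2 \leadsto e_{j+1}^{(n)}P_{j-2}^{(s)}\cdot t^{j-2}$ against the appropriate weights. You are only more explicit than the paper about the cofactor signs giving $+e_j^{(n)}$ and $+e_{j+1}^{(n)}$, and both come out as you computed.
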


\begin{proof}
By the orthogonality property,  we have \begin{align*}  \widehat{h}_j^{(n) } & =  \int_{-1}^1 p_j^{(n)}(t)   e_{ j}^{(n)} P_{j }^{(s)} (t) \frac{(1 - t)^s}{(1  + \gamma^{(n)} - t)^2} dt \\ & =  e_{ j}^{(n)}   k_{j }^{(s)}  \int_{-1}^1 p_j^{(n)}(t)   \cdot  t^j \cdot  \frac{(1 - t)^{s}}{(1  + \gamma^{(n)} - t)^2} dt \\ & =  e_{j}^{(n)}   k_{j }^{(s)}  \int_{-1}^1 p_j^{(n)}(t)   \cdot  t^{j-2}  \cdot ( 1 + \gamma^{(n)} - t)^2 \cdot  \frac{(1 - t)^{s}}{(1  + \gamma^{(n)} - t)^2} dt  \\ & =   e_{ j}^{(n)}   k_{j }^{(s)}  \int_{-1}^1 p_j^{(n)}(t)   \cdot  t^{j-2} \cdot  w_{s} (t) dt \\ & =  e_{j}^{(n)}   k_{j }^{(s )}  \int_{-1}^1   e_{j + 1}^{(n)}   P_{j-2}^{(s)}(t)  \cdot  t^{j-2} \cdot w_{s } (t) dt \\ & =  \frac{e_{j}^{(n)}   e_{j + 1}^{(n)}  k_{j }^{(s )} }{k_{j-2}^{(s )}} \int_{-1}^1   \Big[ P_{j-2}^{(s )}(t) \Big]^2 w_{s } (t) dt  \\ & =  \frac{e_{j}^{(n)}   e_{ j + 1}^{(n)}  k_{j }^{(s)} }{k_{j-2}^{(s)}} h_{j-2}^{(s )}. \end{align*}
\end{proof}

The Christoffel-Darboux kernels $\Phi_n^{(n)}$ in the $(x_1^{(n)}, x_2^{(n)})$-coodinates are given by  \begin{align*}  & \Phi_n^{(n)} ( x_1^{(n)}, x_2^{(n)})  =  \sqrt{ \widehat{w}_{s}^{(n)}  (x_1^{(n)})   \widehat{w}_{s}^{(n)} (x_2^{(n)}) }  \sum_{\ell=0}^{n-1} \frac{p_\ell^{(n)} (x_1^{(n)})  p_\ell^{(n)} (x_2^{(n)}) }{\widehat{h}_\ell^{(n)}} \\ = &  \frac{ \sqrt{ \widehat{w}_{s}^{(n)}  (x_1^{(n)})   \widehat{w}_{s}^{(n)} (x_2^{(n)}) }    }{ \frac{\widehat{h}_{n-1}^{(n)} \widehat{k}_n^{(n)}}{ \widehat{k}_{n - 1}^{(n)}}   }  \cdot  \frac{p_n^{(n)} (x_1^{(n)})  p_{n-1}^{(n)} (x_2^{(n)}) - p_n^{(n)} (x_2^{(n)})  p_{n - 1}^{(n)} (x_1^{(n)}) }{ x_1^{(n)} - x_2^{(n)} }.  \end{align*} These kernels in the $(z_1, z_2)$-coodinates are given by $$ \widetilde{\Phi}_n^{(n)}( z_1, z_2) =  \frac{1}{2n^2}\Phi_n^{(n)} \Big( 1 - \frac{z_1}{2n^2}, 1 - \frac{z_2}{2n^2}\Big).$$

\subsubsection{Scaling limits}

\begin{prop}\label{matrix-AC-3}
In the regime \eqref{case3}, we have $$\lim_{n \to \infty} n^{-1-2s}   e_{\kappa_n}^{(n)} = 2^{2s+\frac{3}{2}} u^{-1-s} C_{ III}^{(s ,u )}(\kappa); $$  $$\lim_{n \to\infty} n^{1-3s} p_{\kappa_n}^{(n )} ( x_i^{(n)})  = 2^{3s+\frac{3}{2}} u^{-1-s} z^{-\frac{s}{2}} A_{ III}^{(s , u)}(\kappa, z_i),$$ where $$C_{ III}^{(s , u)}(\kappa)  = \left| \begin{array}{cc}K_{s} (\kappa \sqrt{u}) & u^{\frac{1}{2}}K_{s}'(\kappa \sqrt{u}) \vspace{3mm} \\ L_{s}(\kappa \sqrt{u}) & u^{\frac{1}{2}} L_{s}'(\kappa \sqrt{u})\end{array}\right| $$ and $$ A_{ III}^{(s, u)}(\kappa, z) = \left| \begin{array}{ccc}K_{s}(\kappa \sqrt{u}) & u^{\frac{1}{2}}K_{s}'(\kappa \sqrt{u}) & u K_{s}''(\kappa \sqrt{u}) \vspace{3mm} \\ L_{s}(\kappa \sqrt{u}) & u^{\frac{1}{2}} L_{s}'(\kappa \sqrt{u}) & u L_{s}''(\kappa \sqrt{u}) \vspace{3mm} \\J_{s}(\kappa \sqrt{z}) & z^{\frac{1}{2}}J_{s}'(\kappa \sqrt{z}) & z J_{s}''(\kappa \sqrt{z})\end{array}\right|. $$ Moreover, for any $\varepsilon > 0$,  the convergences are uniform as long as $ \kappa \in [\varepsilon, 1 ]$ and $z_i$ ranges compact simple connected subset of $\C\setminus \{0\}$.
\end{prop}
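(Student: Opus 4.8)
The plan is to repeat, with the obvious modifications, the argument used to prove Propositions~\ref{C} and~\ref{AC-matrix}. Since the columns of $e_j^{(n)}$ (respectively of $p_j^{(n)}(t)$) are built from the two consecutive indices $j-2,j-1$ (respectively the three consecutive indices $j-2,j-1,j$), elementary column operations, which leave the determinant unchanged, bring these into a form involving only finite differences of the sequences $(Q^{(s)}_j)_j$, $(R^{(s)}_j)_j$ (and, in the second case, $(P^{(s)}_j)_j$), based at the index $j-2$. Using the notation $\Delta_{Q,n}^{(s;\,\ell)}$, $\Delta_{R,n}^{(s;\,\ell)}$, $\Delta_{P,n}^{(s;\,\ell)}$ of Section~2, this yields
\begin{align*}
e_{\kappa_n}^{(n)} = \left| \begin{array}{cc}
\Delta_{Q,\kappa_n-2}^{(s;\,0)}(1+\gamma^{(n)}) & \Delta_{Q,\kappa_n-2}^{(s;\,1)}(1+\gamma^{(n)}) \\
\Delta_{R,\kappa_n-2}^{(s;\,0)}(1+\gamma^{(n)}) & \Delta_{R,\kappa_n-2}^{(s;\,1)}(1+\gamma^{(n)})
\end{array} \right|,
\end{align*}
and, for $p_{\kappa_n}^{(n)}(x_i^{(n)})$, the analogous $3\times 3$ determinant whose third row is $\big(\Delta_{P,\kappa_n-2}^{(s;\,0)}(x_i^{(n)}),\,\Delta_{P,\kappa_n-2}^{(s;\,1)}(x_i^{(n)}),\,\Delta_{P,\kappa_n-2}^{(s;\,2)}(x_i^{(n)})\big)$. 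I note that $\kappa_n-2$ is again a sequence of the type considered in Section~2, since $(\kappa_n-2)/n\to\kappa$.

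Next I would conjugate these matrices by diagonal matrices chosen so that every entry acquires a finite limit. For $e_{\kappa_n}^{(n)}$ I would multiply the $2\times 2$ matrix above on the right by $\mathrm{diag}(n^{-s},n^{1-s})$ and on the left by $\mathrm{diag}(1,n^{-2})$, so that the determinant of the resulting matrix equals $n^{-1-2s}e_{\kappa_n}^{(n)}$; its entries converge, the $Q$-row by Propositions~\ref{asymp-Q} and~\ref{asymp-diff-Q} and the $R$-row by the two Propositions of Subsection~2.4 on the asymptotics of $R_{\kappa_n}^{(s)}$ and of its higher differences $\Delta_{R,\kappa_n}^{(s;\,\ell)}$. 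For $p_{\kappa_n}^{(n)}(x_i^{(n)})$ one uses $\mathrm{diag}(n^{-s},n^{1-s},n^{2-s})$ on the right and $\mathrm{diag}(1,n^{-2},1)$ on the left, so that the determinant of the resulting matrix equals $n^{1-3s}p_{\kappa_n}^{(n)}(x_i^{(n)})$, the $P$-row now converging by Proposition~\ref{jacobi-asymp} with $\alpha=s$, $\beta=0$. In both cases, factoring $2^s u^{-\frac{s}{2}}$ out of the limiting $Q$-row, $2^{\frac{2s+3}{2}}u^{-\frac{s+2}{2}}$ out of the limiting $R$-row, and (when it occurs) $2^s z_i^{-\frac{s}{2}}$ out of the limiting $P$-row, turns the limiting determinant into exactly $C_{III}^{(s,u)}(\kappa)$, respectively $A_{III}^{(s,u)}(\kappa,z_i)$; multiplying back the scalar prefactors produces the normalizations $2^{2s+\frac{3}{2}}u^{-1-s}$ and $2^{3s+\frac{3}{2}}u^{-1-s}z_i^{-\frac{s}{2}}$ claimed in the statement. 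The asserted uniformity for $\kappa\in[\varepsilon,1]$ and for $z_i$ ranging over compact simply connected subsets of $\C\setminus\{0\}$ is inherited directly from the corresponding uniform convergences in the asymptotic propositions just cited.

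I do not anticipate a genuine obstacle: the whole argument is bookkeeping. The two points requiring attention are (i) tracking the exponents of $n$ so that the normalizing diagonal matrices reproduce exactly the factors $n^{-1-2s}$ and $n^{1-3s}$, and (ii) verifying that the algebraic simplification of the powers of $2$ and of $u$ matches the stated normalization. The restriction to $\kappa\in[\varepsilon,1]$ is not an artifact of the method; it is forced by the hypotheses of the asymptotic results for $Q_n^{(s)}$, $R_n^{(s)}$ and their higher differences.
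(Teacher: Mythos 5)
Your proposal is correct and follows essentially the same route the paper intends: the paper's own proof of this proposition is just the remark that it is ``similar to that of Proposition \ref{C}'', i.e.\ column operations reducing to finite differences, normalization by diagonal matrices, and the asymptotic propositions of Section 2 for $\Delta_{Q}$, $\Delta_{R}$, $\Delta_{P}$. Your bookkeeping of the powers of $n$, $2$ and $u$ checks out ($n^{-s}\cdot n^{1-s}\cdot n^{-2}=n^{-1-2s}$ and $n^{-s}\cdot n^{1-s}\cdot n^{2-s}\cdot n^{-2}=n^{1-3s}$; $2^{s+\frac{2s+3}{2}}=2^{2s+\frac{3}{2}}$, $2^{2s+\frac{2s+3}{2}}=2^{3s+\frac{3}{2}}$, $u^{-\frac{s}{2}+\frac{-s-2}{2}}=u^{-1-s}$), so nothing further is needed.
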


\begin{proof}
The proof is similar to that of Proposition \ref{C}.
\end{proof}

\begin{prop}
In the regime  \eqref{case3}, we have $$\lim_{n \to \infty} n^{2-3s} \Big\{p_{\kappa_n}^{(n)} (x_i^{(n)})   - p_{\kappa_n-1}^{(n)} (x_i^{(n)})   \Big\} = 2^{3s + \frac{3}{2}} u^{-1-s} z_i^{-\frac{s}{2}} B_{ III}^{(s, u)} (\kappa, z_i) $$ and \begin{align*} &  \lim_{n \to \infty} n^{3- 6s } \Big\{ p_{\kappa_n}^{(n)} (x_1^{(n)})  p_{\kappa_n-1}^{(n)} (x_2^{(n)}) - p_{\kappa_n}^{(n)} (x_2^{(n)})  p_{\kappa_n - 1}^{(n)} (x_1^{(n)})  \Big\} \\   = &   2^{6s + 3} u^{-2 - 2s} (z_1z_2)^{-\frac{ s }{2}}  \Big\{  B_{ III}^{(s, u)} ( \kappa, z_1)  A_{ III}^{(s , u )} ( \kappa, z_2)   -  B_{ III}^{(s, u )} ( \kappa, z_2)  A_{ III}^{(s, u )} ( \kappa, z_1)  \Big\} , \end{align*} where $B_{ III}^{(s, u )} (\kappa , z) = \frac{\partial}{\partial \kappa} A_{ III}^{(s, u)}(\kappa, z)$, i.e., $$   B_{ III}^{(s, u )} (\kappa , z)  = \left | \begin{array}{ccc}K_{s}(\kappa \sqrt{u}) & u^{\frac{1}{2}}K_{s}'( \kappa \sqrt{u}) & u^{\frac{3}{2} } K_{s}^{(3)}(\kappa  \sqrt{u}) \vspace{3mm} \\ L_{s}( \kappa \sqrt{u}) & u^{\frac{1}{2}} L_{s}'( \kappa \sqrt{u}) & u^{\frac{3}{2}} L_{s}^{(3)}(\kappa \sqrt{u})  \vspace{3mm}  \\J_{s} (  \kappa \sqrt{z}) & z^{\frac{1}{2}}J_{s}'(  \kappa \sqrt{z}) & z^{\frac{3}{2}} J_{s}^{(3)} (\kappa \sqrt{z})\end{array}\right | .$$  Moreover, for any $\varepsilon > 0$,  the convergences are uniform as long as $ \kappa \in [\varepsilon, 1 ]$ and $z_i$ ranges compact simple connected subset of $\C\setminus \{0\}$.
\end{prop}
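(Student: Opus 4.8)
The plan is to follow the pattern of Proposition~\ref{B-matrix} and Corollary~\ref{corollary}, with the two-row column vectors used there replaced by three-row ones, reducing everything to the Mehler--Heine-type asymptotics for $\Delta_P$, $\Delta_Q$ and $\Delta_R$ already established; as in those proofs I would carry out the computation for $\kappa_n = n$ and recover the general case from the uniform convergence of those asymptotics. First I would set
$$\boldsymbol{\zeta}_j^{(n)}(t) = \Bigl( Q_j^{(s)}(1+\gamma^{(n)}),\ R_j^{(s)}(1+\gamma^{(n)}),\ P_j^{(s)}(t) \Bigr)^T,$$
so that $p_j^{(n)}(t) = \bigl| \boldsymbol{\zeta}_{j-2}^{(n)}(t)\ \ \boldsymbol{\zeta}_{j-1}^{(n)}(t)\ \ \boldsymbol{\zeta}_{j}^{(n)}(t) \bigr|$. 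Since a cyclic permutation of three columns has signature $+1$, $p_{n-1}^{(n)}(t) = \bigl| \boldsymbol{\zeta}_{n-2}^{(n)}(t)\ \ \boldsymbol{\zeta}_{n-1}^{(n)}(t)\ \ \boldsymbol{\zeta}_{n-3}^{(n)}(t) \bigr|$, whence
$$p_{n}^{(n)}(t) - p_{n-1}^{(n)}(t) = \bigl| \boldsymbol{\zeta}_{n-2}^{(n)}(t)\ \ \boldsymbol{\zeta}_{n-1}^{(n)}(t)\ \ \boldsymbol{\zeta}_{n}^{(n)}(t) - \boldsymbol{\zeta}_{n-3}^{(n)}(t) \bigr|.$$
Then I would run column operations: subtracting (column 1) from (column 2) turns the latter into $\Delta_{\boldsymbol{\zeta},n-2}^{(1)}$, while from $\boldsymbol{\zeta}_{n} - \boldsymbol{\zeta}_{n-3} = \Delta_{\boldsymbol{\zeta},n-3}^{(3)} + 3\boldsymbol{\zeta}_{n-1} - 3\boldsymbol{\zeta}_{n-2}$ the third column is, modulo the span of the first two, equal to $\Delta_{\boldsymbol{\zeta},n-3}^{(3)}$, so it may be replaced by it. This exhibits $p_n^{(n)}(x_i^{(n)}) - p_{n-1}^{(n)}(x_i^{(n)})$ as a $3\times 3$ determinant whose three rows carry $\Delta_{Q,\cdot}^{(s;\ell)}(1+\gamma^{(n)})$, $\Delta_{R,\cdot}^{(s;\ell)}(1+\gamma^{(n)})$ and $\Delta_{P,\cdot}^{(s;\ell)}(x_i^{(n)})$ for $\ell = 0,1,3$, the degree indices ($n-2$, $n-2$, $n-3$) differing only by bounded amounts, which is immaterial in the limit.

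Next I would rescale: multiply this matrix on the right by $\text{diag}(n^{-s}, n^{1-s}, n^{3-s})$ and on the left by $\text{diag}(1, n^{-2}, 1)$, so that by Proposition~\ref{jacobi-asymp} (third row), Proposition~\ref{asymp-diff-Q} (first row) and the asymptotics for $\Delta_{R,\kappa_n}^{(s;\ell)}$ established above (second row) every entry has a finite limit; taking $n\to\infty$ and pulling out the scalar prefactors $2^s u^{-s/2}$, $2^{(2s+3)/2}u^{(-s-2)/2}$, $2^s z_i^{-s/2}$ from the three rows, the determinant converges to $2^{3s+3/2}u^{-1-s}z_i^{-s/2}$ times the determinant with columns $v(\kappa)$, $v'(\kappa)$, $v'''(\kappa)$ for $v(\kappa) = (K_s(\kappa\sqrt u), L_s(\kappa\sqrt u), J_s(\kappa\sqrt{z_i}))^T$ --- which is exactly $B_{III}^{(s,u)}(\kappa,z_i)$; the identity $B_{III}^{(s,u)} = \partial_\kappa A_{III}^{(s,u)}$ is the usual column-differentiation of the determinant $A_{III}^{(s,u)}$, whose columns are $v$, $v'$, $v''$. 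This gives the first limit, and the overall power of $n$ works out to $n^{4-3s}\cdot n^{-2} = n^{2-3s}$ as required. For the bilinear expression I would, exactly as in Corollary~\ref{corollary}, subtract (column 1) from (column 2) in the $2\times 2$ determinant,
$$p_{\kappa_n}^{(n)}(x_1^{(n)})\, p_{\kappa_n-1}^{(n)}(x_2^{(n)}) - p_{\kappa_n}^{(n)}(x_2^{(n)})\, p_{\kappa_n-1}^{(n)}(x_1^{(n)}) = \left|\begin{array}{cc} p_{\kappa_n}^{(n)}(x_1^{(n)}) & p_{\kappa_n-1}^{(n)}(x_1^{(n)}) - p_{\kappa_n}^{(n)}(x_1^{(n)}) \\ p_{\kappa_n}^{(n)}(x_2^{(n)}) & p_{\kappa_n-1}^{(n)}(x_2^{(n)}) - p_{\kappa_n}^{(n)}(x_2^{(n)}) \end{array}\right|,$$
and then insert the asymptotics $p_{\kappa_n}^{(n)}(x_i^{(n)}) \to 2^{3s+3/2}u^{-1-s}z_i^{-s/2}A_{III}^{(s,u)}(\kappa,z_i)$ from Proposition~\ref{matrix-AC-3} together with the first limit (for the second column), with total normalization $n^{3-6s} = n^{(1-3s)+(2-3s)}$; the $2\times 2$ determinant then expands, in the limit, to $B_{III}^{(s,u)}(\kappa,z_1)A_{III}^{(s,u)}(\kappa,z_2) - B_{III}^{(s,u)}(\kappa,z_2)A_{III}^{(s,u)}(\kappa,z_1)$ times $2^{6s+3}u^{-2-2s}(z_1z_2)^{-s/2}$. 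The uniformity in $\kappa\in[\varepsilon,1]$ and in $z_i$ over compact simply connected subsets of $\C\setminus\{0\}$ is inherited verbatim from the three asymptotic propositions invoked.

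The step I expect to be the main obstacle is the bookkeeping rather than anything conceptual: on the one hand making the third-difference reduction precise --- that $\boldsymbol{\zeta}_{\kappa_n} - \boldsymbol{\zeta}_{\kappa_n-3}$ really collapses, modulo the first two columns, to a single $\Delta^{(3)}$-column once the index shift is arranged correctly --- and on the other hand the routine but error-prone tracking of the powers of $n$ and of the scalar prefactors $2^s u^{-s/2}$, $2^{(2s+3)/2}u^{(-s-2)/2}$, $2^s z_i^{-s/2}$ through the three rows, which must combine to give precisely $2^{3s+3/2}u^{-1-s}$ and $2^{6s+3}u^{-2-2s}$. Everything else is a direct application of the Mehler--Heine-type asymptotics of Section~2.
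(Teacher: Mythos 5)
Your proposal is correct and follows essentially the same route as the paper, which disposes of this proposition by saying it is "similar to Proposition \ref{B}": you reduce $p_{\kappa_n}^{(n)}-p_{\kappa_n-1}^{(n)}$ via the cyclic-permutation and third-difference column manipulation to a determinant in $\Delta_Q^{(s;\ell)}$, $\Delta_R^{(s;\ell)}$, $\Delta_P^{(s;\ell)}$ for $\ell=0,1,3$, apply the Mehler--Heine-type asymptotics of Section 2, and then handle the bilinear expression exactly as in Corollary \ref{corollary}; the powers of $n$ and the constants $2^{3s+3/2}u^{-1-s}$ and $2^{6s+3}u^{-2-2s}$ all check out.
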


\begin{proof}
The proof is similar to that of Proposition \ref{B}.
\end{proof}

Now we obtain the following theorem.
\begin{thm}
 In the regime \eqref{case3}, we obtain the scaling limit \begin{align*} &  \Phi_\infty^{(s, u)}  (z_1, z_2) : = \lim_{n \to \infty} \widetilde{\Phi}_n^{(n)} ( z_1, z_2) \\ = & \frac{  A_{ III}^{(s, u)} ( 1, z_1)  B_{ III}^{(s , u )} ( 1, z_2)   -  A_{ III}^{(s, u )} ( 1, z_2)  B_{ III}^{(s, u )} ( 1, z_1) }{2 ( u + z_1)(u+z_2)  \cdot \big[ C_3^{(s,u)} (1) \big]^2 \cdot (z_1- z_2) } . \end{align*}
\end{thm}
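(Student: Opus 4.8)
The plan is to follow verbatim the template already used in the proofs of Theorems \ref{thm-case1} and \ref{thm-case2-1}: start from the Christoffel--Darboux formula for $\widetilde{\Phi}_n^{(n)}$ written in the $(z_1,z_2)$-coordinates, substitute the scaling regime \eqref{case3}, pull out the elementary weight prefactors, and then insert the asymptotics of $e_{\kappa_n}^{(n)}$, of $p_{\kappa_n}^{(n)}(x_i^{(n)})$, and of the antisymmetric combination $p_{\kappa_n}^{(n)}(x_1^{(n)})p_{\kappa_n-1}^{(n)}(x_2^{(n)})-p_{\kappa_n}^{(n)}(x_2^{(n)})p_{\kappa_n-1}^{(n)}(x_1^{(n)})$ furnished by Proposition \ref{matrix-AC-3} and by the proposition immediately following it, all taken with $\kappa_n=n$ (hence $\kappa=1$).

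Concretely, I would first record the elementary identities forced by \eqref{case3}, namely
$$\widehat{w}_s^{(n)}(x_i^{(n)}) = \frac{(1-x_i^{(n)})^s}{(1+\gamma^{(n)}-x_i^{(n)})^2} = (2n^2)^{2-s}\,\frac{z_i^s}{(u+z_i)^2},\qquad x_1^{(n)}-x_2^{(n)} = \frac{z_2-z_1}{2n^2},$$
so that the overall factor $\tfrac1{2n^2}$ in $\widetilde{\Phi}_n^{(n)}$ cancels the factor $x_1^{(n)}-x_2^{(n)}$ in the Christoffel--Darboux denominator, leaving $(z_2-z_1)^{-1}$. Next, using $\widehat{h}_{n-1}^{(n)}=e_{n-1}^{(n)}e_{n}^{(n)}k_{n-1}^{(s)}h_{n-3}^{(s)}/k_{n-3}^{(s)}$ together with $\widehat{k}_n^{(n)}=e_n^{(n)}k_n^{(s)}$ and $\widehat{k}_{n-1}^{(n)}=e_{n-1}^{(n)}k_{n-1}^{(s)}$, the normalizing denominator collapses to
$$\frac{\widehat{h}_{n-1}^{(n)}\,\widehat{k}_n^{(n)}}{\widehat{k}_{n-1}^{(n)}} = \frac{[e_n^{(n)}]^2\,k_n^{(s)}\,h_{n-3}^{(s)}}{k_{n-3}^{(s)}},$$
whose asymptotics I would evaluate via $h_j^{(s)}=2^{s+1}/(2j+s+1)$ and the lemma $k_{\kappa_n+p}^{(s)}/k_{\kappa_n}^{(s)}\to 2^p$. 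Substituting all of this, the powers of $n$ agree on the two sides (one checks that numerator and denominator each carry $n^{1+4s}$), the powers of $z_i$ cancel ($z_i^{s/2}$ against $z_i^{-s/2}$), the powers of $u$ cancel, and the numerical constants collapse, the ratio $2^{5s+5}/2^{5s+6}$ producing the prefactor $\tfrac12$. Finally one rewrites the antisymmetric expression $B_{III}^{(s,u)}(1,z_1)A_{III}^{(s,u)}(1,z_2)-B_{III}^{(s,u)}(1,z_2)A_{III}^{(s,u)}(1,z_1)$ over $(z_2-z_1)$ as the equal quantity $A_{III}^{(s,u)}(1,z_1)B_{III}^{(s,u)}(1,z_2)-A_{III}^{(s,u)}(1,z_2)B_{III}^{(s,u)}(1,z_1)$ over $(z_1-z_2)$, which is precisely the asserted formula (with $C_3^{(s,u)}=C_{III}^{(s,u)}$).

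There is no genuine analytic obstacle here: all the required limits, together with their uniformity for $\kappa\in[\varepsilon,1]$ and for $z_i$ in compact subsets, were already established in Section 2 for the $J$-, $Q$- and $R$-type higher differences and then reassembled into Proposition \ref{matrix-AC-3} and its successor. The only point requiring care is the exponent and constant bookkeeping, in particular verifying that the common power $n^{1+4s}$ indeed appears in both numerator and denominator and not dropping a sign when passing from the Christoffel--Darboux antisymmetrization (which naturally produces $z_2-z_1$) to the form stated in the theorem (which has $z_1-z_2$); everything else is routine. If desired, one could instead, exactly as for \eqref{sigma-2} in Case II, split the Christoffel--Darboux sum at an index $\lfloor n\varepsilon\rfloor$ and pass to an integral representation; but for the stated identity the direct Christoffel--Darboux computation above is the most economical route.
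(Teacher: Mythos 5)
Your proposal is correct and follows exactly the route the paper intends: the paper states this theorem without writing out a proof, expecting precisely the combination of the Christoffel--Darboux formula with Proposition \ref{matrix-AC-3} and the proposition following it, in the same way Theorem \ref{thm-case2-1} is derived in Case II. Your bookkeeping checks out (the common factor $n^{1+4s}$, the cancellation of the $u$- and $z_i$-powers, the ratio $2^{5s+5}/2^{5s+6}=\tfrac12$, and the sign flip from $z_2-z_1$ to $z_1-z_2$).
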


For investigating the integral form of the scaling limit $\Phi_\infty^{(s,u)}$, let us  put $$p_0^{(n)} (t) \equiv 1 \text{ and } \, p_1^{(n)} (t) =1- t - \frac{1}{\widehat{h}_0^{(n)}} \int_{-1}^1(1 -t) \widehat{w}_s^{(n)}(t) d t .$$  The contribution of $p_0^{(n)}$ to the kernel is  \begin{align*} &   \frac{\sqrt{\widehat{w}_s^{(n)}(x_1^{(n)})  \widehat{w}_s^{(n)}(x_2^{(n)}) }}{2n^2} \cdot \frac{p_0^{(n)} (x_1^{(n)}) p_0^{(n)} (x_2^{(n)}) }{\widehat{h}_0^{(n)}}  = \frac{ (z_1z_2)^{\frac{s}{2}}}{(z_1 + u)(z_2+ u)} \cdot \frac{(2n^2)^{1-s}}{\int_{-1}^1 \frac{ (1-t)^s }{ ( 1 + \frac{u}{2n^2} - t)^2} dt }.\end{align*} We note that for $ -1< s < 1$, we have  \begin{align*} & \frac{\int_{-1}^1 \frac{ (1-t)^s }{ ( 1 + \frac{u}{2n^2} - t)^2} dt }{ (2n^2)^{1-s}} = \int_0^{4n^2} \frac{t^s}{ (t + u)^2} d t\\ \xrightarrow{n \to \infty} &  \int_{0}^{\infty} \frac{t^s}{(t + u)^2} dt  =  u^{s-1} B(1 + s, 1-s) = u^{s-1} \Gamma(1  + s) \Gamma(1-s).\end{align*} The contribution of $p_1^{(n)}$ to the kernel is    \begin{align*}  &   \frac{\sqrt{\widehat{w}_s^{(n)}(x_1^{(n)})  \widehat{w}_s^{(n)}(x_2^{(n)}) }}{2n^2} \cdot \frac{p_1^{(n)} (x_1^{(n)}) p_1^{(n)} (x_2^{(n)}) }{\widehat{h}_1^{(n)}}  \\ = &  \frac{ (z_1z_2)^{\frac{s}{2}}}{(z_1 + u)(z_2+ u)}  \cdot \frac{(2n^2)^{1-s} p_1^{(n)} (x_1^{(n)}) p_1^{(n)} (x_2^{(n)})  }{\widehat{h}_1^{(n)}}. \end{align*}  For $ - 1 < s < 0$, we have \begin{align*}    \frac{1}{\widehat{h}_0^{(n)}} \int_{-1}^1(1 -t) \widehat{w}_s^{(n)}(t) d t  = \frac{ \int_{0}^{4n^2} \frac{ t^{s + 1}}{ ( t + u)^2} dt}{2n^2  \int_{0}^{4n^2} \frac{ t^{s }}{ ( t + u)^2} dt }. \end{align*} Hence \begin{align*} p_1^{(n)} ( x_i^{(n)} ) = \frac{1}{2n^2} \left( z_i - \frac{ \int_{0}^{4n^2} \frac{ t^{s + 1}}{ ( t + u)^2} dt}{  \int_{0}^{4n^2} \frac{ t^{s }}{ ( t + u)^2} dt } \right),\end{align*}
 and \begin{align*} \widehat{h}_1^{(n)} =  (2n^2)^{-1-s} \int_{0}^{4n^2} \left( y - \frac{ \int_{0}^{4n^2} \frac{ t^{s + 1}}{ ( t+ u)^2} dt}{  \int_{0}^{4n^2} \frac{ t^{s }}{ ( t + u)^2} dt }\right)^2 dy .\end{align*} It follows that \begin{align*} & \lim_{n \to \infty} \frac{(2n^2)^{1-s} p_1^{(n)} (x_1^{(n)}) p_1^{(n)} (x_2^{(n)})  }{\widehat{h}_1^{(n)}}    =  \frac{ \left( z_1- \frac{ \int_{0}^{\infty} \frac{ t^{s + 1}}{ ( t + u)^2} dt}{  \int_{0}^{\infty} \frac{ t^{s }}{ ( t + u)^2} dt } \right) \left( z_2 - \frac{ \int_{0}^{\infty} \frac{ t^{s + 1}}{ ( t + u)^2} dt}{  \int_{0}^{\infty} \frac{ t^{s }}{ ( t + u)^2} dt } \right)}{ \int_{0}^{\infty} \left( y - \frac{ \int_{0}^{\infty} \frac{ t^{s + 1}}{ ( t+ u)^2} dt}{  \int_{0}^{\infty} \frac{ t^{s }}{ ( t + u)^2} dt }\right)^2    \frac{y^s}{(y + u)^2} dy } \\ & =   \frac{ \left( z_1 + \frac{1+s}{s} u \right) \left( z_2  + \frac{1+s}{s} u \right)}{ \int_{0}^{\infty} \left( y + \frac{1+s}{s} u \right)^2  \frac{y^s}{(y + u)^2} dy }  .\end{align*}

\begin{defn}
For $ -1< s < 1$, define a positive function on $\R^{*}_{+}$ :  $$\mathscr{M}^{(s,u)}_0 (z) : = \frac{1}{\sqrt{u^{s-1} \Gamma( 1 + s) \Gamma)( 1 + s ) }} \cdot \frac{z^{\frac{s}{2}}}{z + u }.$$  For $ -1 < s< 0$, define $$ \mathscr{M}_1^{(s,u) } (z) : =  \frac{ 1 }{ \left[  \int_{0}^{\infty} \left( y + \frac{1+s}{s} u \right)^2  \frac{y^s}{(y + u)^2} dy \right]^{1/2}} \cdot (z + \frac{1+s}{s} u ) \cdot\frac{z^{\frac{s}{2}}}{ z + u },  $$ we  extend the definition of $ \mathscr{M}_1^{(s,u) }$ when $ 0 \le s < 1$ by setting $ \mathscr{M}_1^{(s,u) } \equiv 0$.
\end{defn}

The detail proof of the following proposition is long but routine and similar to the proof of Proposition \ref{one-rank-per}, so we omit its proof here.
\begin{prop}
For $ -1< s < 1$, we have the following representation of $\Phi_\infty^{(s, u)} ( z_1, z_2)$:  \begin{align*}   \Phi_\infty^{(s, u)} (z_1, z_2) = &  \mathscr{M}_0^{(s,u)} (z_1) \mathscr{M}_0^{(s,u)} (z_2)   +  \mathscr{M}_1^{(s,u)} (z_1) \mathscr{M}_1^{(s,u)} (z_2)   \\ & + \frac{1}{2 ( z_1 + u )( z_2 + u )}\int_0^1 \frac{A_{ III}^{(s,u)} (\kappa, z_1) A_{ III}^{(s,u)} (\kappa, z_2) }{ \big[ C_{ III}^{(s,u) } (\kappa) \big]^2} \kappa d \kappa.  \end{align*}
\end{prop}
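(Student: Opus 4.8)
The plan is to follow closely the argument of Proposition~\ref{one-rank-per}, the only structural change being that the rank-one McDonald-type contribution there is replaced here by a rank-two one, coming from the two polynomials $p_0^{(n)}$ and $p_1^{(n)}$ that were singled out in the discussion preceding the statement. Fix $z_1,z_2>0$ and, for $0<\varepsilon<1$, split the Christoffel--Darboux sum at the index $\lfloor n\varepsilon\rfloor$:
$$\widetilde{\Phi}_n^{(n)}(z_1,z_2)=\mathscr{T}_n^{(1)}(\varepsilon)+\mathscr{T}_n^{(2)}(\varepsilon),\qquad \mathscr{T}_n^{(i)}(\varepsilon):=\frac{\sqrt{\widehat{w}_{s}^{(n)}(x_1^{(n)})\,\widehat{w}_{s}^{(n)}(x_2^{(n)})}}{2n^2}\sum_{\ell\in I_i^{(n)}}\frac{p_\ell^{(n)}(x_1^{(n)})p_\ell^{(n)}(x_2^{(n)})}{\widehat{h}_\ell^{(n)}},$$
with $I_1^{(n)}=\{0,\dots,\lfloor n\varepsilon\rfloor-1\}$ and $I_2^{(n)}=\{\lfloor n\varepsilon\rfloor,\dots,n-1\}$. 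For the tail $\mathscr{T}_n^{(2)}(\varepsilon)$ one rewrites the sum as an integral $\int_{[\lfloor n\varepsilon\rfloor/n,1)}T_n(t)\,dt$ exactly as in the proof of Theorem~\ref{integral-form-1}; using $\widehat{h}_j^{(n)}=\tfrac{e_j^{(n)}e_{j+1}^{(n)}k_j^{(s)}h_{j-2}^{(s)}}{k_{j-2}^{(s)}}$, Proposition~\ref{matrix-AC-3}, the lemma on ratios $k_{\kappa_n+p}^{(s)}/k_{\kappa_n}^{(s)}\to2^p$ and formulae~\eqref{leading-norm}, one gets the uniform convergence $T_n(t)\to \tfrac{t\,A_{III}^{(s,u)}(t,z_1)A_{III}^{(s,u)}(t,z_2)}{2(z_1+u)(z_2+u)[C_{III}^{(s,u)}(t)]^2}$ on $[\varepsilon,1]$, hence $\lim_{n\to\infty}\mathscr{T}_n^{(2)}(\varepsilon)=\mathscr{T}_\infty^{(2)}(\varepsilon):=\tfrac{1}{2(z_1+u)(z_2+u)}\int_\varepsilon^1\tfrac{A_{III}^{(s,u)}(\kappa,z_1)A_{III}^{(s,u)}(\kappa,z_2)}{[C_{III}^{(s,u)}(\kappa)]^2}\kappa\,d\kappa$. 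For the head $\mathscr{T}_n^{(1)}(\varepsilon)$ one applies the Christoffel--Darboux identity to the partial sum and invokes the asymptotics of $e_{\kappa_n}^{(n)}$, $p_{\kappa_n}^{(n)}(x_i^{(n)})$ from Proposition~\ref{matrix-AC-3} and of $p_{\kappa_n}^{(n)}(x_i^{(n)})-p_{\kappa_n-1}^{(n)}(x_i^{(n)})$ from the proposition following it, with $\kappa_n=\lfloor n\varepsilon\rfloor$ (so $\kappa=\varepsilon$); in complete analogy with formula~\eqref{partial-sum} this yields
$$\lim_{n\to\infty}\mathscr{T}_n^{(1)}(\varepsilon)=\mathscr{T}_\infty^{(1)}(\varepsilon):=\varepsilon\cdot\frac{A_{III}^{(s,u)}(\varepsilon,z_1)B_{III}^{(s,u)}(\varepsilon,z_2)-A_{III}^{(s,u)}(\varepsilon,z_2)B_{III}^{(s,u)}(\varepsilon,z_1)}{2(z_1+u)(z_2+u)\,[C_{III}^{(s,u)}(\varepsilon)]^2\,(z_1-z_2)}.$$
Consequently $\Phi_\infty^{(s,u)}(z_1,z_2)=\mathscr{T}_\infty^{(1)}(\varepsilon)+\mathscr{T}_\infty^{(2)}(\varepsilon)$ for every $\varepsilon>0$.

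Next I would establish that the limiting integrand is integrable on $(0,1)$, so that $\varepsilon\to0$ may be taken. When $z_1=z_2=z$ every summand of $\widetilde{\Phi}_n^{(n)}(z,z)$ is nonnegative (as $\widehat{h}_\ell^{(n)}>0$ and $\widehat{w}_{s}^{(n)}>0$ on $(-1,1)$), so $\mathscr{T}_\infty^{(2)}(\varepsilon)\big|_{z_1=z_2=z}\le\Phi_\infty^{(s,u)}(z,z)<\infty$ for all $\varepsilon$, which forces $\int_0^1\tfrac{\kappa\,[A_{III}^{(s,u)}(\kappa,z)]^2}{2(z+u)^2[C_{III}^{(s,u)}(\kappa)]^2}\,d\kappa<\infty$; the Cauchy--Schwarz bound
$$\Big|\frac{\kappa\,A_{III}^{(s,u)}(\kappa,z_1)A_{III}^{(s,u)}(\kappa,z_2)}{2(z_1+u)(z_2+u)[C_{III}^{(s,u)}(\kappa)]^2}\Big|\le\Big(\frac{\kappa\,[A_{III}^{(s,u)}(\kappa,z_1)]^2}{2(z_1+u)^2[C_{III}^{(s,u)}(\kappa)]^2}\Big)^{1/2}\Big(\frac{\kappa\,[A_{III}^{(s,u)}(\kappa,z_2)]^2}{2(z_2+u)^2[C_{III}^{(s,u)}(\kappa)]^2}\Big)^{1/2}$$
then gives integrability in general. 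Hence $\lim_{\varepsilon\to0}\mathscr{T}_\infty^{(2)}(\varepsilon)$ exists, therefore so does $\mathscr{T}_\infty^{(1)}(0):=\lim_{\varepsilon\to0}\mathscr{T}_\infty^{(1)}(\varepsilon)$, and
$$\Phi_\infty^{(s,u)}(z_1,z_2)=\mathscr{T}_\infty^{(1)}(0)+\frac{1}{2(z_1+u)(z_2+u)}\int_0^1\frac{A_{III}^{(s,u)}(\kappa,z_1)A_{III}^{(s,u)}(\kappa,z_2)}{[C_{III}^{(s,u)}(\kappa)]^2}\,\kappa\,d\kappa.$$
It therefore remains to identify $\mathscr{T}_\infty^{(1)}(0)=\mathscr{M}_0^{(s,u)}(z_1)\mathscr{M}_0^{(s,u)}(z_2)+\mathscr{M}_1^{(s,u)}(z_1)\mathscr{M}_1^{(s,u)}(z_2)$.

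For this last step I would compute the $\varepsilon\to0$ asymptotics of $C_{III}^{(s,u)}(\varepsilon)$, $A_{III}^{(s,u)}(\varepsilon,z_i)$ and $B_{III}^{(s,u)}(\varepsilon,z_i)=\partial_\varepsilon A_{III}^{(s,u)}(\varepsilon,z_i)$ directly from their defining $2\times2$ and $3\times3$ determinants, substituting the McDonald series $K_s(x)=\tfrac{\pi}{2\sin(s\pi)}(I_{-s}(x)-I_s(x))$ (whence the two-branch expansion of $L_s=sK_s-\tfrac{x}{2}(K_{s-1}+K_{s+1})$) and $J_s(x)=\sum_\nu\tfrac{(-1)^\nu(x/2)^{2\nu+s}}{\nu!\,\Gamma(\nu+s+1)}$. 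Pulling the common powers of $\varepsilon$ out of each row, as in the derivation of~\eqref{iterate1}--\eqref{iterate3}, rewrites $\mathscr{T}_\infty^{(1)}(\varepsilon)$ as an iterated divided difference of ratios of entire functions near $0$; its leading term is rank two, and matching the constants against the integrals $\int_0^\infty\tfrac{t^s}{(t+u)^2}\,dt=u^{s-1}\Gamma(1+s)\Gamma(1-s)$ and $\int_0^\infty\big(y+\tfrac{1+s}{s}u\big)^2\tfrac{y^s}{(y+u)^2}\,dy$ entering $\mathscr{M}_0^{(s,u)},\mathscr{M}_1^{(s,u)}$ gives the claim (for $0\le s<1$ the $\varepsilon^{s}$-branch of $L_s$ suppresses one of the two leading terms, consistently with $\mathscr{M}_1^{(s,u)}\equiv0$; for $-1<s<0$ both survive). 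The main obstacle is precisely this bookkeeping: unlike in Proposition~\ref{one-rank-per}, here \emph{both} $K_s$ and $L_s$ carry competing branches $\varepsilon^{\pm s}$, so the $3\times3$ Wronskian-type determinant $A_{III}^{(s,u)}$ has several minors vanishing to the same order, and one must track the cancellations carefully to see that exactly a rank-two expression with the stated constants remains; the rest is a routine repetition of the arguments already used in Cases~I and~II.
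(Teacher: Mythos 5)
Your proposal follows exactly the route the paper intends: the paper omits the proof of this proposition, stating only that it is ``long but routine and similar to the proof of Proposition~\ref{one-rank-per}'', and your outline is precisely that adaptation --- the same splitting of the Christoffel--Darboux sum at $\lfloor n\varepsilon\rfloor$, the same integral form for the tail via uniform convergence on $[\varepsilon,1]$, the same Cauchy--Schwarz/positivity argument for integrability, and the same identification of the head term through the $\varepsilon\to 0^{+}$ asymptotics of $A_{III}^{(s,u)}$, $B_{III}^{(s,u)}$, $C_{III}^{(s,u)}$, now yielding a rank-two rather than rank-one contribution. The final asymptotic bookkeeping you defer is exactly the part the paper itself declines to write out (it only records the limiting contributions of $p_0^{(n)}$ and $p_1^{(n)}$ in the discussion preceding the statement), so your attempt matches the paper's approach and is no less complete than its own treatment.
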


\def\cprime{$'$}


\begin{thebibliography}{1}

\bibitem{Ab}
Milton Abramowitz and Irene~A. Stegun.
\newblock {\em Handbook of mathematical functions with formulas, graphs, and
  mathematical tables}, volume~55 of {\em National Bureau of Standards Applied
  Mathematics Series}.
\newblock For sale by the Superintendent of Documents, U.S. Government Printing
  Office, Washington, D.C., 1964.

\bibitem{BO-infinite-matrices}
Alexei Borodin and Grigori Olshanski.
\newblock Infinite random matrices and ergodic measures.
\newblock {\em Comm. Math. Phys.}, 223(1):87--123, 2001.

\bibitem{BNR}P. Bourgade, A. Nikeghbali, A. Rouault,  Ewens Measures on Compact Groups and Hypergeometric Kernels, S{\'e}minaire de Probabilit{\'e}s XLIII, Springer
Lecture Notes in Mathematics 2011, pp 351-377.

\bibitem{Buf-umn} A.I. Bufetov, Multiplicative functionals of determinantal processes,
Uspekhi Mat. Nauk 67 (2012), no. 1 (403), 177--178;
translation in Russian Math. Surveys 67 (2012), no. 1, 181--182.

\bibitem{Buf-CIRM} A. Bufetov, Infinite determinantal measures, Electronic Research Announcements in the Mathematical Sciences, 20 (2013), pp. 8 -- 20.

\bibitem{Bufetov_inf-deter}
A.~I. Bufetov.
\newblock Infinite determinantal measures and the ergodic decomposition of
  infinite pickrell measures.
\newblock {\em arXiv:1312.3161, December 2013}.

\bibitem{Hua} Hua Loo Keng,  Harmonic Analysis of Functions of Several Complex Variables in the Classical Domains,
Science Press Peking 1958,
Russian translation Moscow  Izd.Inostr. lit., 1959, English translation  (from the Russian) AMS 1963.

\bibitem{Neretin} Yu.A. Neretin, Hua-type integrals over unitary groups and over projective limits of unitary groups.
Duke Math. J. 114 (2002), no. 2, 239--266.

\bibitem{Olsh0}G. Olshanski, The quasi-invariance property for the Gamma kernel determinantal measure.
Adv. Math. 226 (2011), no. 3, 2305--2350.


\bibitem{Olsh-Vershik}
Grigori Olshanski and Anatoli Vershik.
\newblock Ergodic unitarily invariant measures on the space of infinite
  {H}ermitian matrices.
\newblock In {\em Contemporary mathematical physics}, volume 175 of {\em Amer.
  Math. Soc. Transl. Ser. 2}, pages 137--175. Amer. Math. Soc., Providence, RI,
  1996.

\bibitem{Pickrell90}
Doug Pickrell.
\newblock Separable representations for automorphism groups of infinite
  symmetric spaces.
\newblock {\em J. Funct. Anal.}, 90(1):1--26, 1990.

\bibitem{ShirTaka1}T. Shirai, Y. Takahashi, Random point fields associated with certain Fredholm determinants.
I. Fermion, Poisson and boson point processes. J. Funct. Anal. 205 (2003), no. 2, 414--463.

\bibitem{ShirTaka2}T. Shirai, Y. Takahashi, Random point fields associated with certain Fredholm determinants.
II. Fermion shifts and their ergodic and Gibbs properties. Ann. Probab. 31 (2003), no. 3, 1533--1564.

\bibitem{Soshnikov}A. Soshnikov, Determinantal random point fields.
(Russian) Uspekhi Mat. Nauk 55 (2000), no. 5(335), 107--160;
translation in Russian Math. Surveys 55 (2000), no. 5, 923--975.

\bibitem{Szego-OP}
G{\'a}bor Szeg{\H{o}}.
\newblock {\em Orthogonal polynomials}.
\newblock American Mathematical Society, Providence, R.I., fourth edition,
  1975.
\newblock American Mathematical Society, Colloquium Publications, Vol. XXIII.

\bibitem{Tracy-Widom94}
Craig~A. Tracy and Harold Widom.
\newblock Level spacing distributions and the {B}essel kernel.
\newblock {\em Comm. Math. Phys.}, 161(2):289--309, 1994.

\bibitem{Vershik}A.M. Vershik, A description of invariant measures for actions of certain infinite-dimensional groups.
(Russian) Dokl. Akad. Nauk SSSR 218 (1974), 749--752.




\end{thebibliography}


\end{document}